\pdfoutput=1

\documentclass[12pt]{article}

\usepackage{amsmath,amssymb,amsthm}

\usepackage[english]{babel}
\usepackage[T1]{fontenc}
\usepackage[utf8]{inputenc}

\usepackage{subcaption}
\usepackage{tikz}
\usepackage{tikz-cd}
\usepackage{calc}
\usetikzlibrary{decorations.markings,calc,shapes,arrows.meta,positioning,shapes.multipart}
\usetikzlibrary{decorations.pathmorphing}

\usepackage[
  backend=biber,
  style=numeric,
  sorting=none,
  maxnames=50,
  giveninits=true
]{biblatex}
\usepackage{hyperref}
\hypersetup{
  colorlinks=true,
  linkcolor=blue,
  citecolor=blue,
  urlcolor=blue
}

\newcommand{\bb}{\mathbb}
\newcommand{\R}{\bb R}

\newcommand{\Z}{\bb Z}

\newcommand{\floor}[1]{\left\lfloor#1\right\rfloor}
\newcommand{\ceil}[1]{\left\lceil#1\right\rceil}

\newcommand{\sm}{\setminus}

\renewcommand{\Im}{\operatorname{Im}}

\renewcommand{\epsilon}{\varepsilon}

\renewcommand{\int}{\operatorname{int}}
\newcommand{\C}{\mathcal{C}}

\newcommand{\inn}{\mathrm{in}}
\newcommand{\out}{\mathrm{out}}

\newcommand{\sd}{\bigtriangleup}

\newcommand{\RB}{\tikz[baseline]{\draw[->] (0.3,0) -- (0,0.3);}}
\newcommand{\BR}{\tikz[baseline]{\draw[->] (0.3,0.3) -- (0.6,0);}}
\newcommand{\RY}{\tikz[baseline]{\draw[->] (0.3,0) -- (0,0);}}
\newcommand{\BY}{\tikz[baseline]{\draw[->] (0,0.3) -- (0,0);}}
\newcommand{\YR}{\tikz[baseline]{\draw[->] (0,0) -- (0.3,0);}}
\newcommand{\YB}{\tikz[baseline]{\draw[->] (0,0) -- (0,0.3);}}
\newcommand{\YRRB}{\tikz[baseline]{\draw[->] (0,0) -- (0.3,0); \draw[->] (0.35,0) -- (0,0.35);}}
\newcommand{\BRRY}{\tikz[baseline]{\draw[->] (0.0,0.3) -- (0.35,0.05);\draw[->] (0.3,0) -- (0,0);}}
\tikzstyle{pointv}=[circle, draw, fill=black, minimum size=2pt, inner sep=1.5pt]

\newtheorem{prop}{Proposition}
\newtheorem{ass}{Assumption}
\newtheorem{obs}[prop]{Observation}
\newtheorem{remark}[prop]{Remark}
\newtheorem{lemma}[prop]{Lemma}

\newtheorem{thm}[prop]{Theorem}

\newtheorem{conj}[prop]{Conjecture}
\newtheorem{claim}[prop]{Claim}

\newtheorem*{lemma*}{Lemma}
\newtheorem*{thm*}{Theorem}

\newenvironment{cpf}
{\begin{trivlist}\item[] {\em Proof of claim.}}
{\hspace*{\stretch{1}} $\diamond$ \end{trivlist}}

\title{The red-blue-yellow matching problem}

\author{
Manuel Aprile, Marco Di Summa\\[1ex]
\small Universit\`a degli Studi di Padova, Dipartimento di Matematica ``Tullio Levi-Civita''\\
\small Via Trieste 63, 35121 Padova, Italy\\[0.5ex]
\small \texttt{manuel.aprile@unipd.it}, \texttt{disumma@math.unipd.it}
}

\date{}

\begin{document}

\maketitle

\begin{abstract}
We consider the red-blue-yellow matching problem: given two natural numbers $k_R$, $k_B$ and a graph $G$ whose edges are colored red, blue or yellow, the goal is to find a matching of $G$ that contains exactly $k_R$ red edges and exactly $k_B$ blue edges, and is of maximum cardinality subject to these constraints. This is a natural generalization of the well known red-blue matching problem, whose complexity status is unknown: although a randomized polynomial-time algorithm exists, a deterministic algorithm has remained elusive for nearly four decades. 
The best known deterministic approach to the red-blue matching problem, due to Yuster (2012), gives an additive approximation. In this paper, we show a similar result for the red-blue-yellow matching problem, giving a polynomial-time deterministic algorithm that, under natural assumptions, finds a matching satisfying the color requirements almost exactly and has cardinality within 3 of the optimal solution. Our algorithm is a mix of classic linear programming techniques and {\em ad hoc }existence results on restricted classes of graphs such as paths and cycles. As a key ingredient, we prove a curious topological property of plane curves, which is a strengthened version of a result by Grandoni and Zenklusen (2010) in the related context of budgeted matchings.
\end{abstract}

\section{Introduction} \label{sec:intro}

Research on matching problems has been related to computational complexity since the celebrated work of Edmonds \cite{edmonds1965paths}, where the very concept of a polynomial-time algorithm is introduced. A prominent variant of the classical matching problem is the \emph{red-blue matching problem}, also known as \emph{exact matching problem}: given a graph $G$, where each edge is colored red or blue, and an integer $k\ge0$, it asks whether there is a perfect matching of $G$ containing exactly $k$ red edges. First introduced by Papadimitriou and Yannakakis \cite{papadimitriou1982complexity}, this problem gained popularity after Mulmuley, Vazirani, and Vazirani \cite{mulmuley1987matching} gave a randomized (polynomial-time) algorithm for it. (Throughout the paper, all algorithms are meant to be polynomial-time, so we will omit to specify this.) The question of whether the problem admits a {\em deterministic} algorithm has been open for almost four decades, despite a large amount of work \cite{yuster2012almost, gurjar2017exact, el2023exact}.
 This issue is most important due to the popular conjecture that $\mathcal{P}=\mathcal{RP}$, i.e., that all problems that admit a randomized algorithm also admit a deterministic algorithm \cite{kabanets2003derandomizing}.
 
 Much research has been done on variations and extensions of the problem (see for instance \cite{el2023exact, durr2023approximation}), with implications for other areas, such as integer programming: indeed, exact matching can be modeled as an integer program with a coefficient matrix that is \lq\lq nearly'' totally unimodular (see \cite{aprile2025integer}), and randomized algorithms for the weighted version of the exact matching problem, given in \cite{camerini1992random}, lie at the core of algorithms for certain classes of integer programs with bounded subdeterminants \cite{nagele2024congruency, nagele2024advances}. 

While a deterministic algorithm for the red-blue matching problem is yet to be found, a line of research focused on obtaining approximate results: in particular, Yuster \cite{yuster2012almost} considered the problem of finding matchings that have exactly $k$ red edges and are \lq\lq almost'' maximum, 
meaning that they have at most one edge less than the maximum. Other works \cite{grandoni2010approximation, chekuri2011multi, grandoni2014new, doron2023eptas} studied a weighted version of the problem, where, instead of colors, we are given multiple weight functions defined on the edges of the graph, and the solution must comply with the respective budgets. In this case, exact constraints are replaced by inequality constraints, leading to PTASs. 
Our work lies between these two approaches, as we consider the following \emph{RBY (red-blue-yellow) matching problem}: 
\begin{quote}
Given a graph where each edge is colored red, blue, or yellow, and given nonnegative integer numbers $k_R, k_B$, what is the maximum cardinality of a matching with exactly $k_R$ red edges and $k_B$ blue edges?
\end{quote}

Clearly, the above problem generalizes the red-blue matching problem. 
Even more, already the question of whether there exists \emph{any} matching that exactly satisfies the above color requirements generalizes the red-blue matching problem. Hence, while the RBY matching problem can be solved via the randomized algorithm for exact matching (see \cite{camerini1992random}), a deterministic exact approach is for now out of reach. We will however give an algorithm to find a matching that almost satisfies the color requirements (losing at most one blue edge) and is large with respect to the optimum value of the natural linear-programming relaxation of the problem. In particular, we will obtain an {\em additive} approximation guarantee, thus making our result incomparable with the aforementioned PTASs on the (multi-)budgeted version of matching (see, for instance, \cite{chekuri2011multi, grandoni2014new}): indeed, while those support more general constraints than our color requirements, the approximation guarantee is a {\em multiplicative} one.
At the end of this section, we detail further connections between those papers and ours, describing in particular how they can be used to obtain weaker versions of our result.

Before we further describe our contribution, we state the precise result of Yuster \cite{yuster2012almost} which is our starting point. In the following, adopting standard notation from graph theory, we denote by $\alpha'(G)$ the maximum size of a matching in a given graph $G$, without color restrictions.  Moreover, to simplify the exposition, we say \lq\lq red-blue(-yellow) colored graph'' to indicate a graph where each edge is colored red or blue (or yellow). 

\begin{thm}\label{thm:yuster}\cite[Theorem 1.1]{yuster2012almost}
    There is a deterministic algorithm that, given a red-blue colored graph $G$ and an integer $k\ge0$, either correctly asserts that there is no matching of size $\alpha'(G)$ with exactly $k$ red edges, or returns a matching of size at least $\alpha'(G)-1$ with exactly $k$ red edges.
\end{thm}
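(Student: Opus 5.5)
We prove Theorem~\ref{thm:yuster} with a polyhedral, path-following argument on the matching polytope. First compute $\nu=\alpha'(G)$, and then a maximum matching $M_{\min}$ minimizing the number of red edges and a maximum matching $M_{\max}$ maximizing it. Both are maximum-weight matchings with weights of the form $N\pm[e \text{ red}]$ for large $N$, so Edmonds' algorithm finds them deterministically. Let $r_{\min}$ and $r_{\max}$ be their red counts. If $k\notin[r_{\min},r_{\max}]$, we correctly assert that no maximum matching has exactly $k$ red edges. Otherwise we will construct a matching of size at least $\nu-1$ with exactly $k$ red edges.

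The key step is to move from $M_{\min}$ to $M_{\max}$ through maximum matchings, changing the red count gradually. The symmetric difference $M_{\min}\sd M_{\max}$ consists of alternating cycles and even alternating paths, since both matchings are maximum. Swapping components one at a time produces a sequence of maximum matchings whose red counts go from $r_{\min}$ to $r_{\max}$. We stop at the last matching $M$ with red count at most $k$, so that the next swap, along a component $C$, overshoots to a red count above $k$. Thus it suffices to handle a single alternating cycle or even path $C$, with $M\cap C$ having $a<k'$ red edges and $M'\cap C$ having $b>k'$ red edges, where $k'=k-|M\sm C|_{\text{red}}$. On $C$ we must find a matching of size at least $|M\cap C|-1$ with exactly $k'$ red edges that uses only vertices of $C$; it is then automatically disjoint from $M\sm C$.

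On an even alternating path or cycle $C=e_1e_2\dots e_{2m}$, with $M$-edges $e_1,e_3,\dots$, consider the windows: for each $j$, take the $M'$-edges $e_2,e_4,\dots,e_{2j}$ together with the $M$-edges $e_{2j+3},e_{2j+5},\dots$. This is a matching of size $m-1$, one edge is lost at the junction. As $j$ increases by one, the red count changes by at most $1$ in absolute value, because one edge of $M'$ is added and one edge of $M$ is removed. We also need the two extremes to bracket $k'$. Using the size-$(m-1)$ versions of the extremes costs at most one red edge, so we get a discrete intermediate-value argument. For a cycle we may additionally rotate the starting point, i.e.\ choose the cyclic position of the junction, which gives more flexibility.

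The main obstacle is the boundary case. The size-$(m-1)$ windows at the extremes have red counts in $\{a-1,a\}$ and $\{b-1,b\}$, so $k'$ between $a$ and $b-1$ is covered; the only gap would be $k'=b$ itself, but that case is impossible since $k'<b$, and if $k'=a$ we simply use $M$. Adjacent windows differ by at most one, so some window has exactly $k'$ red edges, giving total size at least $\nu-1$. The remaining care is to verify that the windows really are matchings. On a path they are, since the edges $e_{2j}$ and $e_{2j+3}$ are separated by the removed edges $e_{2j+1}$ and $e_{2j+2}$. On a cycle we must check that the wrap-around does not create a conflict, which holds because we drop one $M$-edge at the junction and, on a cycle, must also account for the closing edge by dropping appropriately. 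If needed, we handle cycles by first deleting one $M$-edge to turn the cycle into a path.
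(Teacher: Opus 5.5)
Your proof is correct, but it reaches the one-component situation by a different route than the paper. The paper, sketching Yuster's argument in a strengthened form, solves the LP $\max\{x(E): x\in P_M(G),\ x(R)=k\}$ and takes an optimal solution in the relative interior of an edge of $P_M(G)$. It then uses the fact that two adjacent vertices of the matching polytope differ in a single alternating path or even cycle, and applies Proposition~\ref{prop:yuster} to it.

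You instead do the following:
\begin{itemize}
\item compute the min-red and max-red maximum matchings with Edmonds' weighted algorithm;
\item walk from one to the other by swapping components of $M_{\min}\sd M_{\max}$, which are all even paths or cycles since both matchings are maximum;
\item isolate the single component on which the red count jumps over $k$.
\end{itemize}
Your sliding windows $W_0,\dots,W_{m-1}$ then give a self-contained proof of Proposition~\ref{prop:yuster} and of its even-path analogue.

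Your route is more elementary. It needs no separation over $P_M(G)$, no polytope-adjacency argument, and no odd paths to handle. The ``assert'' branch is justified simply because every maximum matching has between $r_{\min}$ and $r_{\max}$ red edges.

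The LP route buys something else. It gives the stronger guarantee $\lceil\alpha^*\rceil-1$, measured against the best matching with exactly $k$ red edges rather than against $\alpha'(G)$. It is also the template the paper later generalizes to three colors via low-dimensional faces of $P_M(G)$.

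One small clean-up: on a cycle, no extra care is needed at the wrap-around. For $0\le j\le m-1$ the window $W_j$ contains neither $e_1$ nor $e_{2m}$. So $e_{2m-1}$ and $e_2$ are separated by two omitted edges, exactly as $e_{2j}$ and $e_{2j+3}$ are at the junction, and $W_j$ leaves exactly the two vertices $v_0$ and $v_{2j+1}$ exposed. Your suggested fallback of deleting an $M$-edge to turn the cycle into a path is therefore unnecessary. It would also produce an odd path and change the bookkeeping of the extreme windows.
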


We remark that this algorithm might return a matching of size $\alpha'(G)-1$ with $k$ red edges even when there is one of size $\alpha'(G)$.

The above result can be strengthened as follows, with the same proof as in \cite{yuster2012almost}. Let us denote by $P_M(G)$ the matching polytope of $G$, i.e., the convex hull of all incidence vectors of matchings of $G$. Throughout the paper, we denote by $E$ the set of edges of $G$, and we write $x(F):=\sum_{e\in F} x_e$ for any $F\subseteq E$. Consider the linear program (LP) $\max \{x(E): x\in P_M(G),\, x(R)=k\}$, where  $R$ is the set of red edges of $G$. Assume that the LP is feasible (otherwise there is of course no matching with exactly $k$ red edges) and has optimal value $\alpha^*$. Then, there is a deterministic algorithm that finds a matching with $k$ red edges and cardinality at least $\lceil\alpha^*\rceil-1$. Let us sketch this algorithm. First, one solves the aforementioned LP (this can be done in polynomial time thanks to the results of Edmonds \cite{edmonds1965maximum}, Padberg and Rao \cite{PadbergRao}, and the equivalence of separation and optimization). If the optimal solution is integer, the algorithm returns the corresponding matching of $G$. Otherwise, the hyperplane $x(R)=k$ cuts an edge of $P_M(G)$ and thus one can obtain an optimal solution lying in the relative interior of an edge of $P_M(G)$, which is therefore a convex combination of (the incidence vectors of) two adjacent matchings of $G$. It can be seen that one of these matchings has at least $k$ red edges and the other has at most $k$ red edges, and their symmetric difference is a path or an even cycle.  By proving a simple result on the almost perfect matchings of paths and even cycles, one finds that it is always possible to select edges from the symmetric difference in order to obtain a matching with exactly $k$ red edges, losing at most one in size. Let us focus on the case where the symmetric difference is a cycle, hence both matchings have the same size $\alpha^*$, as the other cases are similar. The following holds:

\begin{prop}\cite[Lemma 2.2]{yuster2012almost}\label{prop:yuster}
    Let $G$ be a red-blue colored even cycle, and let $M_0$ (resp., $M_1$) be the perfect matching of all even (resp., odd) edges of $G$. Then, for any integer $k$ between $|M_0\cap R|$ and $|M_1\cap R|$ there exists an almost maximum matching of $G$ with exactly $k$ red edges.
\end{prop}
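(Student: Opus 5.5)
A discrete intermediate-value argument over a family of almost maximum matchings interpolating between $M_0$ and $M_1$ should do it. Let the cycle have vertices $v_0,\dots,v_{2n-1}$ and edges $e_i=v_iv_{i+1}$ (indices mod $2n$). Then $M_0=\{e_0,e_2,\dots,e_{2n-2}\}$ and $M_1=\{e_1,e_3,\dots,e_{2n-1}\}$. An almost maximum matching here is one of size $n-1$; this is the only reading that makes the statement nontrivial, since the perfect matchings are just $M_0$ and $M_1$. For $j=0,1,\dots,n$ I would define
\[
N_j := \{e_1,e_3,\dots,e_{2j-3}\}\cup\{e_{2j},e_{2j+2},\dots,e_{2n-2}\},
\]
with the convention that $N_0$ is $M_0$ minus one edge and $N_n$ is $M_1$ minus one edge. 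Concretely, $N_j$ uses odd edges on the arc $v_0\dots v_{2j-2}$ and even edges from $v_{2j}$ onward, leaving $v_{2j-1}$ and $v_{2n-1}$ uncovered (indices adjusted appropriately at the extremes). Each $N_j$ is a matching of size $n-1$.

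Passing from $N_j$ to $N_{j+1}$ removes the single even edge $e_{2j}$ and adds the single odd edge $e_{2j-1}$. So $|N_j\cap R|$ changes by at most $1$ at each step.

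For the endpoints, write $r_0=|M_0\cap R|$ and $r_1=|M_1\cap R|$, and assume without loss of generality $r_0\le r_1$. Then $|N_0\cap R|\in\{r_0-1,r_0\}$ and $|N_n\cap R|\in\{r_1-1,r_1\}$.

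The main obstacle is this endpoint slack. If $k=r_1$ but $N_n$ drops a red edge, the walk may never reach $k$; the same issue arises at $k=r_0$. I would resolve it in two ways:
\begin{itemize}
\item For $N_0$, choose which edge of $M_0$ to drop, i.e.\ rotate the labelling so that the uncovered vertex pair is incident to a blue edge of $M_0$ when one exists. If $M_0$ is entirely red, then every removal loses a red edge.
\item More simply, handle the extreme values $k=r_0$ and $k=r_1$ directly. Since $k\ge 0$ is attained by the perfect matching $M_0$ or $M_1$, which is certainly a matching of maximum size, that suffices if "almost maximum" allows size at least $n-1$.
\end{itemize}

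With the endpoints handled, I interpolate only for $r_0<k<r_1$. Here $|N_0\cap R|\le r_0<k$ and $|N_n\cap R|\ge r_1-1\ge k$. Since the sequence $|N_j\cap R|$ moves in unit steps, a discrete intermediate value argument gives some $j$ with $|N_j\cap R|=k$.

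If instead the intended meaning requires size exactly $n-1$, the extreme cases need a separate argument. For $k=r_1$, choose to drop a blue edge of $M_1$ if one exists. Otherwise $M_1$ is all red, and then the rotated family $N_j$ starting at a suitable vertex still works, because the walk ends at $r_1-1$ and arrives from above via a non-red swap. This case analysis is where I expect the only real care to be needed.
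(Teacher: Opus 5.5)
Your argument is correct, with ``almost maximum'' read as the paper does: at most one edge less than the maximum. It is not, however, how the paper obtains this fact. The paper only cites Yuster here. Its own derivation of the red-blue case is part~2 of Theorem~\ref{thm:cycle_choice}, which goes through Lemma~\ref{lem:balanced}. There, two consecutive edges of the same color are contracted, the statement is applied inductively to the shorter cycle with the requirement for that color decreased by one, and one of the contracted edges is added back.

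This reduces everything to a properly colored cycle, which is red/blue alternating. On such a cycle, Observation~\ref{obs:noY} writes the matching down explicitly: the first $k_R$ edges of $M_0$ plus the blue edges not incident with them. That matching has exactly your sliding form, an arc of $M_0$-edges followed by an arc of $M_1$-edges.

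In your family, a step $N_j\to N_{j+1}$ leaves the red count unchanged exactly when $e_{2j-1}$ and $e_{2j}$ have the same color. The paper's contraction removes such monochromatic pairs, after which the count moves monotonically by one and no intermediate-value argument is needed. Your route is shorter, avoids induction, and is directly algorithmic. The paper's route is chosen because contracting balanced paths is what survives with three colors, where a one-coordinate intermediate-value argument cannot control red and blue simultaneously (cf.\ Figure~\ref{fig:cycle}).

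There are two small corrections.

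First, for $1\le j\le n$ the vertices left exposed by $N_j$ are $v_0$ and $v_{2j-1}$. They are not $v_{2j-1}$ and $v_{2n-1}$, since $v_{2n-1}$ is covered by $e_{2n-2}$ when $j<n$. Also, the literal formula at $j=0$ gives $M_0$ itself. Simply start the walk at $N_1=M_0\setminus\{e_0\}$, which already has at most $r_0<k$ red edges.

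Second, and more importantly, drop your last paragraph. If $M_1$ is all red and $k=r_1$, then $k=n$, and no matching with $n-1$ edges has $n$ red edges, so no rotated family can work. The exact-size variant is simply false whenever $k=n$. For $k\le n-1$ it does hold, by dropping a non-red edge of $M_0$ or $M_1$ at the endpoints. Your treatment of $k\in\{r_0,r_1\}$ via the perfect matchings themselves is the right one for the statement as posed.
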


Hence, by applying the above result to the symmetric difference of the two matchings with an appropriate choice of $k$, one obtains the desired matching.

In this paper, we give a version of Yuster's results in the more general setting of red-blue-yellow colored graphs, where we have a requirement on the number of red edges that we want to satisfy exactly, and a requirement on the number of blue edges that we want to satisfy almost exactly. The following is our main result. 

\begin{thm}\label{thm:main}
There is a deterministic algorithm that,
    given a red-blue-yellow colored graph $G$ (thus, with edge set $E=R\mathbin{\dot\cup} B\mathbin{\dot\cup} Y$) and integers $k_R, k_B\ge0$ such that the LP \[\max\{x(E): x\in P_M(G),\, x(R)=k_R,\, x(B)=k_B\}\] 
     is feasible and has optimal value $\alpha^*$, finds a matching $M$ of $G$ with $|M|\geq \lfloor \alpha^*\rfloor-3$, exactly $k_R$ red edges, and $k_B$ or $k_B-1$ blue edges. 
\end{thm}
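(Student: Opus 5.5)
We mirror Yuster's LP-based approach, now with two side constraints. First we solve the LP $\max\{x(E): x\in P_M(G),\, x(R)=k_R,\, x(B)=k_B\}$ by the ellipsoid method (Edmonds, Padberg--Rao), and we take an optimal vertex $x^*$. The feasible region is $P_M(G)$ intersected with an affine subspace of codimension $2$, so $x^*$ lies in the relative interior of a face $F$ of $P_M(G)$ of dimension at most $2$. If $\dim F=0$, then $x^*$ is a matching and we are done. If $\dim F=1$, then $x^*$ is a convex combination of two adjacent matchings whose symmetric difference is a path or an even cycle, and we reduce to a Yuster-type argument. If $\dim F=2$, then $x^*$ lies in a $2$-dimensional face, which is a polygon whose vertices $M_1,\dots,M_t$ are matchings, with consecutive ones adjacent. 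By Carathéodory's theorem within $F$, $x^*$ is a convex combination of three vertices of $F$. All of these vertices have size at least $\lfloor\alpha^*\rfloor-1$ if we choose them sensibly; otherwise we use the fact that the face is optimal-direction-bounded.

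The key structural point concerns the union of the symmetric differences. A $2$-dimensional face of $P_M(G)$ is determined by a matching $M$ and a subgraph $H$ such that, in $M\triangle M'$ for the vertices $M'$ of $F$, at most two "alternating components" (paths or cycles) $C_1,C_2$ are involved. Concretely, $F$ is $\{M\triangle S : S\subseteq\{C_1,C_2\}\}$, a square, or else a more complicated polygon when $C_1,C_2$ overlap, as in Grandoni--Zenklusen. We then restrict attention to the graph $H=\bigcup_i (M_1\triangle M_i)$, which we expect to be a union of at most two paths/cycles or a small "theta-like" structure. Outside $H$ all vertices of $F$ agree, so it suffices to find, inside $H$, a matching with the right number of red and blue edges that loses at most $3$ in size relative to $x^*(E(H))$.

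For this we need a two-parameter analogue of Proposition~\ref{prop:yuster}. As we continuously deform one matching into another along $H$, which means sliding a "window" along each alternating path or cycle and swapping the edges in that window, we get a plane curve $\gamma$ in $\Z^2$-coordinates $(\#\text{red},\#\text{blue})$ with steps of bounded size. Near-matchings along such deformations lose at most one edge per component. The target point $(k_R,k_B)$ lies in the convex hull of the vertex values, which is the projection of $F$. We then invoke the topological lemma announced in the abstract, a strengthening of Grandoni--Zenklusen: a closed lattice curve with unit-type steps whose winding number around $(k_R,k_B)$ is nonzero must pass through a point with first coordinate $k_R$ and second coordinate in $\{k_B,k_B-1\}$. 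Applying this with a two-parameter family (one window per component), we find a configuration with exactly $k_R$ red edges and $k_B$ or $k_B-1$ blue edges. Each component costs at most one edge, and the floor costs one more, which gives the bound $\lfloor\alpha^*\rfloor-3$.

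The main obstacle is twofold. The first part is constructing the right one- or two-parameter family of near-matchings on $H$, so that every intermediate set is a matching with size loss bounded by a constant, so that the color counts change by at most one per step, and so that the boundary of the family traces the polygon $F$ and hence winds around the target. The second part is proving the topological lemma with the asymmetric "exact red, blue off by at most one" conclusion. I would first try the case where $H$ consists of two vertex-disjoint components, where the family is a product of two Yuster interpolations. Only after that would I treat overlapping components by case analysis on paths versus cycles.
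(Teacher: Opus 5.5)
Your starting point matches the paper: a basic optimal solution lies in the relative interior of a face $F$ of $P_M(G)$ of dimension at most two. The steps that carry the weight after that, however, do not hold.

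\textbf{The topological lemma is false.} The boundary of the lattice square $[k_R-2,k_R+2]\times[k_B-2,k_B+2]$, traversed with unit steps, winds once around $(k_R,k_B)$. Yet its only points with first coordinate $k_R$ are $(k_R,k_B\pm2)$. A closed curve around the target only guarantees that it meets the line $x_R=k_R$ somewhere below and somewhere above $(k_R,k_B)$.

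\textbf{The structure of two-dimensional faces is wrong.} Since $P_M(G)$ is a 0/1 polytope, a 2-face is a triangle or a parallelogram (Observation~\ref{obs:parallelogram}), never a ``more complicated polygon''. In the triangle case, $H=(M_1\sd M_2)\cup(M_1\sd M_3)$ consists of two alternating paths or cycles that may overlap arbitrarily. For instance, if the three matchings are disjoint perfect matchings, $H$ is a cubic graph, not a small theta-like structure. You give no construction of a two-parameter family of near-matchings with bounded loss on such an $H$. Consequently, the accounting ``one edge per component plus one for the floor'' has nothing behind it.

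\textbf{The one-dimensional case is not Yuster-type.} With two color constraints, a single sliding window gives a one-parameter curve that generally misses $(k_R,k_B)$ (cf.\ Figure~\ref{fig:cycle}). You need windows $\{2v,\dots,2u-1\}$ with both endpoints free and imbalance exactly $q$; their existence comes from Lemma~\ref{lem:curves}. For the final theorem you also need the \emph{exact} statement on cycles containing a yellow edge (Theorem~\ref{thm:cycle_choice}), which is where the Crossing Lemma~\ref{lem:crossing} is required.

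\textbf{The missing idea: never handle more than two matchings at once.} Let $\pi(x)=(x(R),x(B))$. The line $x_R=k_R$ meets the boundary of $\pi(F)$ at $(k_R,k'_B)$ and $(k_R,k''_B)$, with $k'_B<k_B<k''_B$. Each of these points lies on an edge of $\pi(F)$ coming from two adjacent matchings.
\begin{enumerate}
\item On each of these edges, the fractional cycle result (Lemma~\ref{lem:cycle_frac}) gives matchings $M'$ and $M''$ with exactly $k_R$ red edges, losing at most one edge each. Their blue counts satisfy $|M'\cap B|\le\ceil{k'_B}\le k_B\le\ceil{k''_B}-1\le|M''\cap B|$.
\item Now $(k_R,k_B)$ lies on the segment between $p_{M'}$ and $p_{M''}$. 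Lemma~\ref{lem:general} handles the arbitrary union $M'\cup M''$: it glues the components into a single cycle with yellow dummy edges and applies Theorem~\ref{thm:cycle_choice}, losing at most two more edges. Because that theorem is exact when a yellow edge is present, removing the one extra non-red edge keeps the blue deficit at one.
\item For a triangle, $\floor{\alpha^*}=|M^-|$, which yields the bound $\floor{\alpha^*}-3$.
\item For a parallelogram, one additionally shows that $M_1\sd M_2$ and $M_2\sd M_3$ are node-disjoint. They are odd paths whenever $\floor{\alpha^*}=|M^-|+1$. In that case $M'\sd M''$ contains no cycle, and Lemma~\ref{lem:general} loses only one edge.
\end{enumerate}
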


Similarly to the algorithm of Theorem \ref{thm:yuster} described above, our algorithm starts by solving the LP in the statement. Again, there is an optimal solution that lies in the relative interior of a low-dimensional face of $P_M(G)$, hence we obtain a small number of matchings to deal with. The union (or symmetric difference) of these matchings can be in general hard to study. Indeed, if, for example, the matchings are three and disjoint, 
one obtains a 3-regular graph that is uniquely 3-edge colorable, a mysterious class of graphs that has connections to the circuit double cover conjecture and other open problems (see \cite{zhang1995hamiltonian, zhang2001strong}). To the best of our knowledge, nothing is known on the existence of large matchings in such graphs, or in general colored 3-regular graphs.

However, we will be able to restrict ourselves to pairs of matchings, and in particular we will only need to study almost perfect matchings of a (red-blue-yellow colored) cycle or path. We will focus on the case of the cycle, as it easily implies the other cases. Proving an extension of Proposition \ref{prop:yuster} (whose proof is rather simple) to red-blue-yellow colored cycles will be a crucial step in the proof of Theorem \ref{thm:main}, requiring substantial work.

\begin{figure}
    \centering
    \begin{tikzpicture}
        \node[circle, fill= black, inner sep = 1.5] (1) at (0,0) {};
        \node[circle, fill= black, inner sep = 1.5] (2) at (0,1) {};
        \node[circle, fill= black, inner sep = 1.5] (3) at (1,2) {};
        \node[circle, fill= black, inner sep = 1.5] (4) at (2,2) {};
        \node[circle, fill= black, inner sep = 1.5] (5) at (3,1) {};
        \node[circle, fill= black, inner sep = 1.5] (6) at (3,0) {};
         \node[circle, fill= black, inner sep = 1.5] (7) at (2,-1) {};
          \node[circle, fill= black, inner sep = 1.5] (8) at (1,-1) {};

          \draw[red, very thick] (1) -- (2);
          \draw[blue, very thick] (3) -- (2);
          \draw[yellow, very thick] (3) -- (4);
          \draw[blue, very thick] (4) -- (5);
          \draw[red, very thick] (5) -- (6);
          \draw[blue, very thick] (6) -- (7);
          \draw[yellow, very thick] (7) -- (8);
          \draw[blue, very thick] (1) -- (8);
    \end{tikzpicture}
    \caption{In the cycle above, a matching with two red edges cannot have any blue edges.}
    \label{fig:cycle}
\end{figure}
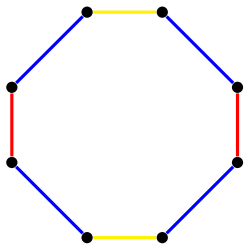

In order to appreciate the difference, we first note that, when we have requirements on two colors, a straightforward extension of Proposition \ref{prop:yuster} (i.e., the existence of an almost maximum matching with $k_R$ red edges and $k_B$ blue edges) fails.
Indeed, consider a red-blue-yellow colored cycle  and let $k_R$ be between $|M_0\cap R|$ and $|M_1\cap R|$, and similarly, let $k_B$ be between $|M_0\cap B|$ and $|M_1\cap B|$. Is there always an almost maximum matching of the cycle with exactly $k_R$ red edges and $k_B$ blue edges? The answer is negative: 
for instance, in the cycle in Figure \ref{fig:cycle}, one perfect matching (say, the one containing the even edges) has four blue edges and the other has two red edges (and no blue edge), but there is no matching with two red edges and one blue edge. This leads to the question of which further conditions could be necessary to guarantee the existence of an almost maximum matching. 
Indeed, the colored cycles arising as solutions to our LP are not general, but enjoy a convexity property relating the colors of odd edges, even edges, and the numbers $k_R, k_B$. A similar property is exploited in \cite{grandoni2010approximation} in the context of budgeted matchings: there, a nice lemma about plane curves 
ensures the existence of matchings of large cardinality whose weight does not exceed a given budget. We state the lemma here, as it will be the starting point of our result on cycles.

Let $f:  [0,\tau]\rightarrow \R^2$ be a continuous piecewise-linear map, hence $\Im(f)$ is a piecewise-linear (i.e., polygonal) plane curve. (In the paper, we will often identify a map $f$ as above with the corresponding curve $\Im(f)$.)
We define $f^\infty:\R\to\R$ as the extension of $f$ by periodicity: that is, for every $t\in\R$,
\begin{equation}\label{eq:f^inf}
f^\infty(t)=k(f(\tau)-f(0))+f(r),\:\mbox{if $t=k\tau+r$ with $k\in\Z$ and $0\le r<\tau$}.
\end{equation}
Furthermore, given $q\in\R^2$, we write $f+q$ to denote the map $t\mapsto f(t)+q$ for $t\in[0,\tau]$.
The following is a restatement of Lemma 1 in \cite{grandoni2010approximation}, which easily follows from its proof.

\begin{lemma}\cite[Lemma 1]{grandoni2010approximation}\label{lem:curves}
     Let $f: [0,\tau]\rightarrow \R^2$ be a continuous piecewise-linear map, and let $q$ be a point in the open segment between $f(0)$ and $f(\tau)$.  Then the curves $f^\infty$ and $f+q-f(0)$ intersect: in particular, there exist $u\in [0,2\tau]$ and $v\in [0,\tau]$ with $v< u < v+\tau$ such that $f^\infty(u) = f(v)+q-f(0)$. 
\end{lemma}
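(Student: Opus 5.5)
The plan is to forget the parametrization at first, find \emph{some} intersection by a separation argument in a strip, and then recover the ordering $v<u<v+\tau$ from where the intersecting pieces sit in the parameter line.

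\textbf{Normalization.} Write $d:=f(\tau)-f(0)$ and $F:=f^\infty$. Since $q$ lies in the open segment between $f(0)$ and $f(\tau)$, we have $q-f(0)=\lambda d$ with $\lambda\in(0,1)$. By \eqref{eq:f^inf}, $F(t+\tau)=F(t)+d$ for all $t$. So it suffices to find $s,u\in\R$ with $s<u<s+\tau$ and $F(u)=F(s)+\lambda d$. Indeed, writing $s=k\tau+v$ with $v\in[0,\tau)$ and subtracting $kd$ from both sides gives $F(u-k\tau)=f(v)+\lambda d$ with $v<u-k\tau<v+\tau$, hence $u-k\tau\in[0,2\tau]$. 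Rotate coordinates so that $d$ is horizontal and pointing right, and let $h(t)$ be the vertical coordinate of $F(t)$. The function $h$ is $\tau$-periodic and continuous. Choose $t_0\in[0,\tau]$ with $h(t_0)=\max h$, and $t_1\in(t_0,t_0+\tau)$ with $h(t_1)=\min h$; if $h$ is constant, the curve is a horizontal segment and the claim is immediate.

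\textbf{Existence of an intersection.} The image of $F$ is a connected, piecewise-linear set contained in the closed horizontal strip $S=\{\min h\le y\le \max h\}$ and unbounded in both horizontal directions. By a standard piecewise-linear Jordan-type argument, $\Im F$ separates the open half-plane above $S$ from the one below it in $\R^2\sm\Im F$. Now look at the arc $\Gamma=\{F(s)+\lambda d : s\in[t_0,t_1]\}$. Its starting point $F(t_0)+\lambda d$ is at maximal height, so the upward vertical ray from it misses $\Im F$; the only exception is if the point itself lies on $\Im F$, in which case we are already done. Hence the start of $\Gamma$ lies in the top component. Symmetrically, its endpoint $F(t_1)+\lambda d$ lies in the bottom component. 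By connectedness, $\Gamma$ must meet $\Im F$, so there are $s\in[t_0,t_1]$ and $u\in\R$ with $F(u)=F(s)+\lambda d$.

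\textbf{Ordering (main obstacle).} The intersection just found need not have $u\in(s,s+\tau)$. Because $F(u)=F(s)+\lambda d$ is equivalent to $F(u-\tau)=F(s)+(\lambda-1)d$, an intersection with the wrong ordering is really an intersection for a shift outside $(0,1)$. I would handle this by refining the separation argument. Instead of the whole of $\Im F$, use only the fundamental arc $A=F([t_0,t_0+\tau])$. Close it up with the vertical rays going upward from its two endpoints $F(t_0)$ and $F(t_0)+d$; both endpoints are at maximal height, so these rays meet $\Im F$ only at their base. The resulting curve, together with the portion of the strip below it, bounds a region $\Omega$. Then $\Gamma$ starts on the closed top side of $A$, between the two rays, since $0<\lambda<1$, and it ends below $\Omega$. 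So $\Gamma$ must cross $A$ itself, not merely some translate $A+kd$, which gives $u\in[t_0,t_0+\tau]$. Finally, one compares the two parametrizations. For the first exit time $s^*$ of $\Gamma$ from $\Omega$, the crossing point $F(u)$ must lie on the part of $A$ with $u>s^*$. The reason is that the initial piece $F([t_0,s^*])+\lambda d$ stays in $\overline\Omega$, which is impossible if it ran into the piece of $A$ preceding parameter $s^*$, given the $\lambda d$ shift to the right. This is the step I expect to require the most care, and I would first try to make it rigorous by also tracking which side of the vertical rays through $F(s)$ and $F(s)+d$ the point $F(u)$ lies on. With $s^*<u\le t_0+\tau\le s^*+\tau$, and equality excluded because $\lambda\ne 1$, the normalization step then yields the required $u\in[0,2\tau]$, $v\in[0,\tau]$ with $v<u<v+\tau$.
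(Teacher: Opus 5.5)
Your normalization and the existence step are fine. The gap is the ordering step, and that step is the entire content of the lemma: an intersection of $\Gamma$ with $\Im F$ by itself says nothing about $u-s$. The claim you rely on there is false. It is not true that at the first exit time $s^*$ of $\Gamma$ from $\Omega$ the hit point is $F(u)$ with $u>s^*$.

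Take $f$ to be the polygonal path through $(0,0),(0,-1),(3,-1),(3,-5),(1,-5),(1,-3),(-4,-3),(-4,-8),(10,-8),(10,0)$, so $d=(10,0)$, and take $\lambda=\frac12$. Then $t_0=0$ is a highest point, $A=\Im f$ is a simple arc, and $\Gamma=A+(5,0)$ begins $(5,0),(5,-1),(8,-1),(8,-5),(6,-5),(6,-3),(1,-3),\dots$.
\begin{itemize}
\item Up to the point $(3,-3)$, $\Gamma$ stays in $\{3<x_1<10,\ x_2>-8\}$. This set contains no point of $A$, so it lies in $\Omega$.
\item At $(3,-3)$, $\Gamma$ crosses the segment of $A$ from $(3,-1)$ to $(3,-5)$ transversally and enters the pocket $\{1<x_1<3,\ -5<x_2<-1\}$.
\item This pocket is joined through $\{x_1<3,\ -3<x_2<-1\}$ to the region left of the left ray, so it lies outside $\overline{\Omega}$.
\end{itemize}
Hence $s^*$ is the first exit time (and also the first time $\Gamma$ meets $A$), with $F(s^*)=(-2,-3)$ on the sixth segment. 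The only $u$ with $F(u)=(3,-3)$ lies on the third segment, so $u<s^*$. Correctly ordered pairs do exist here (e.g.\ $F(s)=(-4,-8)$, $F(u)=(1,-8)$), but the first exit does not produce one. Your heuristic that the $\lambda d$ shift to the right keeps $\Gamma$ away from the earlier part of $A$ simply fails: $\Gamma$ reaches an early part of $A$ after $A$ itself has moved far to the left.

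You can supply the missing step with a degree argument in the parameter plane; your choice of $t_0$ at maximal height is exactly what makes it work. Translate so that $F(t_0)=0$; then $F_2\le 0$ everywhere and $F(t_0+\tau)=d$. On the triangle $T=\{(s,u): t_0\le s\le u\le t_0+\tau\}$ put $\Phi(s,u)=F(u)-F(s)-\lambda d$.
\begin{itemize}
\item On the leg $s=t_0$, $\Phi=F(u)-\lambda d$ lies in the closed lower half-plane and runs from $-\lambda d$ to $(1-\lambda)d$.
\item On the leg $u=t_0+\tau$, $\Phi=(1-\lambda)d-F(s)$ lies in the closed upper half-plane and runs from $(1-\lambda)d$ back to $-\lambda d$.
\item On the diagonal, $\Phi\equiv-\lambda d$.
\end{itemize}
If $\Phi$ does not vanish on $\partial T$, the loop $\Phi|_{\partial T}$ winds once around the origin, so $\Phi$ has a zero inside $T$. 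A zero on $\partial T$ can only lie in the relative interior of a leg, since at the vertices $\Phi\in\{-\lambda d,(1-\lambda)d\}$, which is nonzero because $0<\lambda<1$. Every such zero satisfies $s<u<s+\tau$, which is exactly what your normalization step requires. The region $\Omega$, the strip-separation step, piecewise linearity and injectivity all become unnecessary.

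For comparison, the paper does not prove this lemma; it quotes it from Grandoni--Zenklusen. The same extremal-height normalization (with a minimum instead of a maximum) appears as Claim~\ref{cl:assum} in its proof of the Crossing Lemma. That proof then proceeds via bubbles and Claim~\ref{cl:circle}, under the additional assumption that $f^\infty$ is injective.
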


Given two continuous maps $f,g:[0,\tau]\to\R^2$, a pair $(u,v)\in\R\times[0,\tau]$ such that $f^\infty(u) = g(v)$ will be called an \emph{intersecting pair} for $(f^\infty,g)$. Thus, the above result states the existence of an intersecting pair for $(f^\infty,f+q-f(0))$ with the additional property $v<u<v+\tau$.

When reading the proof of the above lemma in \cite{grandoni2010approximation}, which is based on Jordan curve theorem, it seems reasonable to believe that not only must the two curves intersect, but they must also \emph{cross}.  The notion of two plane curves crossing appears to be intuitive, but is actually quite subtle and requires some carefulness (and assumptions), especially when one wants to define {\em where} a crossing happens (see Section \ref{sec:crossing} for a brief discussion about this). For our purposes, besides assuming that $f$ and $g$ are  continuous and piecewise linear, we can also suppose that $f^\infty$ is injective. Then, as we will verify in Section \ref{sec:crossing} again using Jordan curve theorem, $f^\infty$ divides $\R^2$ into two topologically connected components (meaning that $\R^2\sm\Im(f^\infty)$ consists of two connected components). We say that a pair $(u,v)\in\R\times(0,\tau)$ is a {\em crossing pair} for $(f^\infty,g)$ if:
\begin{itemize}
\item $f^\infty(u)=g(v)$;
\item  there exists $s\in(0,v]$ and $\epsilon>0$ such that
\begin{itemize}
    \item $g([s,v])\subseteq f^\infty(\R)$,  \item $g((s-\epsilon,s))$, $g((v,v+\epsilon))$ are contained in opposite \lq\lq sides'' of $f^\infty$ (i.e., in different components of $\R^2\setminus\Im(f^\infty)$). 
\end{itemize}
\end{itemize}
When the above conditions are met, we say that $g$ crosses $f^\infty$. More generally, if $(u,v)$ is a crossing pair and $I\subseteq\R,J\subseteq[0,\tau]$ are real intervals such that $u$ is in the interior of $I$ and $v$ is in the interior of $J$, we say that $g|_J$ crosses $f^\infty|_I$.

Note that $g$ is allowed to overlap with $f^\infty$ at more than one point: when this happens, i.e., when $s<v$, we say that the crossing is {\em overlapping}; otherwise, when $s=v$, the crossing is {\em simple}.
We also remark that a crossing pair $(u,v)$ for $(f^\infty,g)$ must satisfy $0<v<\tau$ by definition. Thus, throughout the paper, we will often omit to specify this condition.


In Section \ref{sec:crossing} we will show the following:

\begin{lemma}[Crossing Lemma]\label{lem:crossing}
Let $f: [0,\tau]\rightarrow \R^2$ be a continuous piecewise-linear map such that $f^\infty$ is injective, and let $q$ be a point on the segment between $f(0)$ and $f(\tau)$ such that $q\not\in\Im(f)$. 
Then there exists a crossing pair $(u,v)$ for $(f^\infty,f+q-f(0))$ such that $v<u<v+\tau$.
\end{lemma}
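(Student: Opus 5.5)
We normalize so that $f(0)=0$ and write $d:=f(\tau)\neq 0$ (if $d=0$ the segment is a point and $q=f(0)\in\Im(f)$, which is excluded). Then $q=\lambda d$ with $\lambda\in(0,1)$, because $q\notin\Im(f)$ rules out $q=f(0)$ and $q=f(\tau)$. Set $g:=f+q$. The curve $C:=\Im(f^\infty)$ is invariant under translation by $d$. Since $f^\infty$ is injective and piecewise linear, and $f^\infty(t)\to\pm\infty$ along the direction $d$ as $t\to\pm\infty$, $C$ is a properly embedded line. We first establish (via Jordan's theorem applied to $C$ closed up at infinity, as announced for Section~\ref{sec:crossing}) that $\R^2\sm C$ has exactly two components $U^+,U^-$. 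Each is invariant under translation by $d$. Moreover, points of the form $p+\mu n$, with $n\perp d$ and $\mu\to\pm\infty$, lie in $U^{\pm}$ respectively, since $C$ lies in a strip around the line $\R d$: it is the image of the compact set $f([0,\tau])$ under translations by multiples of $d$.

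Next we set up a closed loop to exploit a sign change. Consider the closed curve $\Gamma$ obtained by concatenating $g|_{[0,\tau]}$ (from $q$ to $q+d$) with the translate $f^\infty|_{[\tau,2\tau]}$ traversed backwards... Rather than pursue this, we use the simpler invariant, which is the side of $C$ on which each endpoint of $g$ lies. Here $g(0)=q$ and $g(\tau)=q+d$, and these endpoints are not on $C$: the point $q+kd\in C$ would imply $q\in C$ by invariance, and hence $q=f^\infty(t)$ for some $t$. The key claim is then that $g(0)$ and $g(\tau)$ lie on the same side, while $g$ restricted near $0$ and near $\tau$ must pass through the region \lq\lq between'' consecutive periods. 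The crossing is located as follows. Parametrize $f^\infty$ and consider the two arcs $A=f^\infty([0,\tau])$ and $A+d$. The point $q$ lies on the chord from $0$ to $d$. We consider the closed curve $\Delta$ formed by $f|_{[0,\tau]}$ together with the segment $[d,0]$; this is the object used in Lemma~\ref{lem:curves}. Following the proof of Lemma~\ref{lem:curves} in \cite{grandoni2010approximation}, $g$ must meet $C$ at a parameter $f^\infty(u)=g(v)$ with $v<u<v+\tau$. Our aim is to upgrade this intersection to a genuine crossing.

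The upgrade proceeds as follows. Define $\sigma(v)\in\{+,-,0\}$ as the side of $C$ containing $g(v)$. Since $g$ and $C$ are piecewise linear, the set $\{v: g(v)\in C\}$ is a finite union of closed intervals and points. We show that $g$ leaves $q$ into one side and arrives at $q+d$ from the other, in an appropriate local sense, or more robustly that the sides at $g(0)$ and $g(\tau)$ differ once $C$ is replaced by a suitably chosen portion relevant to the windowing constraint $v<u<v+\tau$. Concretely, for the translated curve $C$ the sides of $q$ and $q+d$ coincide, so we instead use the curve $C_v$ obtained by splicing: the arc of $f^\infty$ from $f^\infty(v)$ to $f^\infty(v+\tau)$ is a translate of one period. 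We would argue, as in the Jordan-curve argument of Lemma~\ref{lem:curves}, that the closed curve formed by $g|_{[0,\tau]}$, the arc of $C$ from $q+d$ back, and so on, has odd winding. From this, a parity count of sign changes of $\sigma$ along $[0,\tau]$ yields an odd number of transitions between $U^+$ and $U^-$. Each maximal overlap interval $[s,v]$ at which $\sigma$ changes sign gives a crossing pair, simple if $s=v$ and overlapping otherwise.

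The main obstacle is ensuring that the crossing pair obtained satisfies the window $v<u<v+\tau$, not merely that some crossing exists. Our plan is to define $h(v):=u(v)-v$ on overlap points, which is well defined by injectivity of $f^\infty$. Along an overlap interval $h$ is constant, because the overlap runs parallel in parameter as both curves are translates of the same $f$. We then show that crossings with $h\le 0$ or $h\ge\tau$ come in cancelling pairs. The argument uses the translation by $d$ together with the fact that $g-f=q$ is a constant vector: this shows that $g$ crosses the half-curve $f^\infty|_{(-\infty,v]}$ and the half-curve $f^\infty|_{[v+\tau,\infty)}$ an even number of times in total. Consequently the odd parity must be carried by crossings in the window. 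Making this parity bookkeeping rigorous with overlapping segments is the delicate part. We expect to handle it by a small generic perturbation of $q$ along the chord, which makes all crossings simple, followed by a limiting argument. In the limit, simple crossings may degenerate into overlapping ones, which is exactly why the definition allows $s<v$.
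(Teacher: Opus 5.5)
Your setup, with the two $d$-invariant components $U^\pm$ of $\R^2\sm C$ for $C=\Im(f^\infty)$, is sound. The step that is supposed to produce a crossing, however, fails, and it contradicts your own observation.

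Since $U^\pm$ are invariant under translation by $d$, the endpoints $g(0)=q$ and $g(\tau)=q+d$ lie in the \emph{same} component. So along $g|_{[0,\tau]}$ the number of passages between $U^+$ and $U^-$ is even, and a priori could be zero. Splicing in a single period arc $C_v$ cannot change this. An odd intersection count with such an arc only yields points where $g$ meets the arc, not points where $g$ passes from one side of the whole curve $f^\infty$ to the other, which is what a crossing pair requires. Moreover, the closed curve through \lq\lq the arc of $C$ from $q+d$'' is not defined, since $q+d\notin C$.

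For the same reason your window bookkeeping runs the wrong way. If crossings with $u\le v$ or $u\ge v+\tau$ really cancelled in pairs, you would conclude that the number of in-window crossings is \emph{even}, which is consistent with there being none. Nor is the in-window count odd in general: in the left example of Figure~\ref{fig:crossing}, $g$ meets $f^\infty$ exactly twice, both times transversally, both inside the window, and in opposite directions. So the existence of a crossing, let alone one in the window, is not a parity or intersection-number phenomenon, and the perturbation/limit step cannot supply it. In addition, moving $q$ along the chord does not remove overlaps when $f$ has a segment parallel to $d$, and a limit of simple crossings can degenerate into a mere touching, which is not a crossing pair. Also, $u-v$ need not be constant along an overlap: the orientations may be opposite, and even when they agree the overlapping pieces are generally different segments of $f$ with different speeds.

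What is missing is an argument that uses $q\notin\Im(f)$ essentially, to force $g$ out of the component containing $q$. The paper proceeds as follows:
\begin{itemize}
\item It rotates so that the chord lies on the $x_1$-axis and $f^\infty$ lies in $\{x_2\ge0\}$.
\item It proves that the points where $f^\infty$ touches the axis occur along it in parameter order. This is also what is needed to get $t\in[0,\tau]$ from $q=f^\infty(t)$, a step your argument for $q\notin C$ skips.
\item It cuts the region below $f^\infty$ into \lq\lq bubbles'' between consecutive touching points.
\item A $g$ that never crosses can pass between bubbles only through their extremes, so it would contain all touching points of a full period.
\item The touching set of $g$ is, modulo $a=f_1(\tau)$, the translate of that of $f$ by $q_1$. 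A measure argument on the circle $\R/a\Z$ then forces $q\in\Im(f)$, a contradiction.
\end{itemize}
The window condition is then obtained not by counting but by an extremal choice: a crossing pair with $u<\tau$ maximal and one with $u'>\tau$ minimal. This is followed by Jordan-curve arguments separating the cases $v'<v$ and $v<v'$.
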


\begin{figure}[h]
    \centering     \centering \begin{tikzpicture}[scale=0.5]

    \draw[step=1, lightgray, very thin] (0,0) grid (9,7);

    \draw[->] (0,0) -- (9.5,0) node[right] {};
    \draw[->] (0,0) -- (0,7.5) node[above] {};

\draw[very thick] (1,1) -- (2,3) -- (5,3);

\draw[ thick, gray] (5,3) -- (6,5)-- (8,5);
\draw[ thick, gray, dotted] (9,5)-- (8,5);
\draw[ thick, gray, dotted] (0,1)-- (1,1);

\node[circle,fill=black, inner sep=1] (a) at  (1,1){};
\node[circle,fill=black, inner sep=1] (b) at  (5,3){};

\node[circle,fill=red, inner sep=1.2] (q) at  (3,2){};
\node[right] at (q) {$q$};

\draw[red, thick] (q) -- (4,4) -- (7,4);

\newcommand{\radius}{0.2}
\draw (3.5-\radius,3-\radius) rectangle (3.5+\radius,3+\radius);

\draw (5.5-\radius,4-\radius) rectangle (5.5+\radius,4+\radius);

\end{tikzpicture}\hspace{1.2cm}
      \begin{tikzpicture}[scale=0.5]

    \draw[step=1, lightgray, very thin] (0,0) grid (9,7);

    \draw[->] (0,0) -- (9.5,0) node[right] {};
    \draw[->] (0,0) -- (0,7.5) node[above] {};

\begin{scope}[shift={(-1,0)}]

\draw[very thick] (2,3) -- (3,5) -- (5,1)-- (6,3);

\draw[ thick, gray] (6,3) -- (7,5)-- (9,1) -- (10,3);
\draw[ thick, gray] (1,1)-- (2,3);

\node[circle,fill=black, inner sep=1] (a) at  (2,3){};
\node[circle,fill=black, inner sep=1] (b) at  (6,3){};

\node[circle,fill=red, inner sep=1.2] (q) at  (5,3){};
\node[right] at (q) {$q$};

\draw[red, thick] (q) -- (6,5) -- (8,1)-- (9,3);
\end{scope}

\newcommand{\radius}{0.2}
\draw (5.5-\radius,4-\radius) rectangle (5.5+\radius,4+\radius);

\draw (7.5-\radius,2-\radius) rectangle (7.5+\radius,2+\radius);
\end{tikzpicture}
   
    \caption{Examples of crossing pairs, with $f$ shown in black, its extension $f^\infty$ in gray, and $f+q-f(0)$ in red. Squares indicate crossings.
    }
    \label{fig:crossing}
\end{figure}
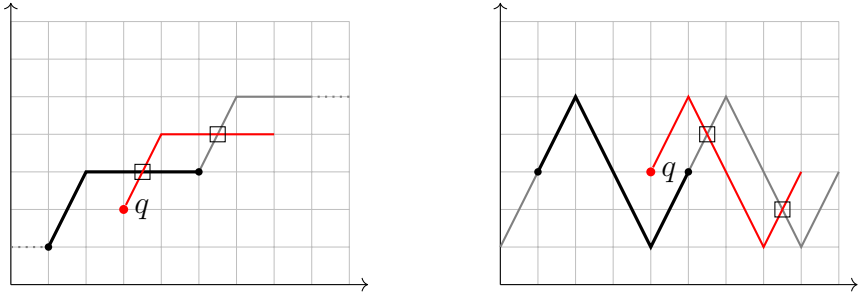

Figure \ref{fig:crossing} shows some examples of crossing pairs satisfying the conditions of Lemma \ref{lem:crossing}.

Lemma \ref{lem:crossing} will be crucial in showing the extension of Proposition \ref{prop:yuster} (see Theorem \ref{thm:cycle_choice} in Section \ref{sec:cycle}) that we need to prove Theorem \ref{thm:main}. We also believe that this lemma could be used in other contexts to prove the existence of combinatorial objects with special properties.

We conclude this section with two remarks on how existing results, and in particular Lemma \ref{lem:curves}, can lead to weaker forms of our Theorem \ref{thm:main}. 
\begin{itemize}
    \item The first weaker guarantee follows from an application of the algorithm for multi-budgeted matching problems described in \cite[Section 4.3]{grandoni2014new}. For each possible number of yellow edges in an optimal solution, an instance of the matching problem with one budget per color is defined and that algorithm is run.  The algorithm achieves an additive error of 13 when there are three budgets. Applied to our instance, one can show that it yields a matching whose size is within 13 of the optimal value and whose violations of the color requirements sum up to at most 13. We remark that our algorithm, apart from having a much better guarantee, is also more efficient, as it does not need guessing and hence only solves one LP (as in the statement of Theorem \ref{thm:main}) instead of many.
    \item A better guarantee comes from a refinement of the arguments in \cite{grandoni2014new} (also appearing in \cite{grandoni2010approximation}), in particular the application of Lemma \ref{lem:curves}. As we detail in Section \ref{sec:cycle}, by defining appropriate curves and finding intersecting pairs for them, we obtain a weaker version of our result on cycles. This, in turn, implies a weaker version of our Theorem \ref{thm:main}, where we allow a loss of two on the required number of blue edges instead of one (see the discussion at the end of Section \ref{sec:goodpaths}). It will require applying the Crossing Lemma, together with additional case analysis, to prove our result.
\end{itemize} 

\paragraph{Outline} The paper is organized as follows. Section \ref{sec:cycle} contains the aforementioned result on cycles and paths, and its proof, which exploits the Crossing Lemma (Lemma \ref{lem:crossing}). In Section \ref{sec:general}, we show how this leads to the proof of our main result (Theorem \ref{thm:main}). Section \ref{sec:crossing} contains the proof of the Crossing Lemma. Finally, in Section \ref{sec:final} we discuss some open questions and possible extensions of our results, in particular to the case of more than three colors.

\bigskip

\section{Cycles and paths}\label{sec:cycle}

In this section, we consider a colored graph $G=(V,E)$, whose edge set $E$ is partitioned into three subsets $R, B, Y$, (red, blue, and yellow edges).
We will privilege two colors, say red and blue, and choose them as coordinates to represent matchings in the plane: in particular, with every matching $M$ of $G$, we associate a point $p_M=(|R\cap M|, |B\cap M|)\in \R^2$.

The goal of this section is to show the following theorem. Before stating it, we remark that whenever we consider a path or a cycle, we assume that the edges are numbered starting from zero: in the case of a path, edge 0 is one of the extremal edges of the path, whereas, for a cycle, it is an arbitrary edge, and the edges are numbered consecutively along the path or the cycle.

\begin{thm}\label{thm:cycle_paths_choice}  Let $G$ be a colored path or colored even cycle, and denote by $M_0, M_1$ the matchings of even and odd edges respectively, with $|M_0|\geq |M_1|$. 
Assume that $(k_R, k_B)$ is an integer point on the segment between $p_{M_0}, p_{M_1}$. Then there is a matching $M$ of $G$ with $|M|\geq |M_1|-1$, exactly $k_R$ red edges, and $k_B$ or $k_B-1$ blue edges. This matching can be found efficiently.
\end{thm}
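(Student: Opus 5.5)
We plan to reduce the path case to the cycle case and then prove the cycle case with the Crossing Lemma (Lemma~\ref{lem:crossing}), applied to a plane curve recording the colour counts of ``sliding'' matchings. For a path with edges $0,\dots,m-1$ we append a dummy edge (and, if $m$ is even, two dummy edges) of a neutral colour, chosen so that the path closes into an even cycle. Since a dummy edge contributes nothing to $p_M$, removing it from a matching of the cycle loses at most one edge, and this loss is absorbed by the slack $|M_1|-1$ (we would check parities so that the accounting still gives $|M|\ge|M_1|-1$ on the path). So we focus on an even cycle with $2n$ edges, where $|M_0|=|M_1|=n$.

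For $t\in\{0,\dots,2n\}$, let $N_t$ consist of the edges of $M_1$ among edges $0,\dots,t-1$ together with the edges of $M_0$ among edges $t,\dots,2n-1$, dropping one edge at the junction when it conflicts. Then $N_t$ is a matching of size at least $n-1$, and consecutive points $p_{N_t}$ differ by a bounded integer step. Define $f:[0,2n]\to\R^2$ as the piecewise-linear interpolation, first taken along even shifts only so that $f(0)=p_{M_0}$ and $f(\tau)=p_{M_1}$. Its periodic extension $f^\infty$ corresponds to rotating the starting point of the sliding. Matchings of the form ``$M_1$ on the arc $[v,u)$ and $M_0$ elsewhere'', with at most two junction losses, then have colour vector
\[
f^\infty(u)-f(v)+f(0).
\]
An intersecting pair for $(f^\infty,f+q-f(0))$ with $q=(k_R,k_B)$ and $v<u<v+\tau$ therefore gives an arc with colour vector exactly $q$. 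This recovers the weaker Lemma~\ref{lem:curves}-type result. The issue is that the two junctions may each cost one edge and may shift the colour counts.

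The main obstacle is to control the junctions so that the loss is at most one edge overall, the red count is exact, and only one blue edge is lost. The plan is as follows. First, if $q\in\Im(f)$, a single slide $N_t$ already works, since it has only one junction. Second, we perturb $f$, shifting the curve by a small generic vector, so that $f^\infty$ becomes injective. Third, we apply the Crossing Lemma to obtain a crossing pair $(u,v)$. The crossing condition says that $f+q-f(0)$ passes from one side of $f^\infty$ to the other at $v$. We would use this to choose, among the integer neighbours $u,u\pm1$ and $v,v\pm1$, ones where the segment directions are compatible: both junction edges then have the same colour and can be resolved by dropping a single edge. Since both curves have unit-step segments, the only remaining discrepancy is a blue deficit of one, which is allowed.

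Making this rigorous will need a case analysis on the colours of the edges at positions $u$ and $v$, distinguishing simple from overlapping crossings. The overlapping case is handled by moving to the endpoint $s$ given in the definition of a crossing pair. Efficiency is immediate because only $O(n^2)$ pairs $(u,v)$ need to be checked.
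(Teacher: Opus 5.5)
\textbf{There is a genuine gap: the step you call the main obstacle is left as an unexecuted plan, and that plan rests on a miscount.}

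Take the arc $\{2v,\dots,2u-1\}$ (indices mod $2n$). Its odd edges together with the even edges outside it form $\{2v+1,2v+3,\dots,2u-1\}\cup\{2u,2u+2,\dots,2v-2\}$. This set has exactly $n$ edges and colour vector exactly $(k_R,k_B)$. At the $v$-end, both $2v-1$ and $2v$ are omitted, so there is a gap, not a conflict. The only conflict is the adjacent pair $2u-1,2u$. So exactly one edge must be dropped, and the only question is which one.

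The missing idea is to first reduce to a proper colouring. Two consecutive edges of the same colour are one even and one odd edge. Contracting them therefore shifts $p_{M_0}$, $p_{M_1}$ and $(k_R,k_B)$ by the same vector, and the cycle stays even. A matching of the contracted cycle extends by one of the two contracted edges, restoring both the size and the colour count. Once the colouring is proper, at most one of $2u-1,2u$ is red. Deleting a non-red one gives a matching of size $n-1$ with exactly $k_R$ red edges and $k_B$ or $k_B-1$ blue edges.

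This is the paper's Theorem~\ref{thm:cycles_no_choice}. Combined with the intersecting pair from Lemma~\ref{lem:curves} (which you already obtain) and your reduction from paths to cycles (which is fine), it proves the statement. The Crossing Lemma is not needed here; the paper uses it only to get exactly $k_B$ blue edges when $Y\ne\emptyset$.

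\textbf{The tools you propose instead would not work as written.}

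First, shifting $f$ by a vector does not affect whether $f^\infty$ is injective. A genuinely generic perturbation of the vertices would destroy integrality (Observation~\ref{cl:d_int}) and the correspondence between intersection points and arcs. In the paper, injectivity of $d^\infty$ follows from the absence of balanced paths via a nontrivial argument (Lemma~\ref{lem:crossing_for_d}, part 1). Without injectivity, the Crossing Lemma cannot be invoked.

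Second, replacing $u$ or $v$ by a neighbouring integer changes the arc's colour vector by a nonzero unit move, so it no longer equals $(k_R,k_B)$.

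Third, arranging for the two conflicting edges to have the same colour is counterproductive. Dropping one then loses an edge of that colour, which is exactly the failure case when that colour is red.

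Finally, the curve must be built from the undropped sets. If the conflicting edge is already removed in $N_t$, the formula $f^\infty(u)-f(v)+f(0)$ for the colour vector no longer holds.
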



Theorem \ref{thm:cycle_paths_choice} is actually a consequence of the following result, which only concerns even cycles.

\begin{thm}\label{thm:cycle_choice} 
    Let $G$ be a colored even cycle of length $2\ell$ and denote by $M_0, M_1$ the perfect matchings of even and odd edges, respectively.
    Assume that $(k_R, k_B)$ is an integer point on the segment between $p_{M_0}, p_{M_1}$. Then:
    \begin{enumerate}
        \item If  $Y\neq \emptyset$, there is a matching $M$ of $G$ with $|M|\ge \ell-1$ and $p_M =(k_R, k_B)$. 
        \item If $Y=\emptyset$, there is a matching $M$ of $G$ with $|M|\ge \ell-1$ and $p_M =(k_R, k_B-1)$.  
    \end{enumerate}
\end{thm}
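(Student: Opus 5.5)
We would encode the matchings of the cycle that are obtained by switching $M_0$ to $M_1$ on an arc as points of a lattice curve, and then use the Crossing Lemma (Lemma~\ref{lem:crossing}) to find an arc whose switch realizes $(k_R,k_B)$.

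\textbf{Setup.} Number the edges $e_0,\dots,e_{2\ell-1}$. Let $c(e)\in\{(1,0),(0,1),(0,0)\}$ be the color vector of $e$ (red, blue, yellow respectively). Define a piecewise-linear map $f:[0,2\ell]\to\R^2$ with $f(0)=p_{M_0}$ and integer steps
\[
f(2i+1)=f(2i)-c(e_{2i}),\qquad f(2i+2)=f(2i+1)+c(e_{2i+1}),
\]
interpolating linearly in between. Thus $f(2\ell)=p_{M_1}$, and $f^\infty$ is the periodic continuation of~\eqref{eq:f^inf} with $\tau=2\ell$.

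\textbf{Key observation.} For integers $v\le u\le v+2\ell$, consider the edge set consisting of the even edges outside the arc, the odd edges inside the arc, and one boundary edge, namely the one whose step is at an odd position of $u$ or $v$. This set is a matching of size $\ell$ or $\ell-1$. Its point is
\[
p_{M_0}+f^\infty(u)-f(v),
\]
possibly corrected by one boundary edge, according to the parities of $u$ and $v$. Hence an intersecting pair $f^\infty(u)=f(v)+q-f(0)$ with $q=(k_R,k_B)$ and $v<u<v+2\ell$ gives a matching of size at least $\ell-1$ with point $q$, provided $u,v$ are integers with the right parities. The condition $v<u<v+2\ell$ guarantees that the arc is proper, so that at most one edge is lost.

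\textbf{Plan.}
\begin{enumerate}
\item Check that $q=(k_R,k_B)$ lies on the segment between $f(0)$ and $f(2\ell)$. If $q\in\Im(f)$ at an integer parameter, then directly read off a matching of size at least $\ell-1$ (a one-sided arc), and we are done.
\item Otherwise, handle injectivity. The curve $f^\infty$ may fail to be injective, since its steps can backtrack or revisit points. Remove closed loops of $f$ (and of its periodic extension) by a shortcutting argument: a repeated point gives a sub-arc whose switch changes neither color count, so it can be excised while only shortening the parametrization. Alternatively, apply a tiny generic perturbation that keeps the combinatorics. Then invoke Lemma~\ref{lem:crossing} to get a crossing pair $(u,v)$ with $v<u<v+2\ell$.
\item Since both curves live on the integer lattice with axis-parallel unit (or half-diagonal) steps, a crossing must occur at a lattice point or along a common segment. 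The crossing condition (opposite sides before and after) is used to show that one can slide $(u,v)$ to a pair of integer parameters with compatible parities. Here the mere intersection of Lemma~\ref{lem:curves} is insufficient, and this is exactly where the weaker "loss of two" would otherwise arise. This is a case analysis on the colors of the at most four edges adjacent to the crossing point: red, blue, yellow, and the direction in which each curve enters and leaves.
\item In the case $Y=\emptyset$, every step is a nonzero unit vector. A parity argument shows that $|M|$ and the coordinate sum of $p_M$ are tied, so exact realization of $(k_R,k_B)$ may be impossible. Redo the same argument with target $q-(0,1)$, or equivalently with one blue edge dropped at the boundary, obtaining point $(k_R,k_B-1)$. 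The presence of a yellow edge is exactly what provides a zero step that absorbs the parity defect in case~1.
\end{enumerate}

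\textbf{Main obstacle.} We expect the main obstacle to be step~3, possibly combined with the injectivity reduction: turning a topological crossing into a combinatorial matching with the correct parities at both arc endpoints. For this we would first enumerate the local configurations of the two lattice curves at a crossing point and show that the "opposite sides" condition rules out the configurations in which both endpoints would force dropping an edge.
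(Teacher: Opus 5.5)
Your overall strategy (encode arc switches by a lattice curve, apply the Crossing Lemma, finish with a local case analysis) is the paper's, but the proposal breaks down before the Crossing Lemma can be invoked. Your half-step curve $f$ has a zero step at every yellow edge: $f(t)=f(t+1)$ whenever $e_t\in Y$. So $f^\infty$ is never injective in case~1, which is the only case where the crossing argument is needed. Even apart from zero steps, properly colored odd zero-sum arcs such as R\,B\,Y\,R\,B make $f$ return to the same point.

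None of your repairs works. A zero-sum arc can be contracted only if it has even length; contracting an odd arc, e.g.\ a single yellow edge, leaves an odd cycle with no $M_0/M_1$ structure. Coincidences $f^\infty(s)=f^\infty(t)$ with $|s-t|\ge 2\ell$ do not correspond to arcs of the cycle at all. A generic perturbation destroys exactly the lattice structure that step~3 relies on. Even excising even balanced arcs needs an argument you do not give. If the excised arc contains all yellow edges, the reduced cycle falls under case~2 and only yields $(k_R,k_B-1)$, so the induction does not close. The paper handles this by a direct construction (case~3 in the proof of Lemma~\ref{lem:balanced}). It avoids all these problems by using the pair-level curve $d$, whose moves are never zero once the coloring is proper. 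It then reduces to cycles without balanced paths and proves injectivity of $d^\infty$ through a minimal-multiple argument based on Lemma~\ref{lem:curves}.

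Step~3 carries all the combinatorial content, yet it is only announced, and the device that makes it work is missing. In your own bookkeeping, only a pair with $v$ even and $u$ odd gives a matching whose point is exactly $(k_R,k_B)$. Every other parity combination gives a set of size $\ell$ or $\ell+1$ with a conflict at one or both ends, and dropping a non-yellow edge changes the point. This already makes step~1 false: $q=f(t)$ with $t$ even and $e_{t-1},e_t\notin Y$ does not directly give a desired matching. Hence the hypothesis $q\notin\Im(f)$ of the Crossing Lemma is not secured either.

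The paper's key observation is an exchange trick. For a good path $\{2v,\dots,2u-1\}$, if one of the edges $2u-1,2u,2v-2,2v+1$ is yellow, a desired matching exists. A yellow edge next to the start is swapped with its neighbour, and the conflict at the other end is resolved by dropping the edge whose color matches the one brought in. This yields Assumption~\ref{ass:good_path}. After renumbering so that edge $1$ or $2\ell-2$ is yellow, it gives $q\notin\Im(d)$. Together with the excluded move pairs of Observation~\ref{obs:exclude_moves}, it makes every local configuration at a crossing contradictory.

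For overlapping crossings one must also show that every intermediate pair along the overlap still defines a good path; in case (b) this needs $i\le\lceil(v-u+\ell)/2\rceil-1$. Only then does the assumption apply along the whole overlap and force it to be an alternating red-blue diagonal. ``Sliding'' $(u,v)$ gives no such control.

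Finally, in step~4 the point $q-(0,1)$ has coordinate sum $\ell-1$, so it is not on the segment between $f(0)$ and $f(2\ell)$ and neither lemma applies. The case $Y=\emptyset$ is instead settled by the reduction to proper, hence alternating red-blue, colorings.
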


We first show how Theorem \ref{thm:cycle_choice} implies Theorem \ref{thm:cycle_paths_choice}.

\begin{proof}[Proof of Theorem \ref{thm:cycle_paths_choice}]
 
If $G$ is an even cycle, Theorem \ref{thm:cycle_choice} clearly implies the existence of the desired matching.
    If $G$ is a path of even length, we identify the two extremes of $G$, thus obtaining an even cycle. By applying Theorem \ref{thm:cycle_choice}, we obtain a matching that is also matching for $G$ and satisfies all the requirements. 

    Now, let $G$ be an odd path, say an $M_0$-augmenting path of length $2\ell-1$. We add a dummy yellow edge joining the extremes of the path, in order to turn it into an even cycle, and apply Theorem \ref{thm:cycle_choice}, thus obtaining a matching $M$ of size at least $\ell-1$. Possibly removing the dummy edge from $M$, we find the desired matching with at least $\ell-2 = |M_1|-1$ edges. 

    Finally, in each of these cases a matching with the properties of Theorem \ref{thm:cycle_paths_choice} can be found efficiently by inspection, as every matching of size at least $|M_1|-1$ leaves exposed at most two nodes. 
\end{proof}

\begin{remark}\em
  The bound on $|M|$ given in Theorem \ref{thm:cycle_paths_choice} is tight. Consider, for instance, the odd path whose color sequence is B\,R\,Y\,R\,Y\,B\,Y\,B\,Y\,R\,Y\,R\,B, and choose $k_R=3, k_B=2$. Here $|M_0|=7$, $|M_1|=6$, $p_{M_0}=(0,2)$, and $p_{M_1}=(4,2)$, thus $(k_R,k_B)$ belongs to the segment between $p_{M_0},p_{M_1}$. However, the only matchings with three red edges and two blue edges have no yellow edges, and the only matchings with three red edges and one blue edge have at most one yellow edge. Thus, these matchings have size at most 5. This shows that the bound of Theorem \ref{thm:cycle_paths_choice} is tight for odd paths.
By attaching a yellow edge to the last node of the path above, we obtain an even path for which the bound of the theorem is tight.
The same holds if we turn the even path into a cycle by identifying its extreme nodes.
\end{remark}


\subsection{Proof of Theorem \ref{thm:cycle_choice}: preliminary results}
The rest of the section is dedicated to proving Theorem \ref{thm:cycle_choice}.
We begin with a brief proof overview. In this subsection, we observe that the theorem is easier to prove in certain cases. We therefore rule out these cases by formulating a series of assumptions, which we may impose throughout the argument. These assumptions simplify both the notation and the structure of the proof. In Section \ref{sec:goodpaths}, we introduce the notions of good paths and imbalance curve, which are crucial ingredients for the proof. By exploiting the existence of intersecting pairs related to our imbalance curve, we will make a first step toward proving Theorem \ref{thm:cycle_choice}, showing a weaker version already mentioned at the end of Section \ref{sec:intro}. Finally, in Section \ref{sec:cycle_crossing}, we exploit additional structural properties, such as the existence of crossing pairs, and complete the proof  through a case analysis.

In the following we assume that the hypotheses of \ref{thm:cycle_choice} are satisfied. In particular, $G$ is a colored cycle whose edges are numbered from $0$ to $2\ell-1$, for some integer $\ell\ge 2$.

A first easy observation is that Theorem \ref{thm:cycle_choice} holds if $(k_R,k_B)$ coincides with $p_{M_0}$ (resp., $p_{M_1}$,), as in this case it is sufficient to choose $M=M_0$ (resp., $M_1$). Therefore, in the rest of the section we will work under the following assumption:

\begin{ass}\label{ass:open}
The integer point $(k_R,k_B)$ belongs to the open segment between $p_{M_0},p_{M_1}$.
\end{ass}

Now, let $P$ be a nontrivial even path in $G$, or $G$ itself. We say that $P$ is \emph{balanced} if each color appears the same number of times on the even edges of $P$ and on the odd edges of $P$, i.e., $|P\cap C\cap M_0| = |P\cap C\cap M_1|$ for each $C\in\{R,B,Y\}$. (We often abuse notation and use $P$ to refer to the edge set of $P$.) Two incident edges of the same color are an example of a balanced path. To simplify the exposition, we call $P$ a balanced {\em path} even when $P=G$.

\begin{lemma}\label{lem:balanced}
    In proving Theorem \ref{thm:cycle_choice}, it is sufficient to consider cycles that do not contain balanced paths.
\end{lemma}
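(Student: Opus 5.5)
The plan is induction on $\ell$, removing one balanced path at a time. Suppose $G$ contains a balanced path $P$. If $P=G$, then $p_{M_0}=p_{M_1}$, so the open segment of Assumption~\ref{ass:open} contains no point and there is nothing to prove. Otherwise $P$ is a proper even subpath with $2m\le 2\ell-2$ edges and distinct endpoints $u,v$.

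\textbf{Contraction.} Delete the interior of $P$ and identify $u$ with $v$ into a new node $w$. This gives an even cycle $G'$ of length $2\ell'=2\ell-2m$, possibly a $2$-cycle with parallel edges, which is harmless. Since $|P|$ is even, the edges after $P$ shift by an even amount, so parities are preserved: the even (resp.\ odd) perfect matching $M_0'$ (resp.\ $M_1'$) of $G'$ is $M_0\setminus P$ (resp.\ $M_1\setminus P$). Put
\[
c=(|P\cap R\cap M_0|,\,|P\cap B\cap M_0|),
\]
which by balancedness also equals $(|P\cap R\cap M_1|,|P\cap B\cap M_1|)$. Then $p_{M_i}=p_{M_i'}+c$ for $i=0,1$. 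Hence $(k_R,k_B)-c$ is an integer point on the segment between $p_{M_0'}$ and $p_{M_1'}$, and Theorem~\ref{thm:cycle_choice} for $G'$ applies by induction.

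\textbf{Lifting a matching.} Let $M'$ be a matching of $G'$ with $|M'|\ge\ell'-1$ and the required point. Every edge of $G'$ is an edge of $G$, and $M'$ uses at most one edge at $w$, so it covers at most one of $u,v$ in $G$. The path $P$ has $2m+1$ nodes. $M_0\cap P$ is a matching of $m$ edges missing exactly one endpoint of $P$, and $M_1\cap P$ is a matching of $m$ edges missing the other endpoint. Choose whichever of the two avoids the endpoint covered by $M'$. By balancedness both choices contribute exactly $c$ to the point, so
\[
M=M'\cup(M_i\cap P)
\]
is a matching of $G$ with $|M|\ge \ell'-1+m=\ell-1$ and $p_M=p_{M'}+c$, which is the required point.

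\textbf{Reduction.} Iterating this contraction, we reduce to a cycle with no balanced path. The lifting step is also constructive, which matters for Theorem~\ref{thm:cycle_paths_choice}.

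\textbf{Main obstacle: the yellow dichotomy.} The delicate point is that case 1 versus case 2 of Theorem~\ref{thm:cycle_choice} depends on whether $Y\ne\emptyset$. If $G$ has yellow edges but all of them lie in $P$, then $G'$ has none, and induction only gives the weaker point $(k_R,k_B-1)$. I would handle this in one of two ways.
\begin{itemize}
\item First try to pick $P$ so that it leaves a yellow edge outside. A minimal balanced path is short, for example two adjacent edges of the same color. If every balanced path swallows all yellow edges, the structure is very restricted, since all yellow edges then lie in every balanced path.
\item Failing that, treat the remaining configuration directly. Here $G'$ is red-blue, and on a red-blue even cycle a matching is determined by its red and blue counts up to size. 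One can try to trade the two uncovered nodes of $M'$ against a yellow edge of $P$: replace one blue-neutral edge of $M_i\cap P$ by a different choice inside $P$ while keeping the count $c$, in order to recover the missing blue edge.
\end{itemize}
I expect this yellow bookkeeping, rather than the contraction itself, to require the real care.
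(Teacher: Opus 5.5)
Your contraction-and-lift step is correct. It coincides with the paper's argument whenever the contracted cycle $G'$ still contains a yellow edge, and it also covers the case $Y=\emptyset$, which the paper treats separately. The gap is the case you flag but do not resolve: $Y\neq\emptyset$ while every yellow edge lies in $P$. Then induction on $G'$ only produces the point $(k_R,k_B-1)$, and neither of your remedies works as stated.

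Consider the $6$-cycle colored Y\,Y\,R\,B\,R\,B on edges $0,\dots,5$, with $(k_R,k_B)=(1,1)$. This point lies in the open segment between $p_{M_0}=(2,0)$ and $p_{M_1}=(0,2)$.
\begin{itemize}
\item Its only balanced path is the yellow pair $\{0,1\}$, so $P$ cannot be chosen to leave a yellow edge outside.
\item The only matching of size $2$ with point $(1,1)$ is $\{2,5\}$. It uses no edge of $P$ and covers \emph{both} endpoints of $P$ from outside, so it is not of the form ``matching of $G'$ plus edges of $P$''.
\item Re-choosing edges inside $P$ while keeping $M'$ fixed and the contribution of $P$ equal to $c$ leaves the point at $(k_R,k_B-1)$. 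The repair must change the matching outside $P$.
\end{itemize}

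The missing ingredient, which is the paper's third case, is a direct construction instead of induction.
\begin{itemize}
\item If $G$ has two consecutive red or two consecutive blue edges, contract that pair instead of $P$. All yellow edges then survive in $G'$, and your lifting applies.
\item Otherwise there are no such repetitions, and all yellow edges lie in $P$. So the edges following the last yellow edge of $P$ form a single alternating red/blue stretch containing all $2h$ edges of $G'$.
\item Renumber so that this yellow edge is edge $0$ and, say, edge $1$ is red. Let $(h_R,h_B)$ be the red/blue counts of either parity class of $P$. Then $p_{M_0}=(h_R,h_B+h)$ and $p_{M_1}=(h_R+h,h_B)$.
\item Writing $(k_R,k_B)=\lambda p_{M_0}+(1-\lambda)p_{M_1}$, you get $(k_R,k_B)=(h_R+(1-\lambda)h,\,h_B+\lambda h)$, with $(1-\lambda)h$ an integer.
\item Start from $M^{(0)}=M_0\setminus\{0\}$. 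This is a matching of size $\ell-1$ with point $p_{M_0}$, since edge $0$ is yellow.
\item Set $M^{(i)}=(M^{(i-1)}\cup\{2i-1\})\setminus\{2i\}$. Each $M^{(i)}$ is again a matching of size $\ell-1$, and its point moves by $(1,-1)$ at each step. Hence $M^{((1-\lambda)h)}$ is the desired matching.
\end{itemize}
In the example, this yields exactly $\{2,5\}$ in the original numbering. Without this step, your reduction does not prove the lemma.
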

\begin{proof}
Assume by contradiction that Theorem \ref{thm:cycle_choice} holds for cycles without balanced paths, but there is some cycle $G$ containing a balanced path $P$ that violates the theorem. We choose $G$ of minimum length, and, after fixing $G$, we also take $P$ of minimum length. Note that $P$ has an even number of edges. 

By Assumption \ref{ass:open}, $p_{M_0}\ne p_{M_1}$, i.e., $G$ is not balanced. Thus, $P\ne G$.
Furthermore, we claim that $|P|\le|G|-4$. Indeed, if $|P|=|G|-2$, the balancedness of $P$ implies that $p_{M_1}-p_{M_0}\in\{0,\pm1\}^2$. Then the only integer points on the segment between $p_{M_0},p_{M_1}$ are $p_{M_0},p_{M_1}$ themselves. This contradicts Assumption \ref{ass:open}, and therefore $|P|\le|G|-4$.

Let $G'$ be obtained from $G$ by contracting $P$ to a single vertex. By the above observations, $G'$ is a cycle with an even number of edges. 
We analyze three cases.

\begin{enumerate}
\item
Assume first that $G$ has no yellow edges. In this case, it is easy to see that $P$ must have length two: if not, by the minimality of $P$, there are no pairs of consecutive red-red or blue-blue edges in $G$ (RR and BB repetitions, for short), thus all even edges are red and all odd edges are blue (or the other way around), and there is no balanced path, a contradiction. Therefore $P$ has length two, and to fix ideas we assume that $P$ consists of two consecutive red edges. Then $G'$ is a cycle of length $2\ell-2$ that satisfies the hypotheses of Theorem \ref{thm:cycle_choice} with $(k_R,k_B)$ replaced by $(k_R-1, k_B)$. 
By the minimality of $G$, the theorem holds for $G'$ and thus there is a matching $M'$ in $G'$ of size $\ell-2$ such that $p_{M'}=(k_R-1, k_B-1)$. In $G$, $M'$ is a matching that leaves exposed at least one of the extreme nodes of $P$. We can then augment $M'$ with one of the edges of $P$, thus obtaining a matching $M$ of size $\ell -1$ such that $p_{M}=(k_R, k_B-1)$. This contradicts the assumption that $G$ violates Theorem \ref{thm:cycle_choice}.
\item 
Assume now that $G'$ (and hence $G$) has at least one yellow edge.
Since $P$ is balanced, both its odd and its even edges have $h_R$ red and $h_B$ blue edges for some $h_R, h_B\in \Z_{\geq 0}$. Hence, $G'$ satisfies the hypotheses of Theorem \ref{thm:cycle_choice} with $p_{M_0}, p_{M_1}, (k_R,k_B)$ all decreased by $(h_R, h_B)$. Given a matching $M'$ of $G'$ with $|M'|=h-1$ (where $2h$ is the length of $G'$) and $p_{M'}=(k_R-h_R, k_B-h_B)$, a matching $M$ of $G$ with $|M|=\ell-1$ and $p_M=(k_R,k_B)$ is obtained by adding to $M'$ either the odd or the even edges of $P$.
\item
We are left with the case in which $G$ has yellow edges but $G'$ has none. In this case, rather than exploiting Theorem \ref{thm:cycle_choice} for $G'$, we prove it directly for $G$. Note that $P$ contains yellow edges, thus, due to the minimality of $P$, $G$ has no BB or RR repetition. As $G'$ has even length and contains no yellow edges, this implies that the edges of $G'$ alternate between red and blue.

Now, let us renumber the edges\footnote{Renumbering the edges might swap the role of $M_0$ and $M_1$, but this does not affect the hypotheses of Theorem \ref{thm:cycle_choice}.} of $G$ starting from the last yellow edge of $P$: that edge is numbered 0, and it is followed by a sequence of alternating red and blue edges that contains all the edges of $G'$. 
To fix ideas, say that edge $1$ is red, edge 2 is blue, and so forth, as the other case is analogous.
If $G'$ has $2h$ edges, and we define $h_R,h_B$ as in the previous case, we have that
$p_{M_0}=(h_R, h_B+h)$ and $p_{M_1}=(h_R+h, h_B)$. By Assumption \ref{ass:open}, there exists $\lambda\in (0,1)$ such that $(k_R, k_B)=\lambda p_{M_0} + (1-\lambda)p_{M_1},$ implying
\begin{equation}\label{eq:kRkB}
    (k_R, k_B)=(h_R+(1-\lambda)h, h_B + \lambda h),     
\end{equation}
from which it follows that $\lambda h$, $(1-\lambda) h$ are integer numbers.
    
Define now the following sequence of matchings:  $M^{(0)}=M_0\setminus\{0\}$, $M^{(i)}=M^{(i-1)}\cup \{2i-1\}\setminus \{2i\}$ for $i\in\{1,\dots,h\}$. Notice that all these matchings have size $\ell-1$. We have $p_{M^{(0)}}=p_{M_0}=(h_R, h_B+h)$ because edge 0 is yellow, and then the sequence proceeds by exchanging one red edge for a blue edge. In particular, because of \eqref{eq:kRkB}, $M^{((1-\lambda)h)}$ has $k_R$ red edges and $k_B$ blue edges, which contradicts the assumption that $G$ violates Theorem \ref{thm:cycle_choice}.    
\end{enumerate}
This concludes the proof.
\end{proof}

Because of the above lemma, from now on, in proving Theorem \ref{thm:cycle_choice} we can make the following assumption:

\begin{ass}\label{ass:no_balanced}
     $G$ does not contain balanced paths. In particular, $G$ does not contain two consecutive edges of the same color, i.e., the coloring is \emph{proper} (borrowing the term from classical edge coloring). 
\end{ass}

Lemma \ref{lem:balanced} (and in particular Assumption \ref{ass:no_balanced}) easily implies the second case of Theorem \ref{thm:cycle_choice}, i.e.,  when there are no yellow edges. This red-blue case was already shown in \cite{yuster2012almost} (as mentioned in Section \ref{sec:intro}), but enforcing the coloring to be proper makes the proof immediate, as demonstrated below.

\begin{obs}\label{obs:noY}
  Theorem \ref{thm:cycle_choice} holds under the additional assumption that $Y=\emptyset$.
\end{obs}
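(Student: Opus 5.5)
Under Assumption \ref{ass:no_balanced} and $Y=\emptyset$, the coloring of $G$ is proper with only two colors. Hence the colors alternate around the cycle: every even edge has one color and every odd edge has the other. Since $G$ is not balanced (Assumption \ref{ass:open} gives $p_{M_0}\neq p_{M_1}$), the two perfect matchings are monochromatic of different colors. Up to swapping $M_0$ and $M_1$ (a renumbering of the edges, which does not affect the hypotheses), we may assume that all even edges are red and all odd edges are blue. Then $p_{M_0}=(\ell,0)$ and $p_{M_1}=(0,\ell)$, so every point of the segment satisfies $x+y=\ell$. In particular $k_R+k_B=\ell$ with $1\le k_R\le \ell-1$ by Assumption \ref{ass:open}.

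The plan is to exhibit the matching explicitly, mimicking the exchange sequence used in case 3 of the proof of Lemma \ref{lem:balanced}. Start from $M_0\setminus\{0\}$, which has $\ell-1$ red edges and no blue edge, and define
\[
M^{(0)}=M_0\setminus\{0\},\qquad M^{(i)}=M^{(i-1)}\cup\{2i-1\}\setminus\{2i\}\quad (i=1,\dots,\ell-1).
\]
We check by induction that each $M^{(i)}$ is a matching. Edge $2i-1$ is adjacent only to edges $2i-2$ and $2i$. Edge $2i-2$ has either been removed (when $i=1$ it is edge $0$) or replaced at the previous step. Edge $2i$ is removed at the same step. So $M^{(i)}$ consists of the odd edges $1,3,\dots,2i-1$ together with the even edges $2i+2,\dots,2\ell-2$, and these are pairwise disjoint.

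Each step trades one red edge for one blue edge, so $|M^{(i)}|=\ell-1$ and $p_{M^{(i)}}=(\ell-1-i,\,i)$. Choosing $i=k_B-1=\ell-1-k_R$, which lies in $\{0,\dots,\ell-2\}$, we get $p_M=(k_R,k_B-1)$ with $|M|=\ell-1$, as claimed in item 2 of Theorem \ref{thm:cycle_choice}.

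There is no real obstacle. The only point needing care is the reduction to the alternating coloring, which Lemma \ref{lem:balanced} provides: two consecutive equal colors would form a balanced path. The loss of one blue edge cannot be avoided in general here. With no yellow edges, a matching of size $\ell-1$ containing $k_R$ red edges has exactly $\ell-1-k_R=k_B-1$ blue edges. So the statement of case 2 is exactly what this construction delivers.
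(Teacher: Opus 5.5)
Your proof is correct and is essentially the paper's argument. Assumption \ref{ass:no_balanced} forces the alternating coloring, so $k_R+k_B=\ell$. Your $M^{(k_B-1)}$ is exactly the paper's matching, a block of $k_R$ consecutive red edges together with all blue edges not incident with them; you merely reach it through the exchange sequence from case 3 of Lemma \ref{lem:balanced} instead of writing it down directly.
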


\begin{proof}
   If $Y=\emptyset$, Assumption \ref{ass:no_balanced} implies that the edges of $G$ alternate between red and blue: let us say that all edges of $M_0$ are red and all edges of $M_1$ are blue. Since $(k_R,k_B)$ is a convex combination of $p_{M_0},p_{M_1}$, we have $k_R+k_B=\ell$. Then, the desired matching is obtained by selecting the first $k_R$ edges of $M_0$ and all the blue edges not incident with them, whose number is $\ell-k_R-1=k_B-1$.
\end{proof}

Thus, from now on we can focus on the first case of Theorem \ref{thm:cycle_choice}:

\begin{ass}\label{ass:Y}
   $G$ contains at least one yellow edge, i.e., $Y\neq \emptyset$.
\end{ass}

\subsection{Good paths and the imbalance curve}\label{sec:goodpaths}

Given a path $P$ of $G$ (or $P=G$), let $P_0=P\cap M_0$ (resp., $P_1=P\cap M_1$) be the set of its even (resp., odd) edges. We define the \emph{imbalance} of $P$ as the point
\[\delta(P) =(|P_1\cap R|-|P_0\cap R|, |P_1\cap B|-|P_0\cap B|)\in\R^2.\]
Notice that an even path is balanced if and only if $\delta(P)=(0,0)$. 

Now, let $G$ and $(k_R,k_B)$ be as in Theorem \ref{thm:cycle_choice}. For each $i\in\{0,1\}$, define $r_i=|M_i\cap R|$ and $b_i=|M_i\cap B|$.
Also, let $q=(k_R-r_0, k_B-b_0)$. We say that a nontrivial even path $P$ of $G$ is \emph{good} if it starts at an even edge and satisfies $\delta(P)=q$. 
\lq\lq $P$ starts at an even edge'' means that its edge numbers are of the form $2v,\dots, 2u-1$ with $v< u<v+\ell$, where, here and in the sequel, the numbering $2v,\dots, 2u-1$ must be taken modulo $2\ell$.  

In order to show Theorem \ref{thm:cycle_choice}, we will show the existence of paths that are good and also have some useful extra properties. 
To this end, let us now define a piecewise linear curve $d: [0,\ell]\rightarrow \R^2$, as follows:
\[d(t)= \delta(\{0,\dots,2t-1\}) \mbox{ for every $t\in \{0,\dots,\ell\}$},\]
and the curve is obtained by linearly interpolating the points $d(0),\dots,d(\ell)$. 
We call $d$ the {\em imbalance curve} of $G$. Let us give some intuition on the crucial role of curve $d$, which tracks the imbalance of the initial portions of our cycle. %
Notice that $d(0)=(0,0)$,  $d(\ell)=p_{M_1}-p_{M_0}$, and the point $q=(k_R-r_0, k_B-b_0)$ is on the segment between $d(0),d(\ell)$. Moreover, if $d$ contains $q$, in particular if $d(t)=q$ for some integer $t$, then by definition of $d$ the path from edge 0 to edge $2t-1$ has imbalance $q$, i.e., it is a good path. More generally, we can obtain a good path by finding integers $u, v$ with $u>v$ such that $d(u)-d(v)=q$. As we will verify, this corresponds to looking for an intersecting pair for $(d^\infty,d+q)$ (see Figure \ref{fig:d_example} for an intuition).
Hence, in the following we will study the behavior of $d$ in order to show the existence of the desired intersecting (actually, crossing) pair and good path.

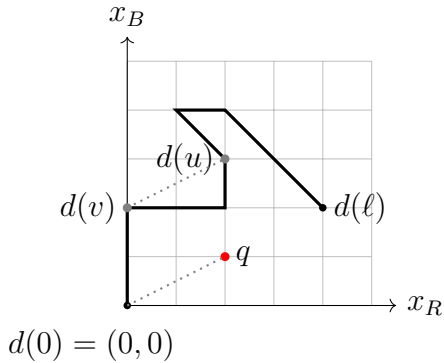
\begin{figure}[h]
    \centering         \begin{tikzpicture}[scale=0.65]

    \draw[step=1, lightgray, very thin] (0,0) grid (5,5);

    \draw[->] (0,0) -- (5.5,0) node[right] {$x_R$};
    \draw[->] (0,0) -- (0,5.5) node[above] {$x_B$};


\node[right] at (4,2) {$d(\ell)$};

\node[left] at (1.2,-0.8) {$d(0)=(0,0)$};

\draw[very thick] (0,0) -- (0,2) -- (2,2) -- (2,3) -- (1,4) -- (2,4) -- (3,3) -- (4,2);



\node[circle,fill=black, inner sep=1] (dl) at  (4,2){};
\node[circle,fill=black, inner sep=1] (d0) at  (0,0){};


\node[circle,fill=red, inner sep=1.2] (q) at  (2,1){};
\node[right] at (2,1) {$q$};

\node[circle,fill=gray, inner sep=1.2] (v) at  (0,2){};
\node[left] at (0,2) {$d(v)$};

\node[circle,fill=gray, inner sep=1.2] (u) at  (2,3){};
\node[left] at (2,3) {$d(u)$};

\draw[gray, thick, dotted]  (0,2) -- (2,3);
\draw[gray, thick, dotted]  (0,0) -- (2,1);

\end{tikzpicture}
    \caption{The imbalance curve $d: [0,\ell]\rightarrow \R^2$, with $\ell=9$, corresponding to the 18-edge cycle with the following coloring (where the edges are numbered from 0 to 17): Y\,B\,Y\,B\,Y\,R\,Y\,R\,Y\,B\,R\,B\,Y\,R\,B\,R\,B\,R.
    The point $q$ (in red) belongs to the segment between $d(0),d(\ell)$. While $d$ does not contain $q$, it does contain several pairs of points whose difference is exactly $q$, in particular $d(u)$ and $d(v)$. As the reader can easily verify, this translates into the existence of intersecting pairs for $(d^\infty,d+q)$ (some of which are crossing pairs).}
    \label{fig:d_example}
\end{figure}

 In the rest of the paper, we will always see $d$ (and every other related curve) as an oriented curve, where the orientation is in the direction of increasing $t$.

We remark that
\begin{equation}\label{eq:d-delta}
d(t)-d(t-1)=\delta(\{2t-2,2t-1\}) \mbox{ for every $t\in\{1,\dots,\ell\}$}.
\end{equation}
This vector depends exclusively on the color of the edges $2t-2, 2t-1$, and can only assume the values $(0,\pm 1)$,$(\pm 1, 0)$, $(-1,1)$, $(1, -1)$. (The value $(0,0)$ is ruled out because of Assumption \ref{ass:no_balanced}.)
Table \ref{tab:arrows} describes the correspondence between the segments of $d$ and the color of the edges of the cycle $G$, where the arrows indicate the orientation of the curve. We refer to the segments of $d$ as {\em moves} of $d$. 
The following observation is straightforward given the above considerations on the moves of $d$. (Here and in the following, by \lq\lq translation of $d$'' we mean the curve $t\mapsto d(t)+u$ for some $u\in\R^2$.)

\begin{obs}\label{cl:d_int}
        One has $d(t)\in \Z^2$ if and only if $t$ is integer, and every breakpoint\footnote{Given a piecewise-linear continuous function $f:I\to\R^2$ defined on a nondegenerate interval $I\subseteq\R$, we call {\em breakpoints} of $f$ the values $t$ in the interior of $I$ where $f$ is not differentiable.}   of $d$ is integer.
    Let $\bar d$ be any translation of $d$ by an integer vector. If $d$ and $\bar d$ intersect (i.e. $d(u)=\bar d(v)$ for some $u,v\in [0,\ell]$), then they intersect at an integer point (hence $u,v\in \Z$).
\end{obs}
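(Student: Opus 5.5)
The first two claims follow from the structure of the moves of $d$. By \eqref{eq:d-delta}, every move $d(t)-d(t-1)$ with $t\in\{1,\dots,\ell\}$ belongs to the set
\[
D=\{(0,\pm1),(\pm1,0),(-1,1),(1,-1)\}.
\]
Each of these vectors is a nonzero integer vector of the form $(a,0)$, $(0,a)$ or $(a,-a)$, with coprime (indeed unit) entries. Since $d(0)=(0,0)$, induction gives $d(t)\in\Z^2$ for every integer $t$.

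For a non-integer $t\in(t_0-1,t_0)$, write $d(t)=d(t_0-1)+\lambda w$ with $w\in D$ and $\lambda\in(0,1)$. Every $w\in D$ has some coordinate equal to $\pm1$. That coordinate of $d(t)$ is then an integer plus $\pm\lambda$, which is not an integer, so $d(t)\notin\Z^2$. This proves the first equivalence.

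Next, $d$ is affine on each interval $[t-1,t]$ with $t$ integer. So it is differentiable at every non-integer interior point, and all breakpoints are integers.

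For the final claim, let $\bar d=d+z$ with $z\in\Z^2$, and suppose $d(u)=\bar d(v)$. I will show that $u$ and $v$ can be taken integer. It suffices to prove that $u\in\Z$ if and only if $v\in\Z$, and then to handle the case where both are non-integer.

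The equivalence is immediate. Since $d(u)=d(v)+z$ with $z\in\Z^2$, the point $d(u)$ is integral if and only if $d(v)$ is. By the first part, this means $u\in\Z$ if and only if $v\in\Z$.

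The main step is the case where both $u$ and $v$ are non-integer, and I will modify the intersecting pair. Let $u$ lie in the move $d(a-1)+\lambda w$, and let $v$ lie in the move $d(b-1)+\mu w'$, with $\lambda,\mu\in(0,1)$. The equation reads
\[
d(a-1)-d(b-1)-z=\mu w'-\lambda w,
\]
so the right-hand side is an integer vector. I now split into cases according to whether $w,w'$ are parallel.

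\textbf{Parallel case.} Since $D$ consists of $\pm$ pairs, parallel means $w'=\pm w$. Then $(\mu\mp\lambda)w\in\Z^2$, and because $w$ has a unit coordinate, $\mu\mp\lambda\in\Z$. The two unit segments lie on the same line, and the relevant translate of one overlaps the other. Sliding $\lambda$ and $\mu$ simultaneously, with $\mu=\lambda$ or $\mu=1-\lambda$ plus a fixed offset, keeps the equality $d(u)=\bar d(v)$. Pushing $\lambda$ to $0$ or $1$ then gives integer parameters.

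\textbf{Non-parallel case.} Here I use the determinant: any two non-parallel vectors of $D$ form a lattice basis with determinant $\pm1$. Hence $\mu w'-\lambda w\in\Z^2$ forces $\lambda,\mu\in\Z$, which contradicts $\lambda,\mu\in(0,1)$. So this case cannot occur.

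The remaining task, and the only real obstacle, is the routine check that all six vectors of $D$ pairwise have determinant $\pm1$ or are parallel. This holds, since the determinant of any two of $(1,0)$, $(0,1)$, $(1,-1)$ is $\pm1$, and the other vectors of $D$ are their negatives.
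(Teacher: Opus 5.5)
Your proof is correct, and it is essentially the routine verification the paper leaves implicit: the paper gives no written proof and only says the observation is straightforward from the possible moves of $d$ in Table~\ref{tab:arrows}. Your unimodularity argument for non-parallel moves and your sliding argument for parallel moves are the natural way to fill that in. In the sliding step the constraints $\lambda,\mu\in(0,1)$ in fact force $\mu=\lambda$ or $\mu=1-\lambda$ exactly, so there is no extra offset.
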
 

\begin{table}
    \centering
\begin{tabular}{c|c|c|c}
$d(t)-d(t-1)$ & $2t-2$ & $2t-1$ &  \\
\hline
$(-1,\phantom+0)$ & $R$ & $Y$ & $\leftarrow$\\
$(\phantom+1,\phantom+0)$ & $Y$ & $R$ & $\rightarrow$\\
$(\phantom+0,-1)$ & $B$ & $Y$ &\BY\\
$(\phantom+0,\phantom+1)$ & $Y$ & $B$ &\YB\\
$(-1,\phantom+1)$ & $R$ & $B$ & \RB\\
$(\phantom+1,-1)$ & $B$ & $R$ & \BR 
\end{tabular}
\caption{Correspondence between moves of $d$ and edge colors.}
  \label{tab:arrows}
\end{table}

The following lemma makes the connection between good paths and intersecting pairs explicit.

\begin{lemma}    \label{lem:intersecting_pair_good_path}
Let $u, v$ be integers such that $0<v<\ell$, $v < u < v+\ell$, and $d^\infty(u) = d(v)+q$. Then the path $P=\{2v,\dots, 2u-1\}$, where the numbering is modulo $2\ell$, is good.
\end{lemma}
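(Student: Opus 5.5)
We need to check that $P$ is a nontrivial even path starting at an even edge with $\delta(P)=q$. The only real content is the imbalance identity; the structural conditions come straight from the hypotheses.

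\textbf{Structure of $P$.} Since $u,v$ are integers with $v<u<v+\ell$, the index set $\{2v,\dots,2u-1\}$ has $2(u-v)$ elements. This number is between $2$ and $2\ell-2$. Hence, taken modulo $2\ell$, these are distinct consecutive edges forming a proper nontrivial even path, and it starts at the even edge $2v$.

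\textbf{Periodic extension.} Write $u=k\ell+r$ with $0\le r<\ell$. Because $0<v<u<v+\ell<2\ell$, we have $k\in\{0,1\}$.

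By \eqref{eq:f^inf}, $d^\infty(u)=k\,d(\ell)+d(r)$. Here $d(\ell)=\delta(G)$ is the imbalance of the whole cycle, and $d(r)=\delta(\{0,\dots,2r-1\})$.

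Imbalance is additive over disjoint edge sets, provided each edge's parity label is preserved. The cycle has even length $2\ell$, so reducing indices modulo $2\ell$ preserves parity, and the labels are consistent.

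\textbf{Case $k=0$.} Then $u\le\ell$, and $\{0,\dots,2u-1\}$ splits into $\{0,\dots,2v-1\}$ and $P$. Therefore
\[
d(u)=d(v)+\delta(P),
\]
so $\delta(P)=d^\infty(u)-d(v)=q$.

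\textbf{Case $k=1$.} Now $P$ consists of $\{2v,\dots,2\ell-1\}$ followed by $\{0,\dots,2r-1\}$. Its imbalance is
\[
\delta(P)=\bigl(d(\ell)-d(v)\bigr)+d(r)=d^\infty(u)-d(v)=q.
\]

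In either case $\delta(P)=q$, so $P$ is good.

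There is no real obstacle. The only care needed is the wrap-around case, specifically checking that the parity labels of the edges of $P$ agree with the global $M_0/M_1$ labels; this holds because the cycle length is even.
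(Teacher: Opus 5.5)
Your proof is correct and follows the paper's argument. The paper makes the same split into $u\le\ell$ and $u>\ell$ and reads the first case directly off the definition of $d$. The only cosmetic difference is in the wrap-around case: the paper writes $\delta(P)$ as $\delta(G)$ minus the imbalance of the complementary path $\{2(u-\ell),\dots,2v-1\}$, whereas you add the imbalances of the two pieces $\{2v,\dots,2\ell-1\}$ and $\{0,\dots,2(u-\ell)-1\}$.
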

\begin{proof}
    First, consider the case $u\leq \ell$. Then $d(v)+q=d^\infty(u) =d(u)$, i.e., $q=d(u) - d(v)$, which, by definition of $d$, is exactly the imbalance of $P$; hence $P$ is good.
    
    Consider now the case $u>\ell$. 
    Then $d(v)+q=d^\infty(u) = d(u-\ell) + d(\ell)$, and furthermore, $u-\ell < v$. Hence, by applying again the definition of $d$, we have:
    \[q = d(\ell)-(d(v)-d(u-\ell) ) = \delta(\{0,\dots, 2\ell-1\}) - \delta(\{2(u-\ell),\dots,2v-1\}) = \delta(P).\qedhere\]
\end{proof}

Notice that Lemma \ref{lem:curves} guarantees the existence of an intersecting pair $(u,v)$ for $(d^\infty, d+q)$, and Observation \ref{cl:d_int} guarantees that $u,v$ are integer. 
Hence, good paths always exist. The next theorem shows that this already brings us very close to proving Theorem \ref{thm:cycle_choice}. 

\begin{thm}\label{thm:cycles_no_choice}
   Let $G$ and $(k_R,k_B)$ be as in Theorem \ref{thm:cycle_choice}, and assume that $P$ is a good path.  Then there is a matching $M$ of size $\ell-1$ with exactly $k_R$ red edges and $k_B$ or $k_B-1$ blue edges.
\end{thm}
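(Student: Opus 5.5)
Given a good path $P=\{2v,\dots,2u-1\}$, I build $M$ by taking the odd edges of $P$ and the even edges outside $P$, and then repair the two junctions.

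First, the count. Since $\delta(P)=q=(k_R-r_0,k_B-b_0)$, the set $M':=(M_0\setminus P_0)\cup P_1$ satisfies
\[
p_{M'}=p_{M_0}-(|P_0\cap R|,|P_0\cap B|)+(|P_1\cap R|,|P_1\cap B|)=p_{M_0}+\delta(P)=(k_R,k_B).
\]
It also has exactly $\ell$ edges, since $P$ is even and $|P_0|=|P_1|$. So $M'$ has the right colour counts but is in general not a matching.

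Second, find the conflicts. $P$ starts at the even edge $2v$ and ends at the odd edge $2u-1$. So $P_1$ consists of edges $2v+1,2v+3,\dots,2u-1$, and the even edges outside $P$ are $2u,2u+2,\dots,2v-2$. The first odd edge $2v+1$ of $P$ is not adjacent to $2v-2$, so there is no conflict at the start of $P$. The last odd edge $2u-1$ of $P$ is incident to the even edge $2u$, which lies outside $P$ because $P\neq G$ (it has length $<2\ell$). Thus $M'$ has exactly one pair of adjacent edges, $\{2u-1,2u\}$, and removing one of them gives a matching of size $\ell-1$. Which one we remove decides the result.

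Third, the case analysis on the colours of edges $2u-1$ and $2u$.
\begin{itemize}
\item If either edge is yellow, remove it. The colour counts stay $(k_R,k_B)$.
\item If either edge is blue, remove it. We get $(k_R,k_B-1)$, which is allowed.
\item The only remaining case is that both edges are red.
\end{itemize}

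Fourth, the obstacle: both $2u-1$ and $2u$ are red. Under Assumption \ref{ass:no_balanced} two consecutive red edges form a balanced path, so this case cannot occur, and the argument would be complete. The catch is that Theorem \ref{thm:cycles_no_choice} may be intended without Assumption \ref{ass:no_balanced}. In that case I would fall back on the following.
\begin{itemize}
\item \emph{Shift the path.} Consider $P'=\{2v+2,\dots,2u+1\}$. Since $2u$ and $2u+1$ are consecutive, $\delta(P')=\delta(P)-\delta(\{2v,2v+1\})+\delta(\{2u,2u+1\})$. If $\{2u-1,2u\}$ is a red repetition, this shifted path may again be good. Alternatively, apply the reduction of Lemma \ref{lem:balanced} by contracting the RR pair.
\item \emph{Use the other junction.} Choose $M''=(M_1\setminus P_1)\cup P_0$ instead, which has $p_{M''}=p_{M_1}-\delta(P)$, and reason about the complementary path.
\end{itemize}
I expect the argument to work under the standing assumptions. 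The main point to verify carefully is that the conflict is exactly one adjacent pair and that a red–red junction is excluded. The algorithmic part is immediate, since good paths can be enumerated over the $O(\ell^2)$ choices of $(u,v)$.
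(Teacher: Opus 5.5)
Your proposal is correct and follows essentially the same route as the paper. The quasi-matching formed by the odd edges of $P$ and the even edges outside $P$ has exactly $k_R$ red and $k_B$ blue edges, its only conflict is the pair $2u-1,2u$, and one of these two edges is non-red by Assumption~\ref{ass:no_balanced}, which is already in force at this point. The fallback paragraph is therefore unnecessary and can be dropped; in any case, its second bullet as written yields the color vector $p_{M_1}-q$ rather than $(k_R,k_B)$.
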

\begin{proof}
Let $P=\{2v,\dots, 2u-1\}$ with $v< u< v+\ell$. 
Let us denote by $r_0^{\rm in}$, $r_1^{\rm in}$ the number of even and odd red edges of $P$, respectively, and by $r_0^{\rm out}$, $r_1^{\rm out}$ the number of even and odd red edges outside $P$. Define similarly $b_0^{\rm in}$, $b_1^{\rm in}$, $b_0^{\rm out}$, $b_1^{\rm out}$. Considering the ``red'' component, we have
\[
k_R-r_0 = q_R=r_1^{\rm in}-r_0^{\rm in},
\]
where the second equality follows from the fact that $P$ is a good path and thus satisfies $\delta(P)=q$. As $r_0=r_0^{\rm in} + r_0^{\rm out}$, we obtain
$r_1^{\rm in}+r_0^{\rm out} = k_R.$ 
Similarly, $b_1^{\rm in}+b_0^{\rm out} = k_B$.
Thus, the set $M$ consisting of the odd edges in $P$ and the even edegs not in $P$ has $k_R$ red edges and $k_B$ blue edges. Working modulo $2\ell$, we can write $M=\{2v+
1,2v+3,\dots, 2u-1\}\cup \{2u,2u+2, \dots, 2v-2\}$. Note that $|M|=\ell$, and $M\setminus\{2u-1\}$, $M\setminus\{2u\}$ are both matchings. (We call such a set $M$ a \emph{quasi-matching}.) Hence, by removing from $M$ one of the edges $2u-1$, $2u$ that is not red (there exists at least one by Assumption \ref{ass:no_balanced}), we obtain a matching of size $\ell-1$ with exactly $k_R$ red edges and $k_B$ or $k_B-1$ blue edges.
\end{proof}

We remark that Theorem \ref{thm:cycles_no_choice} gives a weaker version of Theorem \ref{thm:cycle_choice}, where we allow our matching to have one less blue edge than required also when $Y\ne\emptyset$. One easily sees that this is still sufficient to prove Theorem \ref{thm:cycle_paths_choice}. 
Moreover, this weaker version of Theorem \ref{thm:cycle_choice} leads to a weaker version of our main result (Theorem \ref{thm:main}), where we obtain a matching with at most two fewer blue edges than required (i.e., $k_B-2$, $k_B-1$ or $k_B$ blue edges), while satisfying the other properties. This will be explained at the end of Section \ref{sec:general}.

In the next section, we take the last step to prove Theorem \ref{thm:cycle_choice}.

\subsection{Proof of Theorem \ref{thm:cycle_choice}: Exploiting a crossing}\label{sec:cycle_crossing} 

Recall that out goal is to show the existence of a matching of size $\ell-1$ with $k_R$ red edges and $k_B$ blue edges, which we will refer to as a \emph{desired} matching. 

We first show that, in certain special cases, the proof of Theorem \ref{thm:cycles_no_choice} already gives us a desired matching. 

\begin{obs}
Let $P=\{2v,\dots, 2u-1\}$ be a good path in $G$. If at least one of the edges $2u-1$, $2u$, $2v-2$, $2v+1$ is yellow, then a desired matching exists.
\end{obs}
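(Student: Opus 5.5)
The plan is to reuse the quasi-matching from the proof of Theorem~\ref{thm:cycles_no_choice} and handle the four listed edges in two pairs. For a good path $P=\{2v,\dots,2u-1\}$, let
\[
M=\{2v+1,2v+3,\dots,2u-1\}\cup\{2u,2u+2,\dots,2v-2\}.
\]
That proof shows $|M|=\ell$ and $p_M=(k_R,k_B)$ exactly. The only conflict in $M$ is the pair of incident edges $2u-1,2u$. Around $v$, the edges $2v-1,2v$ are both absent from $M$, so $2v-2$ and $2v+1$ are not incident.

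The first case is easy. If $2u-1$ or $2u$ is yellow, delete that yellow edge from $M$. The result is a matching of size $\ell-1$. Deleting a yellow edge changes neither color count, so it has exactly $k_R$ red and $k_B$ blue edges, i.e.\ it is a desired matching.

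For the case where $2v-2$ or $2v+1$ is yellow, I would first try to move the conflict from $u$ to $v$ by choosing a different quasi-matching with the same color counts, and then delete the yellow edge as above. The natural tool is a relabeling symmetry. Renumbering by an even shift keeps $M_0$, $M_1$ and $q$ unchanged. Reversing the cycle via $j\mapsto 2\ell-1-j$ swaps parities, so $q$ becomes $(k_R-r_1,k_B-b_1)$ and the complementary path $Q=\{2u,\dots,2v-1\}$ becomes good, since $\delta(Q)=d(\ell)-q$. The catch is that, after reversal, the critical pair of $Q$ is again $\{2u-1,2u\}$ in the old labels, while its quasi-matching is the complement of $M$. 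That complement has counts $p_{M_0}+p_{M_1}-(k_R,k_B)$, with the conflict now at $2v-1,2v$. So the symmetry relocates the conflict but corrupts the counts, and it does not settle this case on its own.

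I therefore expect the $v$-side case to be the main obstacle. There I would try a local analysis of the imbalance curve. A yellow edge $2v+1$ forces the move $d(v+1)-d(v)$ to be $(-1,0)$ or $(0,-1)$, according to whether edge $2v$ is red or blue. A yellow edge $2v-2$ forces $d(v)-d(v-1)\in\{(1,0),(0,1)\}$. Using the relation $d^\infty(u)=d(v)+q$ together with these forced moves, I would look for an exchange of the form
\[
M\sm\{2v+1\}\cup\{2v\}\quad\text{(or the analogous exchange with }2v-2,2v-1\text{)},
\]
combined with the corresponding adjustment at $u$, so that the color change from the new edge at $v$ is exactly compensated by removing a non-yellow edge among $2u-1,2u$ of the same color. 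Equivalently, I would look for a second good path, obtained by shifting one endpoint of $P$ by one step, whose $u$-side critical edge is the yellow one, and then apply the first case. This step is where I would need to check the finitely many color configurations of the edges $2v-2,\dots,2v+1$ and $2u-1,2u$, using Assumption~\ref{ass:no_balanced} (a proper coloring with no balanced paths) to exclude the configurations in which the counts cannot be matched.
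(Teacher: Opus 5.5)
Your first case (a yellow edge among $2u-1,2u$) is correct and matches the paper's argument. The gap is in the second case. You do write down the right exchange: $M\setminus\{2v+1\}\cup\{2v\}$, followed by deleting the edge of $\{2u-1,2u\}$ that has the color of $2v$. But you leave this as ``the main obstacle'', to be settled by a configuration check on the imbalance curve in which you expect some configurations to need excluding. As it stands, nothing guarantees that an edge of the required color exists among $2u-1,2u$, so this case is not proved.

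The missing observation is immediate from the properness of the coloring (Assumption~\ref{ass:no_balanced}). Edge $2v$ is adjacent to the yellow edge $2v+1$, so it is red or blue. You may assume $2u-1,2u$ are not yellow (otherwise the first case applies), so, being adjacent, one of them is red and the other is blue. Hence the compensating edge always exists, and no configuration has to be excluded.

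The rest is routine.
\begin{itemize}
\item The edges adjacent to $2v$ are $2v-1$ (odd and outside $P$, hence not in $M$) and $2v+1$ (just removed). The deletion at $u$ removes the only conflict of $M$. So you obtain a matching of size $\ell-1$.
\item The yellow deletion is color-neutral, and adding $2v$ is cancelled by deleting the same-colored edge. So the counts stay $(k_R,k_B)$.
\item If $2v-2$ is yellow, swap it for $2v-1$ in the same way; the other neighbor of $2v-1$, namely $2v$, is not in $M$.
\item The degenerate overlaps $u=v+1$ (where $2v+1=2u-1$) and $u=v+\ell-1$ (where $2v-2\equiv 2u$) are already covered by the first case.
\end{itemize}
The forced moves of $d$ you compute are not needed.

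Two remarks on parts you set aside. First, your ``equivalent'' reformulation via a shifted good path is not equivalent. The set $M\setminus\{2v+1\}\cup\{2v\}$ is exactly the quasi-matching of $\{2v+2,\dots,2u-1\}$. That path has imbalance $q+e$, where $e$ is the unit vector of the color of $2v$, so it is not good, and the edge deleted afterwards is red or blue, not yellow. Second, in the reversal detour, the quasi-matching attached to $Q$ in the reversed numbering consists of the old even edges outside $P$ and the old odd edges in $P$. So it is $M$ itself, not its complement. Consistently, its counts must be $(k_R,k_B)$ by the argument of Theorem~\ref{thm:cycles_no_choice}. That is the real reason this symmetry gives nothing new, rather than a corruption of the counts.
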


\begin{proof}
    We refer to the proof of Theorem \ref{thm:cycles_no_choice}, and recall that $M$ is the quasi-matching obtained in the proof. We see that if one of the edges $2u-1, 2u$ is yellow, the matching obtained from $M$ by removing that edge has $k_B$ blue edges and is thus a desired matching.

    Assume now that edges $2u-1, 2u$ are not yellow, and edge $2v+1$ is yellow. Then, thanks to Assumption \ref{ass:no_balanced}, one of the edges $2u-1, 2u$ is red and the other is blue, and edge $2v$ is red or blue. Starting from $M$ and exchanging edge $2v+1$ with edge $2v$, and removing the edge among $2u-1$, $2u$ that has the same color as $2v$, yields a desired matching. We can reason analogously if $2v-2$ is yellow. 
\end{proof}

The above observation allows us to make the following assumption:

\begin{ass}
  \label{ass:good_path} 
    For any good path $P=\{2v,\dots, 2u-1\}$ of $G$, 
     none of the edges $2u-1$, $2u$, $2v-2$, $2v+1$ is yellow.  
\end{ass}


We now resume our investigation of the imbalance curve $d$.
 Due to Assumption \ref{ass:no_balanced}, we can restrict the behavior of $d$: the following can be easily checked via Table \ref{tab:arrows}.
 
\begin{obs}\label{obs:exclude_moves}
We can exclude the following set of two consecutive moves for $d$:
 \[\YB\RY\qquad \BY\YR \qquad \YRRB\qquad \BRRY \qquad  \YB\BR\qquad  \BY\RB\]
\end{obs}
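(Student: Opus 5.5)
The plan is to translate each pair of consecutive moves of $d$ back into the colors of the four underlying edges via Table \ref{tab:arrows}, and then find, in each listed pair, two adjacent edges of the same color. Two consecutive moves of $d$, namely $d(t)-d(t-1)$ and $d(t+1)-d(t)$, are determined by the colors of the four consecutive edges $2t-2,2t-1,2t,2t+1$. The first move fixes the colors of $2t-2,2t-1$, and the second fixes the colors of $2t,2t+1$. The pair of adjacent edges $2t-1,2t$ sits at the junction of the two moves. Assumption \ref{ass:no_balanced} says the coloring is proper, i.e., two adjacent edges never share a color. So it suffices to check that, in each of the six listed pairs, the last edge of the first move and the first edge of the second move have the same color.

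Reading off Table \ref{tab:arrows}, the check goes as follows:
\begin{itemize}
\item \YB\ means edges $(Y,B)$ and \RY\ means $(R,Y)$; the junction edges are $B,R$. This does not immediately violate properness, so for this case and its analogues I will look more carefully below.
\end{itemize}

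Before doing that, I would reconsider the orientation of the arrow glyphs. The macro \RY\ draws an arrow from $(0.3,0)$ to $(0,0)$, i.e., a leftward move, which corresponds to $(R,Y)$. The macro \YR\ draws a rightward move, $(Y,R)$. Similarly, \BY\ is downward, $(B,Y)$, and \YB\ is upward, $(Y,B)$. The macro \RB\ is up-left, $(R,B)$, and \BR\ is down-right, $(B,R)$. With this reading the junction colors are:
\begin{itemize}
\item \YB\RY: junction $B,R$.
\item \BY\YR: junction $Y,Y$.
\item \YRRB: junction $R,R$.
\item \BRRY: junction $R,R$.
\item \YB\BR: junction $B,B$.
\item \BY\RB: junction $Y,R$.
\end{itemize}
Four of the six pairs therefore contain two adjacent edges of equal color and are excluded directly by properness.

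The remaining pairs, \YB\RY\ (edges $Y,B,R,Y$) and \BY\RB\ (edges $B,Y,R,B$), are not excluded by properness alone. For these I will use the full strength of Assumption \ref{ass:no_balanced}, namely that there are no balanced even paths. In \YB\RY, the four-edge path $Y,B,R,Y$ has even edges $Y,R$ and odd edges $B,Y$. Its imbalance is $(1,0)+(-1,\,\cdot)$ summed over the two moves, i.e., $\delta=(0,1)+(-1,0)=(-1,1)\neq 0$. So this path is not balanced, and this case is \emph{not} excluded as a path by itself.

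This is the step I expect to be the obstacle. The likely resolution is that the pair is meant to be read as two moves whose sum is zero, i.e., a move immediately undone. Under this reading the four edges form a balanced path, excluded by Assumption \ref{ass:no_balanced}. Concretely, I would therefore verify the claim in the form: each listed pair either has equal adjacent junction colors, or has total displacement $(0,0)$. In the latter case the path $2t-2,\dots,2t+1$ has $\delta=0$, so it is balanced, which contradicts Assumption \ref{ass:no_balanced}. If a pair fails both tests under my reading of the glyphs, I would re-read that glyph's orientation, since the figures' arrow directions are the only source of ambiguity.
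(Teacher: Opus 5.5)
Your strategy is the paper's: two consecutive moves fix the colors of edges $2t-2,\dots,2t+1$ via Table~\ref{tab:arrows}, and a pair is excluded when the junction edges $2t-1,2t$ share a color, which contradicts properness. This correctly disposes of \BY\YR, \YRRB, \BRRY{} and \YB\BR.

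The gap is in \YB\RY{} and \BY\RB. Read left to right, they are $\uparrow$ then $\leftarrow$ (edges $Y,B,R,Y$) and $\downarrow$ then $\nwarrow$ (edges $B,Y,R,B$). These are properly colored, contain no balanced path, and are genuinely admissible. In fact, Case~\ref{case:simple} lists $(\uparrow,\leftarrow)$ among the possible $(d_\inn,d_\out)$ and $(\downarrow,\nwarrow)$ among the possible $(\bar d_\inn,\bar d_\out)$. So under your reading no argument can exclude them.

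Your proposed remedy also fails. The displacements are $(-1,1)$ and $(-1,0)$, not zero. Moreover, the zero-displacement test can never add anything. A balanced four-edge path $c_1c_2c_3c_4$ needs $\{c_1,c_3\}=\{c_2,c_4\}$, which forces $c_1=c_2$ (a zero move) or $c_2=c_3$ (a monochromatic junction). Hence, for two consecutive moves, Assumption~\ref{ass:no_balanced} gives nothing beyond properness. Re-orienting a single glyph does not help either: $\uparrow\rightarrow$, $\downarrow\leftarrow$, $\uparrow\nwarrow$ and $\downarrow\searrow$ are all admissible.

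The missing point is how to read the pictures: each one shows the shape of the curve, with the two arrows joined head to tail. It is not a left-to-right sequence.
\begin{itemize}
\item In \YB\RY{} the leftward arrow ends at the foot of the upward one. So the curve moves $\leftarrow$ and then $\uparrow$, i.e., edges $R,Y,Y,B$.
\item In \BY\RB{} the up-left arrow ends at the top of the downward one. So the curve moves $\nwarrow$ and then $\downarrow$, i.e., edges $R,B,B,Y$.
\end{itemize}
Both junctions are monochromatic ($Y,Y$ and $B,B$), so properness excludes them exactly as it does the other four. With this reading, the six pictures are precisely the six non-reversing pairs with a monochromatic junction. The six reversals, where a move is followed by its opposite, are excluded for the same reason.
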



We now consider the curve $d^\infty$, i.e., the extension of $d$ by periodicity defined according to \eqref{eq:f^inf} (with $\tau=\ell$).
We have the following simple observation:

\begin{obs}\label{obs:renumbering}
   Let $\hat G$ be the graph isomorphic to $G$ under the following renumbering of the edges: edge 2 becomes edge 0, edge 3 becomes edge 1, and so on. Let $\hat  d$ be the corresponding imbalance curve. Then $\Im(\hat d^\infty)$ is a translation of $\Im(d^\infty)$. 
\end{obs}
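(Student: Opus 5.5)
We prove Observation~\ref{obs:renumbering} by writing $\hat d^\infty$ explicitly in terms of $d^\infty$.

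Under the renumbering, edge $j$ of $\hat G$ is edge $j+2$ of $G$, with indices modulo $2\ell$. Since the shift is by an even amount, parity is preserved, so the even (resp. odd) edges of $\hat G$ are exactly the even (resp. odd) edges of $G$. The imbalance $\delta$ of a path is therefore the same whether the path is read in $G$ or in $\hat G$. It follows that, for integer $t\in\{0,\dots,\ell\}$,
\[
\hat d(t)=\delta(\{2,\dots,2t+1\})=d^\infty(t+1)-d^\infty(1),
\]
where the edge indices are taken modulo $2\ell$.

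To justify this formula, first take $t\le \ell-1$. Then $\{2,\dots,2t+1\}$ consists of original edges and $d(t+1)-d(1)$ is exactly its imbalance, by \eqref{eq:d-delta} summed over consecutive moves. Next take $t=\ell$. The path wraps around, and its imbalance is $d(\ell)-d(1)+d(1)=d(\ell)$. On the other hand, \eqref{eq:f^inf} gives $d^\infty(\ell+1)=d(\ell)+d(1)$. So the formula again reads $\hat d(\ell)=d(\ell)$, consistent with $\hat d(\ell)=\hat p_{M_1}-\hat p_{M_0}=d(\ell)$.

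More uniformly, the increments of $\hat d$ are the increments of $d^\infty$ shifted by one step: $\hat d(t)-\hat d(t-1)=d^\infty(t+1)-d^\infty(t)$. This holds because the move between $t-1$ and $t$ depends only on the colors of edges $2t-2,2t-1$ of $\hat G$, i.e. edges $2t,2t+1$ of $G$ modulo $2\ell$. Together with the fact that $d^\infty$ has the periodic increments of $d$, this gives the identity at all integer points.

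Next we extend the identity to all real $t$. Both $\hat d^\infty$ and $t\mapsto d^\infty(t+1)-d^\infty(1)$ are piecewise linear with breakpoints only at integers, as in Observation~\ref{cl:d_int}. Both agree at $0$ and have the same increments at every integer step: for $\hat d^\infty$ the increments repeat with period $\ell$ and total $\hat d(\ell)=d(\ell)$. Hence
\[
\hat d^\infty(t)=d^\infty(t+1)-d^\infty(1)\quad\text{for all }t\in\R.
\]
Consequently $\Im(\hat d^\infty)=\Im(d^\infty)-d(1)$, since $t\mapsto t+1$ is a bijection of $\R$. This shows $\Im(\hat d^\infty)$ is the translation of $\Im(d^\infty)$ by $-d(1)$.

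The only delicate point is the wrap-around bookkeeping: checking that \eqref{eq:f^inf} matches the cyclic reading of edges past $2\ell-1$. This is handled by comparing increments rather than values.
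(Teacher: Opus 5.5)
Your proof is correct and takes the same route as the paper. The paper simply asserts that $\hat d^\infty(t)=d^\infty(t+1)-d(1)$ for all $t\in\R$ and concludes. Your check of this identity, via matching increments at integer points (including the wrap-around step) and then extending by piecewise linearity, fills in the details the paper leaves as easy to check.
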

    \begin{proof}
        Using the definition of $d$, it is easy to check that $\hat d^\infty(t)=d^\infty(t+1)-d(1)$ for any $t\in \R$. Due to the periodicity of $d^\infty$ and $\hat d^\infty$, this concludes the proof. 
    \end{proof}

We now prove an important lemma that will allow us to prove Theorem \ref{thm:cycle_choice}. 
Recall that $q\in\R^2$ is defined as $q=(k_R-r_0,k_B-b_0)$.

\begin{lemma}\label{lem:crossing_for_d}
Under Assumptions \ref{ass:open}--\ref{ass:good_path}, and possibly after renumbering the edges of the cycle, we have that:
    \begin{enumerate}
        \item 
        $d^\infty$ is injective.
        \item 
        There exists a crossing pair $(u,v)\in \Z^2$ for $(d^\infty,d+q)$ such that $v < u < v+\ell$.
        \item 
        If the crossing is overlapping, one of the following holds: \begin{enumerate}
            \item there is some $i \in\{1,\dots,v-1\}$ such that  $d^\infty([u-i,u])$ and $d([v-i,v])+q$ form precisely the overlapping part of the crossing;
            \item there is some $i \in \left\{1,\dots,\min\left\{v-1,\ceil{\frac{v-u+\ell}{2}}-1\right\}\right\}$ such that  $d^\infty([u,u+i])$ and $d([v-i,v])+q$ form precisely the overlapping part of the crossing.
        \end{enumerate}
    \end{enumerate}  
\end{lemma}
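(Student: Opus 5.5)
Three things have to be established: injectivity of $d^\infty$, existence of a crossing pair via the Crossing Lemma, and the structure of the overlap. I will treat them in that order.

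\emph{Injectivity of $d^\infty$.} Suppose $d^\infty(t_1)=d^\infty(t_2)$ with $t_1<t_2$. By Observation~\ref{cl:d_int}, applied to integer translates of $d$ (every piece of $d^\infty$ is such a translate), we may take $t_1,t_2$ to be integers. If $t_2-t_1$ is a multiple of $\ell$, then $d(\ell)=(0,0)$, so $G$ is balanced, which contradicts Assumption~\ref{ass:open}. Otherwise, set $a=t_1 \bmod \ell$ and $b=t_2 \bmod \ell$. The path of edges $2a,\dots,2t_2-1$ taken modulo $2\ell$ (or its complementary arc) has imbalance $(0,0)$, hence is balanced, contradicting Assumption~\ref{ass:no_balanced}. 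I will verify this via the identity $d^\infty(t_2)-d^\infty(t_1)=\delta(\{2t_1,\dots,2t_2-1\})$, which holds whenever $t_2-t_1<\ell$. When $t_2-t_1>\ell$, I subtract $d(\ell)$ a suitable number of times and use the complement. This needs care, because a difference equal to $k\,d(\ell)$ with $k\neq 0$ is not immediately excluded. I expect to handle it by noting that $d(\ell)\neq 0$, and then projecting onto a direction $w$ with $w\cdot d(\ell)>0$. This lets me reduce to a window of length less than $\ell$, possibly after choosing $t_1$ at a point of $d^\infty$ where $w\cdot d^\infty$ attains its minimum over a period. This reduction is also where the ``after renumbering'' freedom of Observation~\ref{obs:renumbering} would be used, since it changes the image only by a translation.

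\emph{Existence of the crossing pair.} Apply the Crossing Lemma~\ref{lem:crossing} with $f=d$ and $\tau=\ell$. Then $q-d(0)=q$, and $q$ lies on the open segment from $d(0)$ to $d(\ell)$. It remains to check that $q\notin\Im(d)$. Suppose instead that $q=d(t)$. Then $q\in\Z^2$, so $t$ is an integer by Observation~\ref{cl:d_int}, and the path $\{0,\dots,2t-1\}$ is good. Renumbering (Observation~\ref{obs:renumbering}) only translates the picture. We want to renumber so that the hypothesis holds. If no renumbering achieves this, every shift $s$ makes $\{2s,\dots\}$ produce a good path starting at $2s$. I would then combine these good paths with Assumption~\ref{ass:good_path} to derive a contradiction, for instance by showing that the edges adjacent to all the endpoints are non-yellow everywhere, contradicting Assumption~\ref{ass:Y}. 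This gives a crossing pair $(u,v)$ with $v<u<v+\ell$. Both $u$ and $v$ are integers because the intersection happens at a point of $\Z^2$ and Observation~\ref{cl:d_int} applies.

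\emph{Structure of an overlapping crossing.} Both curves move along the lattice by steps from Table~\ref{tab:arrows}. The overlapping part $d([s,v])+q=d^\infty(\cdot)$ is therefore a common lattice path. Since $d^\infty$ is injective, that lattice path is traversed by $d^\infty$ either in the same direction, giving case (a) with the interval $[u-i,u]$, or in the opposite direction, giving case (b) with the interval $[u,u+i]$. The bound $i\le v-1$ holds because $s>0$. The extra bound $i\le\lceil (v-u+\ell)/2\rceil-1$ in case (b) should follow from the fact that an opposite-direction overlap of length $i$ yields a palindromic-type color pattern. Combined with Observation~\ref{obs:exclude_moves}, the overlap cannot pass the midpoint of the complementary arc of length $v-u+\ell$ without producing a balanced subpath. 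I expect the main obstacle to be this last bound together with the verification that $q\notin\Im(d)$ after renumbering.
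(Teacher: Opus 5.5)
There is a genuine gap in part~1, for coincidences $d^\infty(t_1)=d^\infty(t_2)$ with $t_2-t_1>\ell$. Write $\Delta=d(\ell)$ and $t_2=t_1+m\ell+r$ with $m\ge 1$ and $0<r<\ell$. The arc of $r$ moves starting at $t_1$ then has imbalance $-m\Delta$, so its complementary arc is a path $P$ with $\delta(P)=(m+1)\Delta$. Already $m=1$ gives $\delta(P)=2\Delta$, and Assumptions~\ref{ass:open} and~\ref{ass:no_balanced} do not exclude this directly.

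Your projection device cannot close this case. The function $h=w\cdot d^\infty$ only satisfies $h(t+\ell)=h(t)+w\cdot\Delta$. Within one period, $h$ may oscillate with amplitude much larger than $w\cdot\Delta$, for instance when $d$ first makes a long excursion roughly in direction $-\Delta$ and then returns. So $h(t_1)=h(t_2)$ is compatible with $t_1,t_2$ lying several periods apart, wherever you start the period. A self-intersection of $d^\infty$ across periods is a planar phenomenon, and no one-dimensional shadow produces a window of length less than $\ell$.

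The missing idea is a descent on $k$ driven by Lemma~\ref{lem:curves}.
\begin{enumerate}
\item Let $k\ge 2$ be minimal such that some path $P$ starting at an even edge has $\delta(P)=k\Delta$. The values $k=0,1$ are excluded because $P$ or its complement would be balanced. A negative $k$ becomes $1-k\ge 2$ by passing to the complement.
\item Renumber so that $P=\{0,\dots,2\ell'-1\}$ (Observation~\ref{obs:renumbering}).
\item Apply Lemma~\ref{lem:curves} to $\tilde d=d|_{[0,\ell']}$ with the point $\Delta$, which lies in the open segment between $\tilde d(0)=(0,0)$ and $\tilde d(\ell')=k\Delta$. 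This gives integers $j<i<j+\ell'$ with $\tilde d^\infty(i)=\tilde d(j)+\Delta$.
\item If $i\le\ell'$, the path $\{2j,\dots,2i-1\}$ has imbalance $\Delta$, so its complement in $G$ is balanced.
\item If $i>\ell'$, then $d(j)-d(i-\ell')=(k-1)\Delta$ with $i-\ell'<j$. This contradicts the choice of $k$, or, if $k-1=1$, again yields a balanced complement.
\end{enumerate}

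The rest of your plan is sound. Your argument for $q\notin\Im(d)$ is the contrapositive of the paper's choice of a numbering in which edge $1$ or edge $2\ell-2$ is yellow. If every even shift gave a good path starting at edge $2s$, then edges $2s-2$ and $2s+1$ would be non-yellow for all $s$, so $Y=\emptyset$.

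Your midpoint argument for the extra bound in case~(b) can be made precise without Observation~\ref{obs:exclude_moves}. Suppose $2i\ge v-u+\ell$. Either $u+j=v-j+\ell$ for some $j\le i$, which forces $q=d(\ell)$, excluded by Assumption~\ref{ass:open}. Or the two parameters straddle a single step, which forces that move to equal its own negative, impossible since moves are nonzero.
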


Note that the injectivity of $d^\infty$ stated in property 1 of the above lemma ensures that the notion of a crossing pair is well defined. Furthermore, recall that, by definition of crossing pair, property 2 also gives the conditions $0<v<\ell$ and $d^\infty(u) = d(v)+q$.

The two subcases of property 3 correspond to overlapping crossings where $d^\infty$ and $d+q$ have (a) the same orientation or (b) opposite orientations in the overlapping part. In case (a), $d^\infty(u-j) = d(v-j)+q$ for all $j\in\{0,\dots, i\}$, while in case (b), $d^\infty(u+j) = d(v-j)+q$ for all $j\in\{0,\dots, i\}$. In both cases, $i$ is maximal with respect to this property. See Figure \ref{fig:overlap} for an illustration of these two cases.




\begin{proof}[Proof of Lemma \ref{lem:crossing_for_d}]
 \begin{enumerate}
        \item Assume by contradiction that $d^\infty$ is not injective:  then, as $d^\infty$ is the extension of $d$ by periodicity, there are distinct $u,v\in [0,\ell)$ with 
    $d(u)-d(v)=k\Delta$ for some $k\in \Z_{\geq 0}$, where we write $\Delta = d(\ell)$ for brevity.  Fix a minimum such $k$. Note that $u,v$ can be assumed to be integer by Observation \ref{cl:d_int}.
    \begin{itemize}
        \item If $k=0$, there are integers $u,v\in\{0,\dots,\ell-1\}$ with $u>v$ and $d(u)=d(v)$, leading to $\delta(\{2v,\dots, 2u-1\})=0$. Thus, there is a balanced path, contradicting Assumption \ref{ass:no_balanced}.

\item If $k\geq 1$, there are integers $u,v\in\{0,\dots,\ell-1\}$ with $u\neq v$ and $d(u)=d(v)+k\Delta$. Suppose first that $k=1$ and $u>v$: then we have $\delta(\{2v,\dots, 2u-1\})=\Delta$; since $\delta(\{0,\dots,2\ell-1\})=\Delta$, we obtain $\delta(\{2u,\dots,2\ell-1, 0,\dots, 2v-1\})=0$, contradicting Assumption \ref{ass:no_balanced}. 

In all other cases, there is a path $P$ such that either $\delta(P)=k\Delta$ (if $u>v$ and $k>1$) or $\delta(P)=-k\Delta$ (if $u<v$), in which case the path $Q=E\setminus P$ satisfies $\delta(Q)=(k+1)\Delta$. We only treat the first case, as, by renaming $k:=k+1$ and $P:=Q$, the second case reduces to the first. 
Let us change the numbering of the cycle so that it starts from $P$: its first edge (which is even) becomes edge 0, its second edge becomes edge 1, and so on. By repeatedly applying Observation \ref{obs:renumbering}, one can see that, if $d$ is updated consequently, the only effect of the renumbering is that $d^\infty$ is translated, and therefore the minimality of $k$ is unaffected. 
Let $\tilde d$ be the restriction of (the updated) $d$ to the edges of $P$, i.e., $\tilde d$ is defined on $[0,\ell']$, with $\ell'=|P|/2$, and satisfies $\tilde d(t)=d(t)$ for every $t\in[0,\ell']$. In particular, $\tilde d(0)=d(0)=(0,0)$ and $\tilde d(\ell')=\delta(P)=k\Delta$. 
        Since $\Delta$ is in the open segment between $\tilde d(0)$ and $\tilde d(\ell')$, we can apply Lemma \ref{lem:curves} to $\tilde d$ and $\Delta$:
 we have that there exist $i\in[0,2\ell']$ and $j\in[0,\ell']$ such that  $j<i<j+\ell'$ and 
    $\tilde d^\infty(i) = \tilde d(j)+\Delta$. Again, $i,j$ can be assumed to be integer. Now, we distinguish two cases again. If $i\leq \ell'$, then $d(i)=\tilde d(i)= \tilde d^\infty(i)=\tilde d(j)+\Delta=d(j)+\Delta$, so the path $\{2j,\dots,2i-1\}$ has imbalance equal to $\Delta$ and therefore its complement is a balanced path, contradicting Assumption \ref{ass:no_balanced}. Otherwise, $i>\ell'$ and  $\tilde d(j)+\Delta=\tilde d^\infty(i) = \tilde d(i-\ell') + \tilde d(\ell')-\tilde d(0)= \tilde d(i-\ell') + k\Delta$,
    which gives $ d(j)- d(i-\ell')= \tilde d(j)-\tilde d(i-\ell')= (k-1)\Delta$,
    contradicting the minimality of $k$.
    \end{itemize}
    
    \item Due to Assumption \ref{ass:Y}, we can fix the numbering of the cycle so that at least one of the edges $\ell-2, 1$ is yellow; since these two edges have different parity, this can be done without changing which edges of $G$ are even and which are odd. Now, we verify that $q\notin\Im(d)$. If $q\in\Im(d)$, then, as $q\in\Z^2$, $q=d(t)$ for some $t\in\{0,\dots,\ell\}$. If $q=d(0)=(0,0)$, we obtain $(k_R,k_B)=(r_0,b_0)=p_{M_0}$, contradicting Assumption \ref{ass:open}. Similarly, if $q=d(\ell)=(r_1-r_0,b_1-b_0)$, we find $(k_R,k_B)=p_{M_1}$, again a contradiction.  So  $t\in \{1,\dots,\ell-1\}$, which implies that $P=\{0,\dots, 2t-1\}$ is a good path, contradicting Assumption \ref{ass:good_path}.  Thus, $q\notin\Im(d)$. 
        
 The existence of a (not necessarily integer) crossing pair now follows from Lemma \ref{lem:crossing}. By 
Observation \ref{cl:d_int}, we see that $u, v\in\Z$.
\item If the crossing is overlapping, there are two cases, depending on whether the orientation of $d^\infty$ and $d+q$ in the overlapping part is the same or not. In the first case, we have $d^\infty([u-i, u]) = d([v-i', v])+q$ for some $i,i'>0$ that are breakpoints of $d$, where we take $i,i'$ maximal. Notice that $i'<v$ by definition of crossing pair. Moreover, due to the structure of $d$, we have $i=i' \in \Z_{>0}$, implying $i\in\{1,\dots,v-1\}$. 
In the second case, we have $d^\infty([u+i, u]) = d([v-i', v])+q$ for some $i,i'>0$, and as above we conclude $i=i'\in \Z_{>0}$ and $i\leq v-1$. The bound on $i$ follows from the fact that if $u+j = v-j+\ell$ for some $j\in \{1,\dots,i\}$, then $d(v-j)+q=d^\infty(u+j)= d(v-j)+d(\ell)$, leading to $q=d(\ell)$, a contradiction to part 1. Hence, we must have $u+i<v-i+\ell$, leading to property (b).
        \end{enumerate}

   
\end{proof}

From now on, we will work with a numbering of the edges of $G$ satisfying the conditions of Lemma \ref{lem:crossing_for_d}.


We now start the actual proof of Theorem \ref{thm:cycle_choice}. We first summarize our approach.
The existence of a crossing pair $(u, v)$, together with Lemma \ref{lem:intersecting_pair_good_path}, implies that there is a good path $P=\{2v,\dots,2u-1\}$ with certain properties on the edges close to its extremes. If the crossing is overlapping, there are multiple good paths with additional properties. We will call $d_\inn$ (resp., $d_\out$) the move of $d^\infty$ right before (resp., after) the overlapping part, and we define $\bar d_\inn$ (resp., $\bar d_\out$) similarly for $d+q$ (see below for the precise definitions).

Then, we will distinguish some cases: 
 \begin{enumerate}
    \item\label{case:simple} The crossing is simple;

    \item\label{case:a)} The crossing is overlapping of type (a) (we refer to Lemma \ref{lem:crossing_for_d}), i.e., $d^\infty$ and $d+q$ have the same orientation in the overlapping part.
 \item\label{case:b)} The crossing is overlapping of type (b), i.e., $d^\infty$ and $d+q$ have opposite orientations in the overlapping part. 
\end{enumerate}
(See Figure \ref{fig:overlap} for an illustration of the last two cases.)

\begin{figure}[h]
    \centering     \centering \begin{subfigure}[h]{0.45\textwidth}
      \begin{tikzpicture}[scale=0.45]

    \draw[step=1, lightgray, very thin] (0,0) grid (7,7);

    \draw[->] (0,0) -- (7.5,0) node[right] {$x_R$};
    \draw[->] (0,0) -- (0,7.5) node[above] {$x_B$};

\node[circle,fill=red, inner sep=1.2] (q) at  (2,1){};
\node[right] at (2,1) {$q$};

\node[circle,fill=black, inner sep=1] (dl) at  (4,2){};
\node[right] at (4,2) {$d(\ell)$};
\node[circle,fill=black, inner sep=1] (d0) at  (0,0){};
\node[left] at (0,0) {$d(0)$};

\node[circle,fill=red, inner sep=1] (q') at  (6,3){};

\draw[very thick] (0,0) -- (0,2) -- (2,2) -- (2,3) -- (3,3) -- (4,2);

\draw[very thick] (2,2) -- (2,3);

\draw[red, thick] (q) -- (2,2);
\draw[red, thick] (3,3) -- (4,3) -- (4,4) --(5,4) -- (6,3);
\draw[red, very thick, dotted] (2,2) -- (2,3) -- (3,3);

\node[circle, fill= black, inner sep=2] (o) at (10.5, 4) {};
\def\vecL{1.5}
\draw[->, thick] ($(o)+(-\vecL,0)$) -- (o); 
\draw[->, thick, red] ($(o)+(0,-\vecL)$) -- (o); 
\draw[->, thick] (o) -- ($(o)+(\vecL-0.1,-\vecL+0.1)$);  
\draw[->, thick, red] (o) -- ($(o)+(\vecL,0)$);   

\node[circle, inner sep=2] (o) at (14.5, 4) {};
\end{tikzpicture}  
\caption{}
    \end{subfigure} \hspace{0.5cm} 
    \begin{subfigure}{0.45\textwidth}
        
      \begin{tikzpicture}[scale=0.45]

    \draw[step=1, lightgray, very thin] (0,0) grid (7,7);

    \draw[->] (0,0) -- (7.5,0) node[right] {$x_R$};
    \draw[->] (0,0) -- (0,7.5) node[above] {$x_B$};

\node[circle,fill=red, inner sep=1.2] (q) at  (2,1){};
\node[right] at (2,1) {$q$};

\node[circle,fill=red, inner sep=1] (q') at  (6,3){};

\node[circle,fill=black, inner sep=1] (dl) at  (4,2){};
\node[right] at (4,2) {$d(\ell)$};
\node[circle,fill=black, inner sep=1] (d0) at  (0,0){};
\node[left] at (0,0) {$d(0)$};

\draw[very thick] (0,0) -- (0,4) -- (2,4);

\draw[very thick] (2,4) -- (2,2);

\draw[very  thick] (2,2) -- (4,2);

\draw[red, thick] (q) -- (2,2);
\draw[red, thick] (2,4) -- (2,5) -- (4,5) -- (4,3) -- (6,3);
\draw[red, very thick, dotted] (2,4) -- (2,2);
\node[circle, fill= black, inner sep=2] (o) at (10.5, 4) {};
\def\vecL{1.5}
\draw[->, thick] ($(o)+(-\vecL,0)$) -- (o); 
\draw[->, thick, red] ($(o)+(0,-\vecL)$) -- (o); 
\draw[->, thick] (o) -- ($(o)+(\vecL,0)$);  
\draw[->, thick, red] (o) -- ($(o)+(0,\vecL)$);   

\node[circle, inner sep=2] (o) at (14.5, 4) {};
\end{tikzpicture}
   \caption{}
    \end{subfigure}
    \caption{
    Examples of an overlapping crossing of type (a) (left) and of type (b) (right), where we only show $d$ (black) and $d+q$ (red). In both cases, the crossing pair is $(u,v)=(6,3)$ and the \lq \lq length'' of the overlapping part is $i=2$. At the side of each plot we draw (translates of) the vectors $d_\inn$, $\bar d_\out$, $d_\out$, $\bar d_\inn$, in clockwise order, with $d_\inn= (1,0)$ in both cases.
    }
    \label{fig:overlap}
\end{figure}
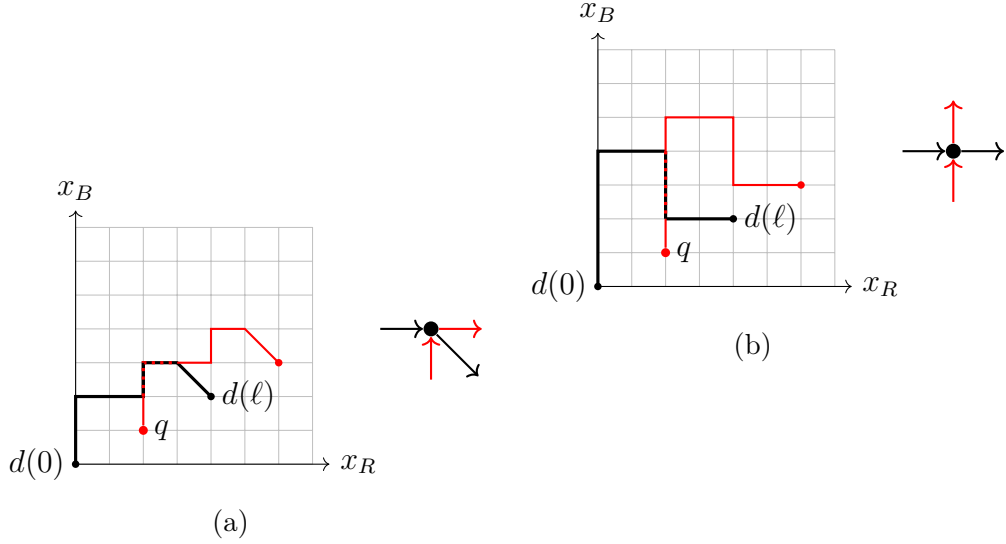

 We will conclude the proof of Theorem \ref{thm:cycle_choice} by arguing that in each case there is a contradiction.

\paragraph{Case \ref{case:simple}: The crossing is simple}
  

 We define \[d_{\inn} := d^\infty(u)-d^\infty(u-1),\quad  
    \bar d_{\inn} := d(v)-d(v-1),
    \]
     \[
d_{\out} := d^\infty(u+1)-d^\infty(u), \quad \bar d_{\out} :=d(v+1)-d(v).
\]
In this case, the four vectors $-d_{\inn}$, $d_{\out}$, $-\bar d_{\inn}$, $\bar d_{\out}$ are all distinct (where we changed the sign of the ``in'' vectors so that all vectors point away from the crossing point $d^\infty(u)=d(v)+q$), and \lq\lq cross'' in the sense that they alternate between vectors $d_{*}$ and $\bar d_{*}$ in, say, clockwise order. 
We will rule out all the cases in which such crossings can occur. Recall that $P=\{2v,\dots, 2u-1\}$ is a good path, by Lemma \ref{lem:intersecting_pair_good_path}. By equation \eqref{eq:d-delta}, and using Assumptions \ref{ass:no_balanced} and \ref{ass:good_path},  
 the possible moves for $d_\inn$ and $d_\out$ are those listed in Table \ref{tab:moves_inout}.
 
 \begin{table}[h]
    \centering
    \begin{subtable}{0.45\textwidth}
        \centering
        \begin{tabular}{c|c|c}
            $2u-2$ & $2u-1$ & \\\hline
            $R$ & $B$ & \RB \\
            $B$ & $R$ & \BR \\
            $Y$ & $B$ & \YB \\
            $Y$ & $R$ & $\rightarrow$
        \end{tabular}
        \caption*{$d_\inn$}
        \label{tab:din}
    \end{subtable}
    \hfill
    \begin{subtable}{0.45\textwidth}
        \centering
        \begin{tabular}{c|c|c}
            $2u$ & $2u+1$ & \\\hline
            $R$ & $B$ & \RB \\
            $B$ & $R$ & \BR \\
            $R$ & $Y$ & $\leftarrow$ \\
            $B$ & $Y$ & \BY
        \end{tabular}
        \caption*{$d_\out$}
        \label{tab:dout}
    \end{subtable}
    \caption{Possible moves for $d_\inn$ and $d_\out$ in Case \ref{case:simple}.}
    \label{tab:moves_inout}
\end{table}

Note also that, again by Assumptions \ref{ass:no_balanced} and \ref{ass:good_path}, $\bar d_\inn$ has the same possible moves as $d_\out$, and $\bar d_\out$  has the same possible moves as $d_\inn$. Moreover, not all combinations for $d_\inn$, $d_\out$ are allowed, due to Observation \ref{obs:exclude_moves}. Thus, we obtain the following cases, which we depict graphically: the black dot represents the crossing point. 

For $(d_\inn, d_\out)$, the possible moves are:
\[
 \tikz[baseline]{\draw[->] (0.8,0) -- (0.5,0.3); \node[pointv] at (0.4, 0.4) {}; \draw[->] (0.3,0.5) -- (0,0.8);}  \quad
 \tikz[baseline]{\draw[<-] (0.8,0) -- (0.5,0.3); \node[pointv] at (0.4, 0.4) {}; \draw[<-] (0.3,0.5) -- (0,0.80);} \quad
  \tikz[baseline]{\draw[<-] (0.4,0) -- (0.4,0.3); \node[pointv] at (0.4, 0.4) {}; \draw[<-] (0.3,0.5) -- (0,0.80);} \quad
 \tikz[baseline]{\draw[->] (0.4,0) -- (0.4,0.3); \node[pointv] at (0.4, 0.4) {}; \draw[->] (0.3,0.5) -- (0,0.80);} \quad
 \tikz[baseline]{\draw[->] (0.8,0) -- (0.5,0.3); \node[pointv] at (0.4, 0.4) {}; \draw[->] (0.3,0.4) -- (-0.05,0.4);} \quad
 \tikz[baseline]{\draw[<-] (0.8,0) -- (0.5,0.3); \node[pointv] at (0.4, 0.4) {}; \draw[<-] (0.3,0.4) -- (-0.05,0.4);} \quad
 \tikz[baseline]{\draw[->] (0.3,0.4) -- (-0.05,0.4); \node[pointv] at (0.4, 0.4) {}; \draw[->] (0.4,0) -- (0.4,0.3);} \quad
 \tikz[baseline]{\draw[<-] (0.3,0.4) -- (-0.05,0.4); \node[pointv] at (0.4, 0.4) {}; \draw[<-] (0.4,0) -- (0.4,0.3);}
\]
Notice that the last two possibilities can be ruled out because they cannot lead to a simple crossing.
For $(\bar d_\inn, \bar d_\out)$, the possible moves are:
\[
 \tikz[baseline]{\draw[->] (0.8,0) -- (0.5,0.3); \node[pointv] at (0.4, 0.4) {}; \draw[->] (0.3,0.5) -- (0,0.8);}  \quad
 \tikz[baseline]{\draw[<-] (0.8,0) -- (0.5,0.3); \node[pointv] at (0.4, 0.4) {}; \draw[<-] (0.3,0.5) -- (0,0.80);} \quad 
 \tikz[baseline]{\draw[->] (0.8,0) -- (0.5,0.3); \node[pointv] at (0.4, 0.4) {}; \draw[->] (0.4,0.5) -- (0.4,0.80);}\quad
 \tikz[baseline]{\draw[<-] (0.8,0) -- (0.5,0.3); \node[pointv] at (0.4, 0.4) {}; \draw[<-] (0.4,0.5) -- (0.4,0.80);}\quad
\tikz[baseline]{\draw[<-] (0.85,0.4) -- (0.5,0.4); \node[pointv] at (0.4, 0.4) {}; \draw[<-] (0.3,0.5) -- (0,0.8);}\quad 
\tikz[baseline]{\draw[->] (0.85,0.4) -- (0.5,0.4); \node[pointv] at (0.4, 0.4) {}; \draw[->] (0.3,0.5) -- (0,0.8);}\quad 
\tikz[baseline]{\draw[<-] (0.85,0.4) -- (0.5,0.4); \node[pointv] at (0.4, 0.4) {}; \draw[<-] (0.5,0.3) -- (0.80,0);}\quad 
\tikz[baseline]{\draw[->] (0.85,0.4) -- (0.5,0.4); \node[pointv] at (0.4, 0.4) {}; \draw[->] (0.5,0.3) -- (0.80,0);}\quad 
\tikz[baseline]{\draw[->] (0.4,0.5) -- (0.4,0.8); \node[pointv] at (0.4, 0.4) {}; \draw[<-] (0.3,0.5) -- (0,0.8);}\quad 
\tikz[baseline]{\draw[<-] (0.4,0.5) -- (0.4,0.8); \node[pointv] at (0.4, 0.4) {}; \draw[->] (0.3,0.5) -- (0,0.8);}
\]
As above, the last four possibilities can be ruled out because they cannot lead to a simple crossing.
Finally, it is easy to check that none of the remaining combinations of moves for $(d_\inn, d_\out)$ and $(\bar d_\inn, \bar d_\out)$ leads to a simple crossing (either because two segments coincide or because it is not a crossing at all). This concludes the proof in this case.

\paragraph{Case \ref{case:a)}: The crossing is overlapping of type (a)}

 By Lemma \ref{lem:crossing_for_d}, $d^\infty([u-i,u])$ and $d([v-i,v])+q$ form precisely the overlapping part of the crossing, i.e., there is some $i\in \{1,\dots,v-1\}$ such that $d^\infty(u-j) = d(v-j)+q$ for all $j\in\{0,\dots, i\}$, where $i$ maximal with respect to this property. 
 Also recall that, due to Lemma \ref{lem:intersecting_pair_good_path}, $P_j = \{2(v-j),\dots, 2(u-j)-1\}$ is a good path for all $j\in\{0,\dots,i\}$. 
  
By subtracting equation $d^\infty(u-j-1) = d(v-j-1)+q$ from $d^\infty(u-j) = d(v-j)+q$ for each $j\in\{0,\dots, i-1\}$, we obtain
\[d^\infty(u-j)-d^\infty(u-j-1)= d(v-j)-d(v-j-1)\] for $j\in\{0,\dots, i-1\}$.
Since moves of $d$ are in one-to-one correspondence with pairs of colors via equation \eqref{eq:d-delta}, this implies that the edges $2(v-j)-1$ and $2(u-j)-1$ have the same color, and so do the edges $2(v-j)-2$ and $2(u-j)-2$, for each $j\in\{0,\dots, i-1\}$. Moreover, by Assumption \ref{ass:good_path}, for every $j\in\{0,\dots, i\}$ edges $2(v-j)-2$, $2(v-j)+1$, $2(u-j)-1$, $2(u-j)$ are not yellow. This, by Assumption \ref{ass:no_balanced}, implies that the edges from $2(v-i)$ to $2v-1$ form an alternating sequence of red and blue; say that the even edges are red and the odd edges are blue, the other case being analogous. Then, the same holds for the edges from $2(u-i)-1$ to $2u$, where, for the first and the last edge, we again used Assumption \ref{ass:no_balanced} along with the fact that they are not yellow. Hence, the overlapping part is diagonal and consists of moves of the type $\RB$, and also each of the edges $2(v-i)-2$, $2v+1$ is red or blue.   
     
     Now, we define \[d_{\inn} := d^\infty(u-i)-d^\infty(u-i-1),\quad  
    \bar d_{\inn} := d(v-i)-d(v-i-1),
    \]
    \[
d_{\out} := d^\infty(u+1)-d^\infty(u), \quad \bar d_{\out} :=d(v+1)-d(v).
\]

By \eqref{eq:d-delta}, $\bar d_\inn$ and $\bar d_\out$ depend on the color of the edges $2(v-i)-2,2(v-i)-1$ and $2v,2v+1$, respectively.
As each of the edges $2(v-i)-2$, $2v+1$ is red or blue, the possible cases for $\bar d_\inn$, $\bar d_\out$ are:
     \[
     \bar d_\inn \in \{\RB, \leftarrow, \BY\}, \quad \bar d_\out \in \{\RB, \YB, \rightarrow\}.
     \]
     (The move $\BR$ is ruled out in both cases by the injectivity of $d^\infty$.)
     Similarly, as edge $2(u-i)-1$ is blue while edge $2u$ is red, the possible cases for $d_\inn$, $d_\out$ are:
     \[
    d_\inn \in \{\RB, \YB\}, \quad d_\out \in \{\RB, \leftarrow\}.
     \]
However, it is easy to check that in any combination of moves, $\bar d_\inn$ and $\bar d_\out$ lie in the same one of the two regions determined by $d^\infty$, contradicting the fact that $(u,v)$ is a crossing pair.
(See Figure \ref{fig:cases2-3}, left.)

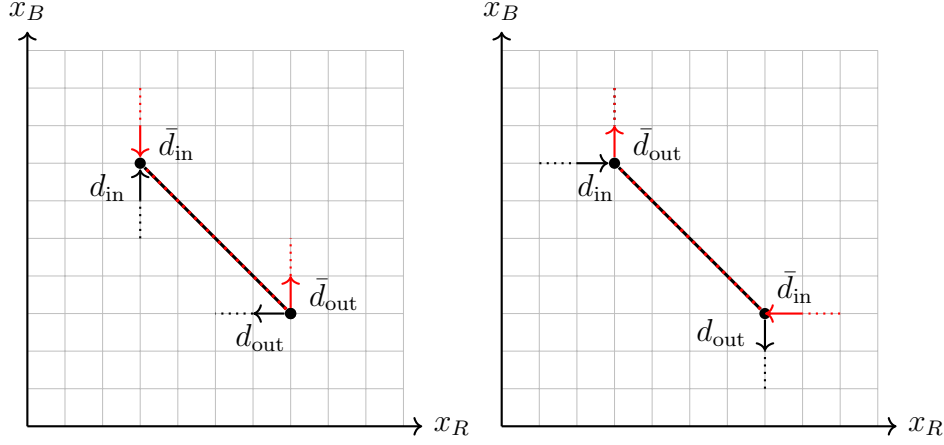
\begin{figure}[h]
    \centering
    \begin{subfigure}{0.45\textwidth}
      \begin{tikzpicture}[scale=0.5]

    \draw[step=1, lightgray, very thin] (0,0) grid (10,10);

    \draw[thick,->] (0,0) -- (10.5,0) node[right] {$x_R$};
    \draw[thick,->] (0,0) -- (0,10.5) node[above] {$x_B$};
    
  \begin{scope}[shift={(1,-1)}]

\node[circle,fill=black, inner sep=1.5] (t0) at  (2,8){};

\node[circle,fill=black, inner sep=1.5] (t0mi) at  (6,4){};
    \node (mid) at (4,6) {};

\draw[very thick] (t0mi) -- (t0);
\draw[very thick, red, dotted] (t0mi) -- (t0);

\draw[thick, dotted, red] (2,10) -- (2,9);
\draw[  <-, red, thick] (t0) -- (2,9);
\node[right] at (2.2,8.5) {$\bar d_\inn$};

\draw[thick, dotted] (5,4) -- (4,4);
\draw[ thick, <-] (5,4) -- (t0mi);
\node[below] at (5.2,4) 
{$d_\out$};

\draw[thick, dotted] (2,6) -- (2,7);
\draw[thick, ->] (2,7) -- (t0);
\node[left] at (1.9,7.35) {$d_\inn$};

\draw[ thick, red, ->] (6,4) -- (6,5);
\node[right] at (6.2,4.5) {$ \bar d_\out$};
\draw[thick,red ,  dotted] (6,5) -- (6,6);

\node[circle,fill=black, inner sep=1.5] (t0mi) at  (6,4){};

\end{scope}

\end{tikzpicture}
 \end{subfigure}
 \begin{subfigure}{0.45\textwidth}
    \begin{tikzpicture}[scale=0.5]

    \draw[step=1, lightgray, very thin] (0,0) grid (10,10);

    \draw[thick,->] (0,0) -- (10.5,0) node[right] {$x_R$};
    \draw[thick,->] (0,0) -- (0,10.5) node[above] {$x_B$};

  \begin{scope}[shift={(1,-1)}]
\node[circle,fill=black, inner sep=1.5] (t0) at  (2,8){};

\draw[thick, dotted] (2,10) -- (2,9);

\node[circle,fill=black, inner sep=1.5] (t0mi) at  (6,4){};
    \node (mid) at (4,6) {};

\draw[very thick] (t0mi) -- (t0);
\draw[very thick, red, dotted] (t0mi) -- (t0);

\draw[thick,red,->] (t0) -- (2,9);
\node[right] at (2.2,8.5) {$\bar d_\out$};

\draw[thick, dotted] (6,2) -- (6,3);
\draw[thick,->] (t0mi) -- (6,3);
\node[left] at (5.8,3.5) {$ d_\out$};

\draw[thick,red, dotted] (2,10) -- (2,9);
\draw[thick,->] (1,8) -- (t0);
\node[below] at (1.5,8) {$d_\inn$};

\draw[thick,red, ->] (7,4) -- (6,4);
\node[above] at (6.8,4) {$\bar d_\inn$};

\draw[thick, dotted] (0,8) -- (1,8);
\draw[thick, red, dotted] (8,4) -- (7,4);
\end{scope}
\end{tikzpicture}
 \end{subfigure}
    \caption{
    An example of the contradictions obtained in case \ref{case:a)} (left) and in case \ref{case:b)} (right). The possible moves for vectors $d_\inn$, $d_\out$, $\bar d_\inn$, $\bar d_\out$ conflict with $(u,v)$ being a crossing pair for $(d^\infty,d+q)$.}
    \label{fig:cases2-3}
\end{figure}

\paragraph{Case \ref{case:b)}: The crossing is overlapping of type (b)}

By Lemma \ref{lem:crossing_for_d},  $d^\infty([u,u+i])$ and $d([v-i,v])+q$ form precisely the overlapping part of the crossing, i.e., there is some integer $i \in \left\{1,\dots,\min\left\{v-1,\ceil{\frac{v-u+\ell}{2}}-1\right\}\right\}$ such that $d^\infty(u+j) = d(v-j)+q$ for all $j\in\{0,\dots, i\}$, where $i$ is maximal with respect to this property. The condition $v<u$ and the bound $i\le\ceil{\frac{v-u+\ell}{2}}-1$ imply $v-j<u+j<v-j+\ell$ for all  $j\in\{0\dots,i\}$, so, by Lemma \ref{lem:intersecting_pair_good_path}, path $P_j = \{2(v-j),\dots, 2(u+j)-1\}$ is good. 
   
By subtracting equation $d^\infty(u+j-1) = d(v-j+1)+q$ from $d^\infty(u+j) = d(v-j)+q$ for each $j\in\{1,\dots, i\}$, we obtain
\[d^\infty(u+j)-d^\infty(u+j-1)= d(v-j)-d(v-j+1)= -(d(v-j+1)-d(v-j))\] for $j\in\{1,\dots, i\}$.
     Since moves of $d$ are in one-to-one correspondence with pairs of colors via \eqref{eq:d-delta}, and opposite moves of $d$ correspond to swapping colors, edges $2(v-j)+1$ and $2(u+j)-2$  have the same color, and so do edges
     $2(v-j)$ and $2(u+j)-1$, for every $j\in\{1,\dots, i\}$.
     Moreover, by Assumption \ref{ass:good_path}, for each $j\in\{0,\dots, i\}$ none of the edges $2(v-j)-2$, $2(v-j)+1$, $2(u+j)-1$, $2(u+j)$ is yellow. This, by Assumption \ref{ass:no_balanced}, implies that the edges from $2(v-i)$ to $2v-1$ form an alternating sequence of red and blue; say that the even edges have color red and the odd edges have color blue. Then, the opposite holds for the edges from $2u-1$ to $2(u+i)$ (i.e., the odd edges are red and the even are blue), where for the first and the last edge we again used Assumption \ref{ass:no_balanced} and the fact that they are not yellow. Hence the overlapping segment is diagonal, and also each of the edges $2(v-i)-2$, $2v+1$ is red or blue.

      Now, we define \[ d_{\out} := d^\infty(u+i+1)-d^\infty(u+i),\quad  
    \bar d_{\inn} := d(v-i)-d(v-i-1),
    \]
    \[
d_{\inn} := d^\infty(u)-d^\infty(u-1), \quad \bar d_{\out} :=d(v+1)-d(v).
\]
As each of the edges $2(v-i)-2$, $2v+1$ is red or blue, the possible cases for $\bar d_\inn$, $\bar d_\out$ are:
     \[
     \bar d_\inn \in \{\RB, \leftarrow, \BY\}, \quad \bar d_\out \in \{\RB, \YB, \rightarrow\}.
     \]
     The possible cases for $d_\inn$, $d_\out$ are:
     \[
    d_\inn \in \{\BR, \rightarrow\}, \quad d_\out \in \{\BR, \BY\},
     \]
     where we used the fact that edge $2(u+i)$ is blue and edge $2u-1$ is red.
However, it is easy to check that in any combination of moves, $\bar d_\inn$ and $\bar d_\out$ lie in the same one of the two regions determined by $d^\infty$, contradicting the fact that $(u, v)$ is a crossing pair. (See Figure \ref{fig:cases2-3}, right.) This settles the last case and completes the proof of Theorem \ref{thm:cycle_choice}.

\section{Main result and proof}\label{sec:general}

In this section we prove our main result, Theorem \ref{thm:main}. 
The key ingredients are two extensions of Theorem \ref{thm:cycle_paths_choice}.
In the first, we extend Theorem \ref{thm:cycle_paths_choice}  to the case in which the point $(k_R, k_B)$ has only one integer component, which we take to be the first one. 

\begin{lemma}[Fractional version]\label{lem:cycle_frac}
    Let $G$ be a colored path or even cycle, and denote by $M_0, M_1$ the matchings of even and odd edges, respectively. Assume that $(k_R, k_B)$ is on the segment between $p_{M_0}, p_{M_1}$, with $k_R$ integer. Then there is a matching $M$ with $|M|\geq |M_1|-1$, exactly $k_R$ red edges, and $\lceil k_B\rceil$ or $\lceil k_B\rceil-1$ blue edges.  This matching can be found efficiently. 
\end{lemma}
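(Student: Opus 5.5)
The plan is to reduce Lemma \ref{lem:cycle_frac} to the integral statement, Theorem \ref{thm:cycle_paths_choice}, by replacing the fractional point $(k_R,k_B)$ with a nearby integer point that still lies on the segment between $p_{M_0}$ and $p_{M_1}$. If $k_B$ is already an integer, there is nothing to do. So assume $k_B\notin\Z$. Write $p_{M_0}=(r_0,b_0)$ and $p_{M_1}=(r_1,b_1)$, which are integer points. The segment cannot be vertical ($r_0=r_1$): then $k_R=r_0$, and $k_B$, lying between the integers $b_0$ and $b_1$, could be anything, so this degenerate case needs separate treatment. I will first handle the generic case $r_0\neq r_1$ and return to the vertical one afterwards.

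In the generic case, parametrize the segment by the red coordinate. The points with integer red coordinate are $x=r_0,r_0+1,\dots,r_1$ (say $r_0<r_1$), and the blue coordinate is the affine function $\beta(x)=b_0+\frac{b_1-b_0}{r_1-r_0}(x-r_0)$. The integer points of the segment need not include the point at $x=k_R$, so I will not look for an integer point there. Instead I will work directly on the cycle, which is the setting of Theorem \ref{thm:cycle_choice}; paths are reduced to cycles exactly as in the proof of Theorem \ref{thm:cycle_paths_choice}.

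Concretely, I would rerun the argument of Section \ref{sec:goodpaths} with $q=(k_R-r_0,k_B-b_0)$, which is now a non-integer point on the segment between $d(0)$ and $d(\ell)$. Lemma \ref{lem:curves} still gives an intersecting pair $(u,v)$ for $(d^\infty,d+q)$, but Observation \ref{cl:d_int} no longer forces $u,v$ to be integers. Since $q$ has integer first coordinate, the translate $d+q$ is shifted horizontally by an integer and vertically by a fraction. Every move of $d$ changes the blue coordinate by $0$ or $\pm1$ and has integer breakpoints, so an intersection at a non-integer blue height can only occur in the interior of a move with nonzero vertical component (\YB, \BY, \RB\ or \BR) of one curve, meeting a move of the other curve. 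Rounding the intersection to a nearby breakpoint yields integers $u',v'$ with $d^\infty(u')-d(v')=(k_R-r_0,c-b_0)$ for some $c\in\{\lfloor k_B\rfloor,\lceil k_B\rceil\}$. The aim is to choose the rounding so that the red coordinate stays exact while the blue coordinate lands on $\lceil k_B\rceil$, or on $\lfloor k_B\rfloor=\lceil k_B\rceil-1$ when that is forced. The quasi-matching construction in the proof of Theorem \ref{thm:cycles_no_choice} then produces a matching of size $\ell-1$ with $k_R$ red edges and $c$ or $c-1$ blue edges, after removing a non-red edge among $2u'-1$ and $2u'$.

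The main obstacle is the case $c=\lfloor k_B\rfloor$ followed by losing one more blue edge, which would give $\lceil k_B\rceil-2$. To avoid this, I would exploit the freedom in which edge is removed. When the rounding had to go down, the move being crossed near $u'$ has a blue edge adjacent to the endpoint, and I would try to show that one can either remove a yellow edge or round upward instead. Failing that, I would fall back on an alternative direct reduction: find an integer point $(k_R,k')$ with $k'\in\{\lfloor k_B\rfloor,\lceil k_B\rceil\}$ lying on the segment between $p_{M'_0}$ and $p_{M'_1}$ for a modified cycle. Such a cycle would be obtained by recoloring one blue edge of $M_0$ or $M_1$ yellow, which shifts one endpoint of the segment by one unit in the blue direction, and I would then apply Theorem \ref{thm:cycle_paths_choice} to it. The vertical case $r_0=r_1$ is handled similarly: the classical red-blue argument of Proposition \ref{prop:yuster}, applied with yellow merged into "non-blue", gives any intermediate blue count while keeping the red count, since the red count can be fixed by treating red edges as neutral. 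Efficiency follows, as before, by enumerating all matchings that miss at most two vertices.
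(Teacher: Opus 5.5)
There is a genuine gap, and it sits exactly at the step you call ``the main obstacle''. You never show that the rounding can always be made to land on $c=\lceil k_B\rceil$; you only say you ``would try to show'' it and then fall back on other ideas. The missing observation is that rounding \emph{upward} always works.

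Write $\beta=k_B-\lfloor k_B\rfloor\in(0,1)$. Then $d+\lceil q\rceil$ is the translate of $d+q$ by $(0,1-\beta)$, and since $q_1\in\Z$ every breakpoint of $d+q$ has the form $(x,y+\beta)$ with $x,y\in\Z$. Two horizontal moves can never meet. Neither can two diagonal moves, since they lie on lines $x_1+x_2\in\Z$ versus $x_1+x_2\in\Z+\beta$. In each remaining combination of a move of $d^\infty$ with a move of $d+q$ (vertical/vertical, vertical/horizontal, vertical/diagonal, horizontal/diagonal), a direct check shows the following: pushing $d+q$ up by $1-\beta$ gives a common lattice point of $d^\infty$ and $d+\lceil q\rceil$, at parameters $\bar u\in\{\lfloor u\rfloor,\lceil u\rceil\}$ and $\bar v\in\{\lfloor v\rfloor,\lceil v\rceil\}$. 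So the case $c=\lfloor k_B\rfloor$ never arises, and the quasi-matching argument of Theorem~\ref{thm:cycles_no_choice} costs at most one blue edge below $\lceil k_B\rceil$.

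Your fallback would not have rescued the argument. Recoloring a blue edge yellow moves an endpoint of the segment by $(0,-1)$, but the new segment in general has no integer point with red coordinate $k_R$. For example, take $p_{M_0}=(0,0)$, $p_{M_1}=(3,2)$, $k_R=1$. Recoloring a blue edge of $M_1$ is the only option, since $M_0$ has no blue edge, and it gives the segment to $(3,1)$. That segment's point with $x_R=1$ is $(1,\tfrac13)$.

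Three further points are missing or off.
\begin{enumerate}
\item Rounding can destroy the condition $\bar v<\bar u<\bar v+\ell$, which you need in order to read $\{2\bar v,\dots,2\bar u-1\}$ as a path starting at an even edge. Since only $v<u<v+\ell$ is known, you may get $\bar v\in\{\bar u,\bar u+1\}$ or $\bar u\in\{\bar v+\ell,\bar v+\ell+1\}$. These cases need separate treatment. The paper shows they force $\lceil q\rceil$ to be $(0,0)$, $(0,1)$, $d(\ell)$ or $d(\ell)+(0,1)$. The ``$+1$'' cases arise because the intersection must then lie on identical vertical moves of type $\downarrow$ or $\uparrow$ respectively. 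In every case $M_0$ or $M_1$ is itself a valid output.
\item The vertical case $r_0=r_1$ is not a problematic degenerate case. There $(k_R,\lceil k_B\rceil)$ is an integer point of the segment, so Theorem~\ref{thm:cycle_paths_choice} applies directly; the curve argument also covers it uniformly.
\item Your suggested treatment of that case via Proposition~\ref{prop:yuster}, with red and yellow merged into ``non-blue'', gives no control over the number of red edges.
\end{enumerate}
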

\begin{proof} By reasoning exactly as in the proof of Theorem \ref{thm:cycle_paths_choice},  we can assume that $G$ is an even cycle. Clearly, we can also restrict ourselves to the case where $k_B$ is not integer, as otherwise we can just apply Theorem \ref{thm:cycle_choice}.
This implies, in particular, that $(k_R,k_B)$ is in the {\em open} segment between $p_{M_0},p_{M_1}$.
We use the notation $p_{M_0}=(r_0, b_0)$ and $p_{M_1}=(r_1, b_1)$.

Consider the imbalance curve $d$ defined in Section \ref{sec:cycle}, and let $q=(k_R-r_0, k_B-b_0)$ and $\lceil q \rceil = (k_R-r_0, \lceil k_B \rceil-b_0)$. Now, if there exists a path $P$ that starts at an even edge and satisfies $\delta(P) = \lceil q \rceil$, we can argue exactly as in the proof of Theorem \ref{thm:cycles_no_choice} to find a desired matching, and we are done. 

To show the existence of such a path $P$, it suffices to find an intersecting pair $(\bar u,\bar v)$ for C such that $\bar v<\bar u<\bar v+\ell$, and then argue as in Lemma \ref{lem:intersecting_pair_good_path}.
To find such an intersecting pair, we will start from the existence of an intersecting pair $(u,v)$ for $(d^\infty,d+q)$, which is guaranteed by Lemma \ref{lem:curves}, and we will show that, by rounding $u, v$ up or down as needed, we can obtain an intersecting pair for $(d^\infty,d+\ceil q)$ (see Figure \ref{fig:fractional}). This plan, which can be easily checked by hand, requires however a fair amount of details. 
Consider the curve $d+q$: the images of its breakpoints  are of the form $(x,y+\beta)$ for $x,y\in \Z$, where $\beta=k_B-\floor{k_B}$. Hence, each of the segments $(d+q)|_{[t,t+1]}$ with $t\in \{0,\dots,\ell-1\}$ can be horizontal (with extremes $(x,y+\beta)$, $(x+1,y+\beta)$), vertical (with extremes $(x,y+\beta)$, $(x,y+1+\beta)$), or diagonal (with extremes $(x,y+\beta)$, $(x+1, y-1+\beta)$), where $x,y\in \Z$. Clearly, the curve $d+ \lceil q \rceil$ is obtained by translating $d+q$ by $(0,1-\beta)$.
Moreover, since $q=(k_R,k_B)-p_{M_0}$ and $(k_R,k_B)$ is in the open segment between $p_{M_0}$ and $p_{M_1}$, the point $q$ is in the open segment between $d(0)=(0,0)$ and $d(\ell)=p_{M_1}-p_{M_0}$. Lemma \ref{lem:curves} then guarantees the existence of an intersecting pair $(u,v)$ for $(d^\infty,d+q)$ such that $v<u<v+\ell$; thus, $d^\infty(u)=d(v)+q$. Now, using the fact that the possible moves of $d$ are only those listed in Table \ref{tab:arrows}, it is easy to verify that there is also an intersecting pair $(\bar u,\bar v)$ for  $(d^\infty,d+\lceil q \rceil)$ obtained by suitably choosing $\bar u\in\{\lfloor u \rfloor,\lceil u \rceil\}$ and $\bar v\in\{\lfloor v \rfloor,\lceil v \rceil\}$. Therefore, if this intersecting pair satisfies $\bar v < \bar u <\bar v+\ell$, we are done.

To conclude, we need to deal with the cases in which condition $\bar v < \bar u <\bar v+\ell$ is violated. Assume first that $\bar v \geq \bar u$. Then, as $v<u$, we must have $\bar v\in\{\bar u,\bar u+1\}$. If $\bar v=\bar u$, from $d^\infty(\bar u)=d(\bar v)+\ceil q$ (which holds because $(\bar u,\bar v)$ is an intersecting pair for  $(d^\infty,d+\lceil q \rceil)$) we obtain $\lceil q \rceil=(0,0)$, which implies $k_R=r_0$, $\lceil k_B \rceil=b_0$. Thus, $M_0$ is a desired matching in this case. If $\bar v=\bar u+1$, then we necessarily have  $\bar u=\floor u$, $\bar v=\ceil v$, and $u,v$ have the same integer part. But then, the intersection point $d^\infty(u)=d(v)+q$ belongs to identical moves of $d^\infty$ and $d+q$. Since the first component of $q$ is integer and the second is fractional, this cannot happen if these identical moves are both horizontal or both diagonal, and it follows that they are both vertical. Furthermore, the conditions $\bar u=\floor u$, $\bar v=\ceil v$ imply that these are moves of the type $\downarrow$. Then, $\ceil q=d^\infty(\bar u)-d(\bar v)=d^\infty(\bar u)-d^\infty(\bar u+1)=(0,1)$, which implies $k_R=r_0$, $\ceil {k_B}=b_0+1$. Thus, again, $M_0$ is a desired matching.

Now assume that $\bar u\geq \bar v+\ell$. Then, as $u<v+\ell$, we must have $\bar u\in\{\bar v+\ell,\bar v+\ell+1\}$. If $\bar u=\bar v+\ell$, then $\lceil q \rceil=d^\infty(\bar u)-d(\bar v)=d(\ell)=(r_1-r_0,b_1-b_0)$, hence $k_R=r_1$, $\lceil k_B \rceil=b_1$, and $M_1$ is a desired matching. If $\bar u=\bar v+\ell+1$, then we necessarily have $\bar u=\ceil u$, $\bar v=\floor v$, and $u,v+\ell$ have the same integer part. As above (using also the periodicity of $d^\infty$), this implies that the intersection point $d^\infty(u)=d(v)+q$ belongs to identical moves of $d^\infty$ and $d+q$, which must be vertical. Furthermore, the conditions $\bar u=\ceil u$, $\bar v=\floor v$ imply that these are moves of the type $\uparrow$. Then, $\ceil q=d^\infty(\bar u)-d(\bar v)=d^\infty(\bar v+\ell+1)-d^\infty(\bar v)=d(\ell)+(0,1)=(r_1-r_0,b_1-b_0+1)$. This implies $k_R=r_1$, $\ceil {k_B}=b_1+1$, and thus $M_1$ is a desired matching.

Finally, just as in Theorem \ref{thm:cycle_paths_choice}, a desired matching can be found efficiently by inspection.
\end{proof}

 \begin{figure}[h]
    \centering     \centering \begin{tikzpicture}[scale=1.1]

    \draw[step=1, lightgray, very thin] (0,0) grid (4,3);

    \draw[->] (0,0) -- (4,0) node[right] {};
    \draw[->] (0,0) -- (0,3) node[above] {};

\draw[very thick] (0,0) -- (0,2) -- (3,2);

\node[circle,fill=black, inner sep=1] (a) at  (0,0){};
\node[circle,fill=black, inner sep=1] (b) at  (3,2){};
\node[circle,fill=gray, inner sep=1] (b) at  (4,3){};

\node[circle,fill=red, inner sep=1.2] (q) at  (1,0.66){};
\node[left] at (q) {$q$};

\draw[red, thick] (q) -- (1,2.66) -- (4,2.66);

\node[circle,fill=yellow, inner sep=1.2] (qc) at  (1,1){};
\draw[gray, very thick, dotted] (qc) -- (1,3) -- (4,3);
\node[right] at (qc) {$\lceil q \rceil$};
\newcommand{\radius}{0.1}
\draw (1-\radius,2-\radius) rectangle (1+\radius,2+\radius);

\end{tikzpicture}
    \caption{Example of the proof of Lemma \ref{lem:cycle_frac}, with $d$ shown in black,  $d+q$ in red, $\lceil q \rceil$ in yellow and $d+\lceil q \rceil$ in gray, and the crossing indicated by a square. (Notice that $q$ lies in the segment between $d(0)$ and $d(5)$.) In this example, $u= 3$, $v=4/3$, and by setting $\bar u = 3$, $\bar v = 1$ we obtain a crossing pair for $(d^\infty, d+\lceil q \rceil)$. 
    }
    \label{fig:fractional}
\end{figure}
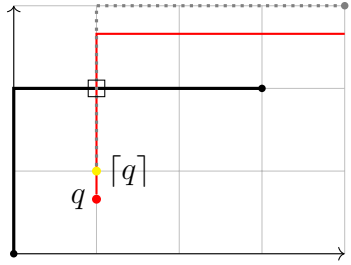

We now need to extend Theorem \ref{thm:cycle_paths_choice} in a different direction: in the hypotheses of that theorem, the graph $G$ is the union of two (disjoint) matchings whose corresponding incidence vectors are adjacent vertices in the matching polytope. We now examine the case where $G$ is the union of any two matchings.

\begin{lemma}[Two general matchings]\label{lem:general}
    Let $G$ be a colored graph that is the union of any two matchings $M_0$, $M_1$, with $|M_0|\geq |M_1|$. Assume that $(k_R, k_B)$ is an integer point on the segment between $p_{M_0}, p_{M_1}$. Then there is a matching $M$ with $|M|\geq |M_1|-2$, exactly $k_R$ red edges, and
$k_B$ or $k_B-1$ blue edges. If $G$ contains no cycles, the bound on the cardinality can be improved to $|M|\ge |M_1|-1$. In all cases, $M$ can be found efficiently. 
\end{lemma}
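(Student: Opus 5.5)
The plan is to decompose $G=M_0\cup M_1$ into components and reduce to Theorem~\ref{thm:cycle_paths_choice} and Lemma~\ref{lem:cycle_frac} on a single component. Edges in $M_0\cap M_1$ belong to every matching we build, so we remove them and subtract their colour counts from $(k_R,k_B)$. What remains is $M_0\sd M_1$, a disjoint union of paths and even cycles $C_1,\dots,C_m$. On each component $C_j$ let $a_j=p_{M_0\cap C_j}$ and $b_j=p_{M_1\cap C_j}$. We have $p_{M_0}=\sum_j a_j$ and $p_{M_1}=\sum_j b_j$ (up to the common part), and $(k_R,k_B)=\lambda p_{M_0}+(1-\lambda)p_{M_1}$ for some $\lambda\in[0,1]$. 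We want to choose, on each component, either $M_0\cap C_j$ or $M_1\cap C_j$, except on one "fractional" component where we pick a point of the segment $[a_j,b_j]$, so that the total equals $(k_R,k_B)$ and the total size stays at least $|M_1|$ minus a small loss.

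\textbf{Step 1 (choosing components).} Sort the components so that the sizes work in our favour. Start from $M_1$ and switch components one at a time from $M_1\cap C_j$ to $M_0\cap C_j$, preferring components with $|M_0\cap C_j|\ge|M_1\cap C_j|$. The red count moves in integer steps from $p_{M_1}$'s red value towards $p_{M_0}$'s red value. This is a discrete walk, so there is a single component $C_{j^*}$ across which the red count jumps over $k_R$ (or hits it exactly). We fix all other components as already chosen and let $(k'_R,k'_B)$ be the residual target on $C_{j^*}$. Since we need the residual to lie on the segment $[a_{j^*},b_{j^*}]$, this greedy order is not quite enough. I would instead argue via the LP/convexity viewpoint: the face of the polytope $\prod_j\mathrm{conv}\{a_j,b_j\}$ argument (a basic solution of the system $\sum_j(\lambda_j a_j+(1-\lambda_j)b_j)=(k_R,k_B)$, $\sum_j\lambda_j|M_0\cap C_j|+(1-\lambda_j)|M_1\cap C_j|\ge|M_1|$) has at most two fractional $\lambda_j$. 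The constraint $x(R)=k_R$ with integrality then lets us pick at most two components carrying fractional coordinates. The residual red counts on them are integers summing correctly, and blue counts may be fractional.

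\textbf{Step 2 (rounding on the special components).} With two fractional components, I would first move along the one-dimensional line keeping red fixed until one of the two components becomes integral in red; this is possible because red is one linear constraint. Then one component has an integer red target on its segment, and we apply Lemma~\ref{lem:cycle_frac} to it, losing at most one edge and having blue count $\lceil k'_B\rceil$ or $\lceil k'_B\rceil-1$. The other component is then fully $M_0$ or $M_1$, or it also needs a fractional treatment with possibly another loss of one edge. This is where the bound $|M_1|-2$ comes from. Blue rounding must be coordinated: the total blue fractional parts sum to an integer, so rounding one up and one down via Lemma~\ref{lem:cycle_frac}'s $\lceil\cdot\rceil$ / $\lceil\cdot\rceil-1$ options should give total $k_B$ or $k_B-1$. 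I expect this bookkeeping to be the main obstacle, ensuring simultaneously the blue total and that the size loss is at most $2$.

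\textbf{Step 3 (acyclic case).} When there are no cycles, every component is a path. For a path with an $M_0$-augmenting structure the loss relative to $|M_1\cap C|$ is already absorbed, because $|M_0\cap C|\ge|M_1\cap C|$ there, and Theorem~\ref{thm:cycle_paths_choice} measures loss against $|M_1|$. I would merge the two fractional path components into one path, by concatenating them through a dummy yellow edge, before applying Lemma~\ref{lem:cycle_frac}, so that only one loss of one edge is incurred. Removing the dummy edge at worst costs nothing extra, since it is yellow and was dummy, which gives $|M|\ge|M_1|-1$. Concatenating cycles this way is impossible, which explains the weaker bound in general. Efficiency follows since the LP in Step 1 is explicit and each component step is efficient by Lemma~\ref{lem:cycle_frac}.
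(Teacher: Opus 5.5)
There is a genuine gap in Step~2, and it is structural rather than bookkeeping: rounding up to two components independently cannot control the blue count.

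\textbf{Blue count.} Lemma~\ref{lem:cycle_frac} is purely existential. It returns $\lceil\beta\rceil$ \emph{or} $\lceil\beta\rceil-1$ blue edges, and you cannot choose which, so ``rounding one up and one down'' cannot be enforced. Suppose the two fractional components have blue targets $\beta_1,\beta_2\notin\Z$ with $\beta_1+\beta_2=K\in\Z$. Then $\lceil\beta_1\rceil+\lceil\beta_2\rceil=K+1$, so both may round up and you end with $k_B+1$ blue edges. If instead the $\beta_i$ are integral, each application of Theorem~\ref{thm:cycle_paths_choice} may drop a blue edge, leaving $k_B-2$. Worse, to make the red targets integral you move along the line where only the red total is fixed. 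That abandons the blue equation, so afterwards the blue targets do not even sum to the right integer.

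\textbf{Size.} Take a fractional odd-path component with $h$ and $h+1$ edges in the two matchings. It can contribute almost $h+1$ to your LP objective, while Lemma~\ref{lem:cycle_frac} only guarantees $h-1$ edges. Two such components therefore only give $|M_1|-3$. Also, with size imposed as an inequality rather than as the objective, a vertex may have three fractional coordinates.

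\textbf{Step~3.} The residual target on the two fractional components is $\lambda_1a_1+(1-\lambda_1)b_1+\lambda_2a_2+(1-\lambda_2)b_2$, with $\lambda_1\neq\lambda_2$ in general. This point lies in the parallelogram spanned by the two segments, but not on the segment between $a_1+a_2$ and $b_1+b_2$. So the concatenated path does not satisfy the hypotheses of Theorem~\ref{thm:cycle_paths_choice} or Lemma~\ref{lem:cycle_frac}.

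\textbf{The missing idea.} Do not split the target; glue \emph{all} components into a single even cycle $\mathcal{C}$. Only for $G$ as a whole is $(k_R,k_B)$ known to lie on the segment between $p_{M_0}$ and $p_{M_1}$. The construction is:
\begin{enumerate}
\item open each cycle into an even path;
\item join each $M_0$-augmenting path with an $M_1$-augmenting one;
\item attach a dummy yellow edge to each of the $|M_0|-|M_1|$ leftover paths;
\item close everything up so that $M_0$ is even and $M_1$ plus the dummies are odd.
\end{enumerate}
Then $\mathcal{C}$ has the same points $p_{M_0},p_{M_1}$. If $|M_0|>|M_1|$ or $Y\neq\emptyset$, it contains a yellow edge, so a \emph{single} application of part~1 of Theorem~\ref{thm:cycle_choice} gives exactly $(k_R,k_B)$ with $|M_0|-1$ edges.

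Mapping back, you remove the dummy edges. By a parity argument on the two exposed nodes, you then remove at most one further edge, needed only when an opened cycle has both its first and last edges selected. That edge can be taken non-red if you first contract consecutive same-colored edges to make the coloring proper. This single deletion is the only source of both the blue deficit and the drop from $|M_1|-1$ to $|M_1|-2$, and it never occurs when $G$ is acyclic.

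The case $|M_0|=|M_1|$, $Y=\emptyset$ needs a separate argument, because part~2 of Theorem~\ref{thm:cycle_choice} already loses a blue edge. The paper handles it by fixing one whole component at a time and recursing, using that all relevant points lie on the line $x_R+x_B=\ell$.
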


\begin{proof} 
First, for any edge $e$ that is in both $M_0$ and $M_1$, we remove $e$ from $M_0$ and $M_1$, we include it in the final matching $M$, and modify $(k_R, k_B)$ according to its color (e.g., we decrease $k_R$ by 1 if $e$ is red). This preserves the property that $(k_R, k_B)$ is on the segment between $p_{M_0}, p_{M_1}$. Hence, we can assume that $M_0$ and $M_1$ are disjoint. 
Furthermore, we know that the connected components of $G$ are paths and even cycles in which the edges alternate between $M_0$ and $M_1$. Since for a single connected component the result follows from Theorem \ref{thm:cycle_paths_choice}, in the following we assume that $G$ has at least two components.

We can also assume that in any component $D$ of $G$ there are no consecutive edges with the same color, i.e., the coloring is proper. Indeed, assume that there is a component $D$ that contains two consecutive edges of the same color, say blue. Consider the instance resulting after contracting these two edges to a single node.\footnote{Contracting may produce a trivial component or two parallel edges, but this does not affect the correctness of the argument.} 
 A matching in the contracted instance yields a matching in the original instance with one more blue edge, hence our desired matching exists if a similar matching exists in the contracted instance, where $k_B$ is reduced by 1. Therefore, from now on we assume that the coloring is proper. We now distinguish two cases.

\begin{enumerate}
\item
Assume that $|M_0|>|M_1|$ or $G$ contains some yellow edges. In this case, the idea is to glue the components of $G$ into a single cycle (similarly to what is done in \cite{grandoni2010approximation}) and then apply Theorem \ref{thm:cycle_choice}. We proceed as follows:
\begin{itemize}
\item First, we \lq\lq open up'' each cycle, turning it into an even path;
\item Then, while there are both an $M_0$-augmenting path and an $M_1$-augmenting path, we patch them together (by identifying an endpoint of each) into an even path;
\item Now, we are left with $|M_0|-|M_1|$ paths that are $M_1$-augmenting, and we attach a dummy edge colored yellow to each of them;
\item Finally, we identify the endpoints of the paths obtained in this way (including the original even paths) to create a cycle $\mathcal{C}$.
\end{itemize}
It is easy to check that this can be done so that in $\mathcal{C}$ the edges in $M_0$ are even, and the edges in $M_1$ along with the dummy edges are odd. Notice that $\mathcal{C}$ has $2|M_0|$ edges, $|M_0|-|M_1|$ of which are dummy, and the hypotheses of Theorem \ref{thm:cycle_choice} hold for $\mathcal{C}$ and $(k_R, k_B)$.

Since $|M_0|>|M_1|$ or $G$ contains some yellow edges, $\mathcal{C}$ has at least one yellow edge. Then, from Theorem \ref{thm:cycle_choice}, we obtain a matching $M$ of $\cal C$ of size $|\mathcal{C}|/2-1=|M_0|-1$, with $k_R$ red edges and $k_B$ blue edges. 
Now, since $|M|=|\mathcal{C}|/2-1$, $M$ leaves exposed exactly two nodes $u_1, u_2$ in $\C$; denote by $D_1$ (resp., $D_2$) a component of $G$ containing $u_1$ (resp., $u_2$).\footnote{When $u_1$ (resp., $u_2$) is a node of $\mathcal C$ obtained by identifying two nodes of $G$ belonging to two distinct components, we can choose $D_1$ (resp., $D_2$) to be either component.} Then all the edges of $M$ that in $\mathcal C$ appear on the same path connecting $u_1,u_2$ have the same parity. Thus, after removing the dummy edges, the only thing that could prevent $M$ from being a matching in $G$ is if $M$ contains two consecutive edges from $D_1$ or $D_2$. 
This is possible only if, for some $i\in\{1,2\}$, $D_i$ is an (even) cycle and the two consecutive edges of $D_i$ belonging to $M$ are the first and the last edge of $D_i$ (seen as a path in $\mathcal C$). However, this cannot happen for \emph{both} $D_1$ and $D_2$, as there is an even number of edges separating the last  edge of $D_1$ and the first edge of $D_2$ in $\mathcal C$, and on this path $M$ takes edges with the same parity. Hence, by removing the dummy edges and at most one more edge from $M$ (which we can choose not to be red, since the coloring is proper), we obtain a matching satisfying the requirements. In particular, the cardinality of this matching is at least $|M_1|-2$, which improves to $|M_1|-1$ when no component is a cycle, because in this case only the dummy edges must be removed.

\item
Suppose now that $|M_0|=|M_1|$ and $G$ does not contain any yellow edges.
Hence, $G$ consists of even cycles and paths in which the edges alternate between red and blue, because the coloring is proper. Moreover, we can assume that all the paths are even: indeed, if there is, say, an $M_0$-augmenting path, then since $|M_0|=|M_1|$ there must be an $M_1$-augmenting path and we can join them together, obtaining a new instance with an even alternating path. If this creates two consecutive edges of the same color, we can contract them to a single node as argued above. Again as above, a matching in the new instance satisfying the properties in the thesis yields a matching in the original instance with the same properties.

Note that $p_{M_0}$ and $p_{M_1}$ (hence $(k_R,k_B)$ as well) lie on the line of equation $x_R+x_B=\ell$, with $\ell=|M_0|=|M_1|$. Using the notation $p_{M_0}=(r_0,b_0)$ and $p_{M_1}=(r_1,b_1)$, this reads $r_0+b_0=r_1+b_1=\ell$.
 Consider any component $D$ of $G$, and say that $D$ has $2h^D$ edges. We would like to select the even or the odd edges of $D$ to be part of our desired matching and recurse on $G\setminus D$. Since when there is a single component one can invoke Theorem \ref{thm:cycle_paths_choice}, this will give a matching with at least $|M_1|-1$ edges satisfying the color requirements.
 
Let $r_0^D$ (resp., $b_0^D$) be the number of red (resp., blue) edges of $M_0\cap D$, and define $r_1^D, b_1^D$ analogously. We have $r_0^D+b_0^D= r_1^D+b_1^D=h^D$.
 The points
 \begin{align*}
 p^0=(r_0-r_0^D,b_0-b_0^D),&\quad k^0= (k_R-r_0^D, k_B-b_0^D),\\
 p^1=(r_1-r_1^D,b_1-b_1^D),&\quad k^1= (k_R-r_1^D, k_B-b_1^D)
 \end{align*}
 all lie on the line of equation $x_R+x_B=\ell-h^D$: $p^0, p^1$ represent the matchings $M_0$, $M_1$ restricted to $G\setminus D$, while $k^0$ (resp., $k^1$) represents the requirements on our matching once we have selected the even (resp., odd) edges of $D$. To apply recursion, we need to prove that, if the component $D$ is carefully chosen, at least one of $k^0$, $k^1$ belongs to the segment between $p^0$, $p^1$. 


 %

To fix ideas, suppose without loss of generality $r_1\ge r_0$ (implying $b_1\le b_0$), i.e, $p_{M_1}$ lies lower and to the right with respect to $p_{M_0}$. We further  distinguish two subcases.

\begin{enumerate}
\item
Assume first that, in all components, the odd edges are red and the even edges are blue, i.e., $r_1^D=b_0^D=h^D$ and $r_0^D=b_1^D=0$ for every component $D$, and also $r_1=b_0=\ell$ and $r_0=b_1=0$. Let us choose a component $D$ of smallest size. Then $h^D\le\ell/2$, and thus, as $k_R+k_B=\ell$, at least one of $k_R,k_B$ is ${}\ge h^D$.
Since  $k^0=(k_R,k_B-h^D)$ and $k^1=(k_R-h^D,k_B)$, we see that one of the points $k^0,k^1$ has non-negative components and hence lies on the segment between $p^0,p^1$. (See Figure \ref{fig:general}, left.) 

\item
We are left with the case where in some component $D$ the odd edges are blue and the even edges are red. Choosing that component, we see that actually both $k^0$ and $k^1$ lie on the segment between $p^0, p^1$: indeed, in this case, the segment between $p^0, p^1$ is exactly the segment containing $x-(h^D,0)$, $x-(0,h^D)$ for each $x$ in the segment between $p_{M_0,}p_{M_1}$. (See Figure \ref{fig:general}, right.)
\end{enumerate}
This completes the analysis for case 2.
\end{enumerate}

 Finally, to conclude the proof, we remark that the above arguments give an efficient algorithm to find a matching satisfying the requirements (where the algorithm will be recursive in case 2). Thus, a desired matching can be found efficiently.
\end{proof}

\begin{figure}[htbp]
    \centering
    \begin{tikzpicture}[scale=0.45]

    \draw[step=1, lightgray, very thin] (0,0) grid (10,10);

    \draw[thick,->] (0,0) -- (10.5,0) node[right] {$x_R$};
    \draw[thick,->] (0,0) -- (0,10.5) node[above] {$x_B$};

    \node[circle,fill=black, inner sep=1.5] (m2) at  (0,8) {};
     \node[right] at (m2) {$p_{M_0}$};

\node[circle,fill=black, inner sep=1.5] (m1) at  (8,0){};
    \node[below] at (m1) {$p_{M_1}$};

\draw (m1) -- (m2);

\node[circle,fill=red, inner sep=1.5] (k) at  (1,7){};
    \node[right] at (k) {$k$};

\node[circle,fill=black, inner sep=1.5] (p2) at  (0,6) {};
     \node[left] at (p2) {$p^{0}$};

\node[circle,fill=black, inner sep=1.5] (p1) at  (6,0){};
    \node[below] at (p1) {$p^{1}$};

\draw (p1) -- (p2);
    \node[circle,fill=red!40, inner sep=1.5] (k') at  (-1,7){};
    \node[left] at (k') {$k^1$};
    \node[circle,fill=red, inner sep=1.5] (k'') at  (1,5){};
    \node[left] at (k'') {$k^0$};


\end{tikzpicture}
 \begin{tikzpicture}[scale=0.45]

    \draw[step=1, lightgray, very thin] (0,0) grid (10,10);

    \draw[thick,->] (0,0) -- (10.5,0) node[right] {$x_R$};
    \draw[thick,->] (0,0) -- (0,10.5) node[above] {$x_B$};

    \node[circle,fill=black, inner sep=1.5] (m2) at  (2,8) {};
     \node[right] at (m2) {$p_{M_0}$};

\node[circle,fill=black, inner sep=1.5] (m1) at  (8,2){};
    \node[right] at (m1) {$p_{M_1}$};

\draw (m1) -- (m2);

\node[circle,fill=red, inner sep=1.5] (k) at  (3,7){};
    \node[right] at (k) {$k$};

\node[circle,fill=black, inner sep=1.5] (p2) at  (0,8) {};
     \node[left] at (p2) {$p^{0}$};

\node[circle,fill=black, inner sep=1.5] (p1) at  (8,0){};
    \node[below] at (p1) {$p^{1}$};

\draw (p1) -- (p2);
    \node[circle,fill=red, inner sep=1.5] (k') at  (1,7){};
    \node[left] at (k') {$k^0$};
    \node[circle,fill=red, inner sep=1.5] (k'') at  (3,5){};
    \node[left] at (k'') {$k^1$};


\end{tikzpicture}
    \caption{Examples for the proof of Lemma \ref{lem:general}. In the left figure, we have $r_1=b_0=8$, $r_0=b_1=0$, and $h^D=2$. In this case, the point $k^0$ is on the  segment between $p^0,p^1$. In the right figure, we have $r_1=b_0=8$, $r_0=b_1=2$, and $h^D=2$. Then $k^0$, $k^1$ are both on the segment between $p^0, p^1$. }
    \label{fig:general}
\end{figure}
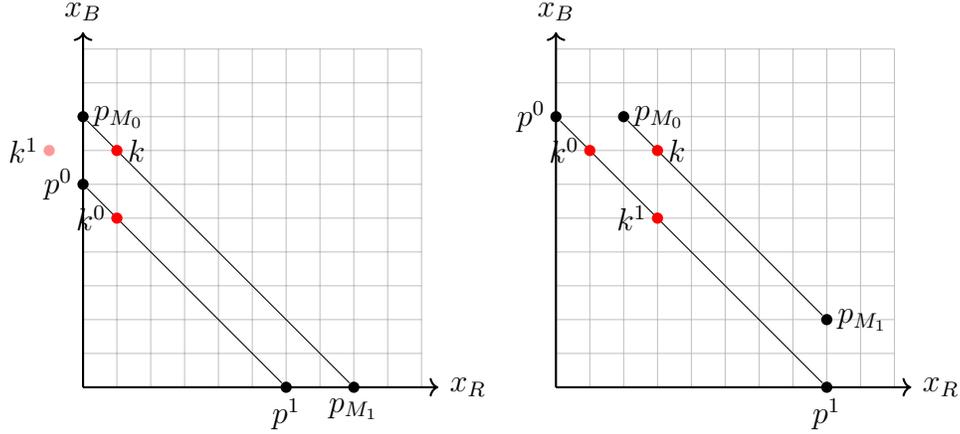

\begin{remark}\em
The above result is tight. Indeed, let $G$ be the union of two cycles $C$ and $C'$ of length 8, with color sequences R\,Y\,R\,Y\,R\,Y\,R\,Y and B\,Y\,B\,Y\,B\,Y\,B\,Y, respectively. Let $M_0$ be the perfect matching formed by all red and blue edges, and $M_1$ be the perfect matching formed by all yellow edges. Then $(k_R,k_B):=(2,2)$ is on the segment joining $p_{M_0}$ and $p_{M_1}$. It is immediate to see that no matching of $G$ whose restriction to $C$ or $C'$ is perfect can have $k_R$ red edges and $k_B$ or $k_B-1$ blue edges. Thus, any matching $M$ with $k_R$ red edges and $k_B$ or $k_B-1$ blue edges satisfies $|M|\le|M_1|-2$.
\end{remark}

\medskip
Before proving our main theorem, we need a final easy observation.

\begin{obs}\label{obs:parallelogram}
Let $P\subseteq\R^n$ be a polytope with 0/1 vertices. If $P$ has dimension 2, then $P$ is a triangle or a parallelogram.
\end{obs}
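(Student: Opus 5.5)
We need to show that a 2-dimensional polytope $P$ with 0/1 vertices is a triangle or a parallelogram. Since $P$ is a polygon, it suffices to show that $P$ has at most four vertices, and that if it has exactly four, then it is a parallelogram.

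\textbf{Step 1: a parallelogram among any four vertices.} Take any four distinct vertices $a,b,c,d$ of $P$. They lie in the 2-dimensional affine hull of $P$, so they are affinely dependent. Hence there is a nonzero $\lambda\in\R^4$ with $\sum\lambda_i=0$ and $\lambda_a a+\lambda_b b+\lambda_c c+\lambda_d d=0$. Because all four points are 0/1 vectors and distinct, we can try to show that such a dependency must be, up to scaling and relabeling, $a+c=b+d$.

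To do this, first examine each coordinate $j$. It gives $\sum_{i\in S_j}\lambda_i=0$, where $S_j$ is the set of points with a $1$ in coordinate $j$.
\begin{itemize}
\item No three of the points can be collinear, since distinct 0/1 points on a common line number at most two: a line meets the cube $[0,1]^n$ in a segment whose endpoints are its only possible 0/1 points. Consequently all $\lambda_i\neq0$, because a dependency among three points would force collinearity.
\item Now $S_j$ ranges over subsets whose $\lambda$-sum is zero. Singletons and triples are impossible, since a triple's sum equals minus the remaining $\lambda$, which is nonzero. So every $S_j$ is empty, a pair, or all four points.
\item Since the points are distinct, some $S_j$ must be a pair, say $\{a,c\}$. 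Then $\lambda_a=-\lambda_c$, and with $\sum\lambda_i=0$ this gives $\lambda_b=-\lambda_d$.
\item If every pair $S_j$ were $\{a,c\}$ or $\{b,d\}$, then $a=c$ coordinatewise, a contradiction. So some pair is mixed, say $\{a,b\}$, giving $\lambda_a=-\lambda_b$.
\end{itemize}
Combining these, $\lambda\propto(1,-1,1,-1)$, so $a+c=b+d$. Thus any four vertices form a parallelogram with diagonals $ac$ and $bd$, and the four points are in convex position.

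\textbf{Step 2: at most four vertices.} Suppose $P$ has five vertices. Among them, the four distinct 0/1 points $a,b,c,e$ also satisfy a relation of the same parallelogram form. Applying Step 1 to each quadruple in turn, every 4-subset of the five points is a parallelogram. The pairing in each relation is determined by the diagonals, so we can chase the relations and derive an equality between two distinct points, which is a contradiction.

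A cleaner route is available. Every four vertices of a convex pentagon form a parallelogram, so their diagonals bisect each other. For $a,b,c,d$ the common midpoint is $m=(a+c)/2$. For $a,b,c,e$, the vertex $e$ lies on one side, and the diagonal must be $ac$ again or a different pair; checking this yields a contradiction with convex position.

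I expect this final combinatorial check to be the main obstacle. The first approach I would try is as follows. In a convex pentagon $v_1,\dots,v_5$ (in cyclic order), the quadruple obtained by omitting $v_5$ has diagonals $v_1v_3$ and $v_2v_4$, so $v_1+v_3=v_2+v_4$. Omitting $v_4$ gives $v_1+v_3=v_2+v_5$. Subtracting yields $v_4=v_5$, a contradiction.

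Therefore $P$ has three or four vertices, and in the four-vertex case Step 1 shows it is a parallelogram.
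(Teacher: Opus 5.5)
Your proof is correct in substance and takes a different route from the paper, but it contains one slip you should fix.

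\textbf{The slip in Step 1.} From $\lambda_a=-\lambda_c$, $\lambda_b=-\lambda_d$ and $\lambda_a=-\lambda_b$ you get $\lambda\propto(1,-1,-1,1)$ in the order $(a,b,c,d)$. That is, $a+d=b+c$, not $a+c=b+d$. A coordinate set $S_j$ that is a pair has $\lambda$-sum zero, so it always joins two \emph{adjacent} vertices of the parallelogram, never a diagonal. The slip is harmless, because afterwards you only use that any four vertices satisfy $p+r=q+s$ for \emph{some} pairing.

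\textbf{Tightening Step 2.} Discard the two vague sketches and keep the final argument. Add the reason the pairing is forced by the cyclic order: $p+r=q+s$ says that the segments $pr$ and $qs$ share their midpoint. Two opposite sides of a convex quadrilateral are disjoint, so $\{p,r\},\{q,s\}$ must be the diagonals $\{v_1,v_3\},\{v_2,v_4\}$. Also write ``at least five vertices'' and pick any five of them.

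The relation chase you mention also works and needs no convexity. Compare the pairings on $\{v_1,v_2,v_3,v_4\}$ and $\{v_1,v_2,v_3,v_5\}$. If $v_4$ and $v_5$ have the same partner, you get $v_4=v_5$ at once. Otherwise, say $v_1+v_4=v_2+v_3$ and $v_2+v_5=v_1+v_3$. Adding gives $v_4+v_5=2v_3$, which for 0/1 vectors forces $v_4=v_5=v_3$.

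\textbf{Comparison with the paper.} The paper places one vertex at the origin (tacitly via the flips $x_i\mapsto 1-x_i$, since a translation would not preserve 0/1 vectors). It writes a fourth vertex as $z=\lambda x+\mu y$ and uses one coordinate where $x$ and $y$ differ to get $\mu=1$, then $\lambda=\pm1$. This gives $z\in\{x+y,\,y-x\}$ in one stroke. Since these two cannot both be 0/1 vectors when $x\neq0$, the fourth vertex is unique, so ``exactly four vertices'' comes out of the same computation.

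Your support analysis is symmetric in the four points and needs no normalization. It also proves a clean standalone fact: any four distinct 0/1 points lying in a common plane form a parallelogram. But that fact alone does not bound the number of vertices, which is why you need the separate Step 2.
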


\begin{proof}
Suppose that $P$ has at least four vertices, one of which can be assumed to be the origin without loss of generality. That is, $P$ contains at least four pairwise distinct points $0,x,y,z$ with 0/1 coordinates. Since $P$ has dimension 2, there exist $\lambda,\mu\in\R$ such that $z=\lambda x+\mu y$. Let $i\in\{1,\dots,n\}$ be an index such that $x_i\ne y_i$, say $x_i=0$ and $y_i=1$ without loss of generality. Then $z_i=\mu$, and therefore $\mu\in\{0,1\}$. If $\mu=0$, we obtain $z=\lambda x$, which is possible only if $z\in\{0,x\}$, a contradiction. Thus $\mu=1$, hence $z=\lambda x+y$. As $x$ and $y$ differ in at least one component, this gives $\lambda\in\{-1,0,1\}$. But $\lambda\ne0$ because $z\ne y$, thus $\lambda\in\{-1,1\}$ and $z=x+y$ or $y=x+z$. In both cases, $P$ has exactly four vertices that form a parallelogram.
\end{proof}

We are now ready to show our result on general graphs, which we restate here for convenience. 

\begin{thm*}[\ref{thm:main}, restated]
There is a deterministic algorithm that,
given a red-blue-yellow colored graph $G$ (thus, with edge set $E=R\mathbin{\dot\cup} B\mathbin{\dot\cup} Y$) and integers $k_R, k_B\ge0$ such that the LP \[\max\{x(E): x\in P_M(G),\, x(R)=k_R,\, x(B)=k_B\}\] 
is feasible and has optimal value $\alpha^*$, finds a matching $M$ of $G$ with $|M|\geq \lfloor \alpha^*\rfloor-3$, exactly $k_R$ red edges, and $k_B$ or $k_B-1$ blue edges. 
\end{thm*}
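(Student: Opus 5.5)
We solve the LP and take an optimal solution $x^*$ that is a vertex of the face $F$ of $P_M(G)$ minimal among those containing it. Since $x^*$ is a vertex of $P_M(G)\cap\{x(R)=k_R,\,x(B)=k_B\}$, the minimal face $F$ of $P_M(G)$ containing $x^*$ has dimension at most $2$. Such a vertex can be computed in polynomial time by the ellipsoid method (Padberg--Rao separation), and so can $F$: we repeatedly add tight constraints and find vertices of $F$ by optimizing over faces. We then split into cases according to $\dim F\in\{0,1,2\}$.

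\textbf{Dimension $0$ or $1$.} If $\dim F=0$, then $x^*$ is an integral matching and we are done. If $\dim F=1$, then $x^*=\lambda\chi^{M_0}+(1-\lambda)\chi^{M_1}$ for two adjacent matchings. Their symmetric difference is a single path or even cycle, so we may assume $M_0\cap M_1$ has been fixed, as in the proof of Lemma~\ref{lem:general}. The point $(k_R,k_B)$ lies on the segment between $p_{M_0}$ and $p_{M_1}$, so Theorem~\ref{thm:cycle_paths_choice} applied to $M_0\triangle M_1$ gives a matching of size at least $\min(|M_0|,|M_1|)-1\ge\lfloor\alpha^*\rfloor-2$, since $\alpha^*\le\max$ and the two sizes differ by at most one.

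\textbf{Dimension $2$.} By Observation~\ref{obs:parallelogram}, $F$ is a triangle or a parallelogram. The linear map $x\mapsto (x(R),x(B))$ sends $F$ to a polygon containing $(k_R,k_B)$. Since $x^*$ is a vertex of the intersection, the fiber over $(k_R,k_B)$ is a single point, so the map is injective on the affine hull of $F$. The idea is to move from $(k_R,k_B)$ along a direction that keeps the red count fixed, i.e.\ along the line $x_R=k_R$, until we hit an edge of the image polygon. That edge is the image of an edge $[\chi^{N_0},\chi^{N_1}]$ of $F$, so $N_0$ and $N_1$ are adjacent matchings. The hit point is $(k_R,k_B')$ with $k_B'$ possibly fractional. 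We choose the direction in which the objective $x(E)$ does not decrease (as $x(E)$ is affine on $F$, one of the two directions works), so the hit point has $x(E)\ge\alpha^*$. The blue-count shift changes the target: if we move up, $k_B'\ge k_B$ and we would produce too many blue edges. To avoid this, we move in the direction of decreasing $x_B$ whenever possible. If that direction decreases $x(E)$, we bound the loss: the hit point $y$ satisfies $y(E)\ge\alpha^*-c$ for a small constant $c$, using that the vertices are 0/1 and that the segment length in the blue coordinate is controlled by $\lceil k_B\rceil$ rounding. Lemma~\ref{lem:cycle_frac} then gives $k_R$ red edges and $\lceil k_B'\rceil$ or $\lceil k_B'\rceil-1$ blue edges, with size at least $\min(|N_0|,|N_1|)-1$. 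The adjacent matchings in the $1$-dimensional case may not be symmetric-difference simple, and in that situation Lemma~\ref{lem:general} costs one more, giving at most $-2$ from $\min|N_i|$.

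\textbf{Main obstacle.} The delicate step is reconciling the blue count with the size loss in the $2$-dimensional case. The plan is to take the hit point on the downward side, so that $k_B'\le k_B$ and $\lceil k_B'\rceil\in\{k_B\}$ or less. If $k_B'<k_B-1$, we instead exploit the parallelogram or triangle structure: we pick a vertex-to-vertex segment of $F$ through a point with red coordinate $k_R$ and blue coordinate in $(k_B-1,k_B]$. Such a segment exists when $F$ is a parallelogram, because $F$ is a Minkowski sum of two segments, $F=\chi^{A}+[0,\chi^{S}]+[0,\chi^{T}]$, and we can pair vertices diagonally; here the union of two general matchings appears, hence the use of Lemma~\ref{lem:general}. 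Accounting for the cardinality, $x(E)$ over $F$ ranges within about one unit of $\alpha^*$ because each side changes size by at most one. Combining the $-1$ from this range with the $-2$ from Lemma~\ref{lem:general} yields $\lfloor\alpha^*\rfloor-3$. Verifying this bookkeeping carefully, especially the triangle case and the rounding of $k_B'$, is where I expect most of the work.
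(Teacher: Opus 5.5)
\textbf{Verdict: genuine gap in the two-dimensional case.} Your handling of $\dim F\le 1$ is fine, and your observation that $x\mapsto(x(R),x(B))$ is injective on the affine hull of $F$ is correct. The problem when $\dim F=2$ is a missing idea, not just bookkeeping.

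A single boundary point $(k_R,k_B')$ cannot work. Lemma~\ref{lem:cycle_frac} yields $\lceil k_B'\rceil$ or $\lceil k_B'\rceil-1$ blue edges, and $k_B'$ can be arbitrarily far from $k_B$. Whether $x(E)$ increases along the line is irrelevant to this.

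Your fallback does not close the gap either:
\begin{itemize}
\item The only vertex-to-vertex segments of a parallelogram are its sides and two diagonals. Nothing forces the line $x_R=k_R$ to meet a diagonal at a blue coordinate in $(k_B-1,k_B]$.
\item Lemma~\ref{lem:general} needs an \emph{integer} target on the segment, so a fractional point there is not usable.
\item A triangle has no diagonals at all.
\end{itemize}

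What you need is to \emph{bracket} $k_B$. Let $(k_R,k_B')$ and $(k_R,k_B'')$, with $k_B'<k_B<k_B''$, be the two points where the line $x_R=k_R$ meets the boundary of the projected polygon. Apply Lemma~\ref{lem:cycle_frac} to both corresponding edges of $F$. This gives $M'$ with exactly $k_R$ red and at most $\lceil k_B'\rceil\le k_B$ blue edges, and $M''$ with exactly $k_R$ red and at least $\lceil k_B''\rceil-1\ge k_B$ blue edges. Then $(k_R,k_B)$ is an integer point on the segment $p_{M'}p_{M''}$. Lemma~\ref{lem:general} applied to $M'\cup M''$ meets the color requirements, at a cost of at most $1+2$ below the smallest vertex matching $M^-$ of $F$.

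The cardinality accounting also needs an argument you do not have. For a triangle, the three vertex matchings are pairwise adjacent, so $\lfloor\alpha^*\rfloor=|M^-|$ and the bound $|M^-|-3$ suffices.

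For a parallelogram, your claim that $x(E)$ stays within about one unit of $\alpha^*$ is false. Label the vertex matchings $M_1,\dots,M_4$ cyclically; their sizes can be $s,s+1,s+2,s+1$. Then $\lfloor\alpha^*\rfloor$ can equal $|M^-|+1$, and the naive count only gives $\lfloor\alpha^*\rfloor-4$.

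The paper repairs this in three steps:
\begin{itemize}
\item $D'=M_1\triangle M_2=M_3\triangle M_4$ and $D''=M_2\triangle M_3=M_1\triangle M_4$ are node-disjoint.
\item $|M^+|=|M^-|+2$ forces both $D'$ and $D''$ to be odd paths.
\item $M'\triangle M''\subseteq D'\cup D''$.
\end{itemize}
Hence in that case $M'\triangle M''$ is acyclic, Lemma~\ref{lem:general} loses only one, and the bound $\lfloor\alpha^*\rfloor-3$ is restored.
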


\begin{proof}
Let $x^*$ be a basic optimal solution of the LP. Then $x^*$ belongs to a face $F$ of $P_M(G)$ of dimension at most two, and thus $F$ is a singleton, a segment, or a two-dimensional polytope.
As $P_M(G)$ (hence, $F$) has only 0/1 vertices, by Observation \ref{obs:parallelogram} in the latter case $F$ is a triangle or a parallelogram. By choosing a minimal face, we can assume that $x^*$ lies in the relative interior of $F$. 

Every vertex $x$ of $F$ is the characteristic vector of a matching $M$, and the objective value $x(E)$ at $x$ is equal to $|M|$. Furthermore, if $x,x'$ are adjacent vertices of $F$ (and thus of $P_M(G)$), the symmetric difference between the corresponding matchings is an alternating cycle or an alternating path, and thus $-1\le|M|-|M'|\le 1$.

Now, let $M^-,M^+$ be a smallest and a largest matching (respectively) corresponding to a vertex of $F$. By the above observations, if $F$ is a segment or a triangle, then $|M^+|\le|M^-|+1$, which implies $\floor{\alpha^*}=|M^-|$ because $x^*$ is in the relative interior of $F$; and if $F$ is a parallelogram, then $|M^+|\le|M^-|+2$, which implies $\floor{\alpha^*}\le|M^-|+1$. Thus, summarizing,
\begin{equation}\label{eq:bound-alpha}
\floor{\alpha^*}\le|M^-|+1,\mbox{ and }\floor{\alpha^*}=|M^-| \mbox{ if $F$ is not a parallelogram.}
\end{equation}

Let $\pi:\R^E\to\R^2$ be the linear function defined as follows: $\pi(x)=(x(R),x(B))$. Note that $\pi(x^*)=(k_R,k_B)$, and if $x$ is a vertex of $P_M(G)$, then $x$ is the characteristic vector of a matching $M$ such that $\pi(x)=p_M$.
Moreover, since $x^*$ is in the relative interior of $F$, $(k_R,k_B)$ is in the relative interior of $\pi(F)$.
Note also that although the dimension of $\pi(F)$ can be smaller than that of $F$, the linearity of $\pi$ implies that $\pi(F)$ is a singleton, a segment, a triangle, or a parallelogram. We consider each of these cases separately.

\begin{enumerate}
\item If $\pi(F)$ is a singleton, then $\pi(F)=\{(k_R,k_B)\}$, and thus every matching $M$ that corresponds to a vertex of $F$ satisfies $p_M=(k_R,k_B)$. If we choose a maximum matching $M$ among these, we have $|M|\ge\alpha^*$. 
    
\item If $\pi(F)$ is a segment, then there exist adjacent vertices $x^0,x^1$ of $F$ (corresponding to matchings $M_0,M_1$, respectively) such that $(k_R,k_B)$ is on the segment between $\pi(x^0)=p_{M_0}$ and $\pi(x^1)=p_{M_1}$. We assume $|M_0|\ge|M_1|$ without loss of generality. 
Furthermore, we can assume that $M_0\cap M_1=\emptyset$, as otherwise we can include in $M$ every edge of $M_0\cap M_1$, and translate $p_{M_0}$, $p_{M_1}$ and $(k_R,k_B)$ by an identical integer vector. Now, Theorem \ref{thm:cycle_paths_choice}  yields a matching $M$ that satisfies the color requirements and is of size $|M|\ge|M_1|-1\ge|M^-|-1\ge\floor{\alpha^*}-2$, where the last inequality follows from \eqref{eq:bound-alpha}. (With some care, one can actually choose adjacent vertices $x^0,x^1$ such that $\floor{\alpha^*}=|M_1|$ and thus show that $|M|\ge\floor{\alpha^*}-1$.) 

\item If $\pi(F)$ is a triangle, then $F$ is also a triangle with vertices corresponding to some distinct matchings $M_1,M_2,M_3$.  Thus, $\pi(F)$ has vertices $p_{M_1},p_{M_2},p_{M_3}$. 
Recall that $(k_R,k_B)$ lies in the relative interior of $\pi(F)$. (See Figure \ref{fig:main} for an illustration of this case.)

\begin{figure}[h!]
    \centering     \begin{tikzpicture}[scale=0.6]

    \draw[step=1, lightgray, very thin] (0,0) grid (10,10);

    \draw[->] (0,0) -- (10.5,0) node[right] {$x_R$};
    \draw[->] (0,0) -- (0,10.5) node[above] {$x_B$};

\draw[thin, gray] (5,6.5) -- (5,1.66);

\node[circle,fill=black, inner sep=1] (1) at  (4,1){};
\node[below] at (1) {$p_{M_1}$};

\node[circle,fill=black, inner sep=1] (2) at  (9,4){};
\node[right] at (2) {$p_{M_2}$};

\node[circle,fill=black, inner sep=1] (3) at  (1,9){};
\node[above] at (3) {$p_{M_3}$};

\node[circle,fill=red, inner sep=1.2] (k) at  (5,4){};
\node[right] at (k) {$(k_R, k_B)$};

\node[circle,fill=black, inner sep=0.9] (m'') at  (5,6){};
\node[left] at (m'') {\small $p_{M''}$};

\node[circle,fill=black, inner sep=0.9] (m') at  (5,2){};
\node[left] at (m') {\small$p_{M'}$};

\node[circle,fill=red, inner sep=0.7] (k'') at  (5,6.5){};
\node[right, shift={(0,0.1)}] at (k'') {\small$(k_R,k_B'')$};

\node[circle,fill=red, inner sep=0.7] (k') at  (5,1.66){};
\node[right, shift={(0,-0.25)}] at (k') {\small$(k_R,k_B')$};

\draw[gray] (1) -- (2) -- (3) -- (1);



\end{tikzpicture}
    \caption{  An example for case 3 of the proof of Theorem \ref{thm:main}.
    } \label{fig:main}
\end{figure}
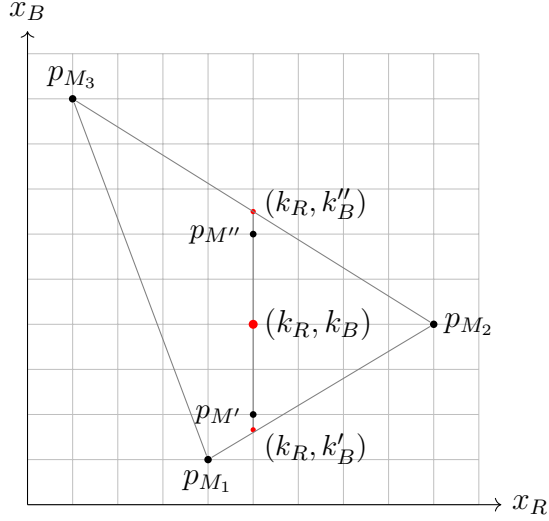

The line of equation $x_R=k_R$ intersects the boundary of $\pi(F)$ at precisely two points, which we denote by $(k_R, k'_B)$ and $(k_R, k''_B)$, with $k'_B<k''_B$. Without loss of generality, $(k_R,k'_B)$ belongs to the segment $p_{M_1}p_{M_2}$, and $(k_R,k''_B)$ belongs to the segment $p_{M_2}p_{M_3}$. We construct a matching $M'$ from $M_1\cup M_2$ as follows. First, by arguing as in the previous case, we can assume that $M_1\cap M_2=\emptyset$. 
Then, we apply Lemma \ref{lem:cycle_frac}: we obtain a matching $M'$ with $|M'|\geq \min\{|M_1|, |M_2|\}-1$, $k_R$ red edges, and at most $\lceil k'_B \rceil$ blue edges. 

Similarly, we obtain a matching $M''$, where $|M''|\geq \min\{|M_2|, |M_3|\}-1$ and $M''$ has $k_R$ red edges and at least $\lceil k''_B \rceil-1$ blue edges. 
As $k_B$ is an integer satisfying $k'_B<k_B<k''_B$, we have $\lceil k'_B \rceil\le k_B\le\lceil k''_B \rceil-1$, and thus the point $(k_R, k_B)$ lies on the segment $p_{M'}p_{M''}$. We now apply Lemma \ref{lem:general} to $M', M''$ and $(k_R, k_B)$ to obtain a matching that meets the color requirements and is of size at least $\min\{|M'|, |M''|\}-2\ge|M^-|-3=\lfloor \alpha^*\rfloor -3$, where the equation follows from \eqref{eq:bound-alpha}.

\item If $\pi(F)$ is a parallelogram, then $F$ is also a parallelogram with vertices $x^1,x^2,x^3,x^4$, where $x^i$ is adjacent to $x^{i+1}$ for each $i\in\{1,2,3,4\}$ (with index 5 identified with 1). Note that $x_1-x_2=x_4-x_3$, $x_2-x_3=x_1-x_4$, and these two vectors have disjoint support because they are consecutive edges of a parallelogram with 0/1 vertices. 
If we denote by $M_i$ the matching whose characteristic vector is $x^i$ (for $i\in\{1,2,3,4\}$), then $\pi(F)$ has vertices $p_{M_1},\dots,p_{M_4}$, in this order. Also, recall that $M_i\sd M_{i+1}$ is an alternating cycle or an alternating path for each $i$. Since $M_i\sd M_{i+1}$ coincides with the support of $x^i-x^{i+1}$ for each $i$, the above observations show that there is no common edge between $D':=M_1\sd M_2=M_3\sd M_4$ and $D'':=M_2\sd M_3=M_1\sd M_4$. 
We can actually show that $D'$ and $D''$ are also node-disjoint. In fact, assume that there exist two adjacent edges $e,f$ with $e\in D'$ and $f\in D''$, and suppose without loss of generality $e\in M_1\sm M_2$. We have the following chain of implications:
\[e\in M_1 \Rightarrow f\notin M_1 \Rightarrow f\in M_4 \Rightarrow e\notin M_4 \Rightarrow e\in M_3.\] 
This yields $e\in M_3\sm M_2\subseteq D''$, which contradicts the fact that $e\in D'$ and $D',D''$ are edge-disjoint. Thus, $D',D''$ are node-disjoint. Note also that $M_1,\dots,M_4$ coincide outside of $D'\cup D''$.

As already pointed out, the cardinalities of the largest and smallest matchings among $M_1,\dots,M_4$ differ by at most two, i.e., $|M^+|\le|M^-|+2$. We note here that if $|M^+|=|M^-|+2$, then $D'$ and $D''$ must be odd paths. Thus, $\floor{\alpha^*}\le|M^-|+1$ (as in \eqref{eq:bound-alpha}), with equality holding only if $D'$ and $D''$ are odd paths.

As in the previous case, we consider the line of equation $x_R=k_R$, which intersects the boundary of $\pi(F)$ at precisely two points $(k_R, k'_B)$ and $(k_R, k''_B)$, with $k'_B<k''_B$.
Without loss of generality, $(k_R, k'_B)$ is on the segment $p_{M_1}p_{M_2}$ and $(k_R, k''_B)$ is on the segment $p_{M_i}p_{M_j}$, where $(i,j)\in\{(2,3),(3,4)\}$.
By arguing as in the previous case, we obtain a matching $M'$ that has $k_R$ red edges and at most $\ceil{k'_B}$ blue edges, and satisfies $|M'|\ge\min\{|M_1|,|M_2|\}-1$. Similarly, we can construct a matching $M''$ that has $k_R$ red edges and at least $\ceil{k''_B}-1$ blue edges, and satisfies $|M''|\ge\min\{|M_i|,|M_j|\}-1$. Note that $M'$ coincides with $M_1$ and $M_2$ outside of $D'$. Similarly, if $(i,j)=(2,3)$ (resp., $(i,j)=(3,4)$), $M''$ coincides with $M_i$ and $M_j$ outside of $D''$ (resp., $D'$). Then, as $M_1,\dots,M_4$ coincide outside of $D'\cup D''$, we see that also $M',M''$ coincide outside of $D'\cup D''$, i.e.,
$M'\sd M''\subseteq D'\cup D''$.

Following again the arguments of the previous case, from Lemma \ref{lem:general} we obtain a matching $M$ that meets the color requirement and satisfies 
\[|M|\ge
\begin{cases}
\min\{|M'|,|M''|\}-2\ge|M^-|-3& \mbox{if $M'\sd M''$ contains a cycle,}\\
\min\{|M'|,|M''|\}-1\ge|M^-|-2 &\mbox{otherwise.}
\end{cases}\]
Now, if $\floor{\alpha^*}\le|M^-|$, then $|M|\ge\floor{\alpha^*}-3$. Otherwise, as shown above, $\floor{\alpha^*}=|M^-|+1$ and $D',D''$ must be node-disjoint odd paths. As $M'\sd M''\subseteq D'\cup D''$, $M'\sd M''$ contains no cycles and thus again $|M|\ge\floor{\alpha^*}-3$.
\end{enumerate}
This concludes the analysis of all cases and thus finishes the proof. 
\end{proof}

As mentioned at the end of Subsection \ref{sec:goodpaths},  a weaker version of Theorem \ref{thm:cycle_choice} can be obtained by exploiting the existence of an intersecting pair rather than a crossing pair. In this weaker version, the number of blue edges is allowed to be one unit smaller than $k_B$ also when $Y\ne\emptyset$. Furthermore, this is sufficient to prove Theorem \ref{thm:cycle_paths_choice}. By just going trough all the above proofs again, one can verify that in this relaxed setting Lemma \ref{lem:cycle_frac} can still be derived, while Lemma \ref{lem:general} becomes weaker: the number of blue edges can only be guaranteed to be between $k_B-2$ and $k_B$. With these results, one obtains a weaker version of Theorem \ref{thm:main}, where the matching can have at most two fewer blue edges than required.
\section{Crossing pairs and the Crossing Lemma}
\label{sec:crossing}

In Section \ref{sec:intro} we gave the notion of crossing pair under the assumption that $f^\infty$ divides the plane into two connected components. Although this is intuitively clear, we provide a formal proof here. We then prove  the Crossing Lemma (Lemma \ref{lem:crossing}).

The classical Jordan curve theorem asserts that every plane simple closed curve divides the plane into two connected components (a bounded one and an unbounded one). More formally, if $\gamma:\mathbb [0,1]\to\R^2$ is a continuous map such that $\gamma(0)=\gamma(1)$ and $\gamma|_{[0,1)}$ is injective (or, equivalently and more concisely, if $\gamma:\mathbb S^1\to\R^2$ is a continuous injective map, where $\bb S^1$ is the unit circle), then $\R^2\setminus\Im(\gamma)$ consists of two connected components. A slightly different version of this theorem states that every simple closed curve on the 2-dimensional sphere $\bb S^2$ divides $\bb S^2$ into two connected components, and is formalized as above with $\R^2$ replaced by $\bb S^2$. We refer the reader to \cite{munkres-topology} for the proofs of these results.

The following variant of the above theorems is certainly known, and just as intuitively clear. However, since we were unable to find a reference, we provide here a short proof.

\begin{lemma}\label{lemma:Jordan-open}
Every continuous injective map $\gamma:\R\to\R^2$ such that \\$\lim_{t\to\pm\infty}\|\gamma(t)\|=\infty$ divides $\R^2$ into two connected components.
\end{lemma}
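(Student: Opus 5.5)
The natural route is to compactify and reduce to the Jordan curve theorem on the sphere $\bb S^2$, as recalled just above. Let $\bb S^2=\R^2\cup\{\infty\}$ be the one-point compactification, identified with the round sphere via stereographic projection $\sigma:\R^2\to\bb S^2\sm\{N\}$, where $N$ is the north pole corresponding to $\infty$. We define $\hat\gamma:\bb S^1\to\bb S^2$ by parametrizing $\R$ through an open arc of the circle. Concretely, take a homeomorphism $\phi:\R\to(0,1)$, for instance $\phi(t)=\tfrac12+\tfrac1\pi\arctan t$. Then set $\hat\gamma(s)=\sigma(\gamma(\phi^{-1}(s)))$ for $s\in(0,1)$ and $\hat\gamma(0)=\hat\gamma(1)=N$, viewing $[0,1]$ with endpoints identified as $\bb S^1$.

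The key steps are as follows.

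First, we check continuity of $\hat\gamma$ at the glued point. This is exactly where the hypothesis $\lim_{t\to\pm\infty}\|\gamma(t)\|=\infty$ is used: neighbourhoods of $N$ in $\bb S^2$ correspond to complements of compact balls in $\R^2$, so $\sigma(\gamma(t))\to N$ as $t\to\pm\infty$, that is, as $s\to0^+$ or $s\to1^-$.

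Second, we check injectivity of $\hat\gamma$ on $[0,1)$. On $(0,1)$ this follows from the injectivity of $\gamma$ and $\sigma$. Moreover, $N$ is not attained on $(0,1)$, since $\gamma$ takes values in $\R^2$.

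Third, we apply the Jordan curve theorem on $\bb S^2$. It gives that $\bb S^2\sm\Im(\hat\gamma)$ has exactly two connected components. Since $N\in\Im(\hat\gamma)$, we have $\bb S^2\sm\Im(\hat\gamma)=\sigma(\R^2\sm\Im(\gamma))$. Because $\sigma$ is a homeomorphism onto $\bb S^2\sm\{N\}$, connected components correspond bijectively, so $\R^2\sm\Im(\gamma)$ also has exactly two components.

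The only step requiring real care is the first, namely continuity at $N$, together with the fact that the glued parametrization of $\bb S^1$ is a legitimate quotient map. Both are routine: a continuous map on $[0,1]$ whose endpoint values agree descends to $\bb S^1=[0,1]/\{0\sim1\}$. So no serious obstacle is expected. One should also note that $\Im(\gamma)$ is closed in $\R^2$, which follows from properness. This is not strictly needed, since the correspondence above already handles components.
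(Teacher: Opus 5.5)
Your proposal is correct and follows essentially the same route as the paper: reparametrize $\R$ as $(0,1)$, transport $\gamma$ to $\mathbb S^2\setminus\{N\}$ via stereographic projection, close the curve at $N$ using the hypothesis $\lim_{t\to\pm\infty}\|\gamma(t)\|=\infty$, and apply the sphere version of the Jordan curve theorem. You also spell out two details the paper leaves implicit, namely continuity at $N$ and the transfer of components back to $\R^2$ through $\sigma$, and both are handled correctly.
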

\begin{proof}
Since $\R$ is homeomorphic to the interval $(0,1)$, and $\R^2$ is homeomorphic to $\bb S^2\sm\{N\}$ (where $N$ is the north pole) via  the stereographic projection,  $\gamma$ is equivalent to a curve $\gamma':(0,1)\to\mathbb S^2\setminus\{N\}$, with the property $\lim_{t\to 0}\gamma'(t)=\lim_{t\to 1}\gamma'(t)=N$. If we extend $\gamma'$ by setting $\gamma'(0)=\gamma'(1)=N$, we obtain a continuous map $\gamma':[0,1]\to\mathbb S^2$ such that $\gamma'|_{[0,1)}$ is injective. We conclude by applying the sphere version of Jordan curve theorem.
\end{proof}


Since, in the assumptions of the Crossing Lemma, $f^\infty$ is an injective map, it is clear that $\lim_{t\to\pm\infty}\|f^\infty(t)\|=\infty$. Thus, by Lemma \ref{lemma:Jordan-open}, $f^\infty$ divides the plane into two connected components, and the notion of crossing pair given in Section \ref{sec:intro} is defined properly.

\begin{remark}\label{rem:crossing}
\em The notion that $g$ crosses $f^\infty$ can be easily formalized even if $f$ and $g$ are only assumed to be continuous (with $f^\infty$ injective), without requiring them to be piecewise-linear: one only has to require that $g$ intersects both connected components of $\R^2\setminus\Im(f^\infty)$. Unfortunately, things become far more delicate when one wants to identify a point where the crossing happens. Indeed, on the one hand, it is clear from the definition that $(u,v)$ is a crossing pair for $(f^\infty,g)$ if and only if, informally speaking and ignoring possible overlappings, $v$ is the point where $g$ leaves one component of $\R^2\sm\Im(f^\infty)$ and enters the other component; thus, the term \lq\lq crossing'' appears to be appropriate. On the other hand, however, the situation is not always so clear: for instance, there are examples where $f^\infty(u)=g(v)$ but, for every $\epsilon>0$, each of $g((v-\epsilon,v))$ and $g((v,v+\epsilon))$ intersects both components of $\R^2\setminus\Im(f^\infty)$. In this situation, it is not even clear if one would say that $g$ crosses $f^\infty$ there. But for piecewise-linear functions, our definition precisely matches the intuitive notion of crossing.
\end{remark}

We are ready to prove the Crossing Lemma, which we restate here.

\begin{lemma*}[\ref{lem:crossing}, restated]
Let $f: [0,\tau]\rightarrow \R^2$ be a continuous piecewise-linear map such that $f^\infty$ is injective, and let $q$ be a point on the segment between $f(0)$ and $f(\tau)$ such that $q\not\in\Im(f)$.
Then there exists a crossing pair $(u,v)$ for $(f^\infty,f+q-f(0))$ such that $v<u<v+\tau$.
\end{lemma*}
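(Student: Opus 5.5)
Normalize first: translate so that $f(0)=0$, and write $\Delta=f(\tau)$. Then $q=\lambda\Delta$ with $\lambda\in(0,1)$; the endpoints are excluded because $q\notin\Im(f)$. Set $g=f+q$, so $g(0)=q$ and $g(\tau)=q+\Delta$. Let $\Gamma=\Im(f^\infty)$. By Lemma \ref{lemma:Jordan-open}, $\Gamma$ splits $\R^2$ into two components $U^+$ and $U^-$. The translation $T:x\mapsto x+\Delta$ maps $\Gamma$ to itself, since $f^\infty(t+\tau)=f^\infty(t)+\Delta$.

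The first step is to show that $T$ preserves each side, i.e.\ $T(U^\pm)=U^\pm$. Since $T$ is a homeomorphism fixing $\Gamma$, it either preserves or swaps the two components. A swap is impossible: every point far to the left of $\Gamma$ in the direction orthogonal to $\Delta$ is mapped to a point that is still far on the same side, because $\Gamma$ lies in a bounded strip around the line $\R\Delta$.

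The second step is to show that $g(0)=q$ and $g(\tau)=q+\Delta$ lie in opposite components. Both points are off $\Gamma$: we have $q\notin\Im(f)$ by hypothesis, and $q\notin f^\infty([k\tau,(k+1)\tau])$ for $k\neq 0$ by the Lemma \ref{lem:curves}-style argument, which I would re-derive. The key point is that the segment $[q,q+\Delta]$ together with $\Gamma$ behaves like Grandoni--Zenklusen's closed-curve argument. Consider the closed curve formed by $f|_{[0,\tau]}$ followed by the segment back from $\Delta$ to $0$. The point $q$ lies on this segment. Shifting $q$ slightly off the segment and counting parities of intersections with $\Gamma$ along the straight segment from $q$ to $q+\Delta$ should give an odd count. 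If instead $q$ and $q+\Delta$ were on the same side, then $\Gamma$ together with $\Gamma+q$ would give a contradiction via the Jordan curve argument in the proof of Lemma \ref{lem:curves}, namely that $f+q$ and its periodic images must interlace with $f^\infty$. I expect this parity/sidedness claim to be the main obstacle. I would try to prove it by a winding/intersection-number computation: choose a generic point $z$ and compare how often the vertical ray from $z$ meets $\Gamma$ within one period.

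Once opposite sides are established, $g$ starts in one component and ends in the other. Let $v$ be the last time such that $g([0,v))$ still meets the starting component $U^\pm$ arbitrarily close to $v$ from the left; more concretely, use piecewise linearity. The set $g^{-1}(\Gamma)$ is a finite union of points and closed intervals, since both curves are polygonal and only finitely many periods of $\Gamma$ meet the compact set $\Im(g)$. Order the components of $[0,\tau]\setminus g^{-1}(\Gamma)$; the first lies in one side and the last in the other. Hence there are consecutive open pieces lying on opposite sides, separated by a point or an interval $[s,v]$ of $g^{-1}(\Gamma)$. With $u$ defined by $f^\infty(u)=g(v)$, this gives a crossing pair, and $0<s\le v<\tau$ because the endpoints are off $\Gamma$.

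Finally, I must get $v<u<v+\tau$. Here periodicity gives freedom: $(u,v)$ is a crossing pair for $(f^\infty,g)$ if and only if $(u-k\tau,v)$ is a crossing pair for $(f^\infty,g-k\Delta)$, and the sides are preserved by the first step. So I would instead consider crossings of $g$ with the curve's pieces. Define $h(t)=u(t)-t$ on $g^{-1}(\Gamma)$, which is well defined by injectivity. For a crossing with $u\le v$ or $u\ge v+\tau$, I would argue that an integer shift $k$ yields $u-v\in(0,\tau)$ only if $q-k\Delta$ is still in the open segment. So instead I count signed crossings: the signed crossings of $g$ with the region between $\Gamma$ and $\Gamma+\Delta$ ... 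Concretely, I would prove that the signed number of crossings with $u-v\in(0,\tau)$ equals $1$, while the contributions from other ranges cancel by comparing $g$ with $f$ itself via the homotopy $f+sq$, $s\in[0,\lambda]$. The homotopy never makes an endpoint meet $\Gamma$, since $s\Delta\notin\Gamma$ for $s\in(0,1)$ by injectivity-type arguments. A nonzero signed count then yields an actual crossing in the desired range.
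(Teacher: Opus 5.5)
Your proposal has a genuine gap: its central claim is false. Since $g(\tau)=q+\Delta=T(g(0))$, and your first step (which is correct) shows that $T$ maps each component of $\R^2\sm\Gamma$ to itself, the endpoints $g(0)$ and $g(\tau)$ always lie in the \emph{same} component. So no parity count can come out odd.

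The left example of Figure~\ref{fig:crossing} shows this concretely. There $\Gamma$ is the graph of a nondecreasing staircase, and both $g(0)=(3,2)$ and $g(\tau)=(7,4)$ lie below it. The curve $g$ crosses $\Gamma$ exactly twice, upward at $(3.5,3)$ and then downward at $(5.5,4)$. Consequently your intermediate-value step, which needs the endpoints on opposite sides, has nothing to start from.

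What actually must be proved is that $g$ enters the other component at all, rather than merely touching $\Gamma$. This is where the hypothesis $q\notin\Im(f)$ does its work. The paper normalizes so that $\Gamma\subseteq\{x_2\ge 0\}$ with $q$ on the $x_1$-axis, and cuts the region below $\Gamma$ into bubbles. It then uses a measure argument on the circle $\R/a\Z$ (Claim~\ref{cl:circle}) to show that $g$ cannot travel from $g(0)$ to $g(\tau)$ passing between bubbles only through their extremes.

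Your final step cannot be rescued by a signed count. Normalize $f(0)=0$ and set $\Phi(s,t)=f^\infty(s)-f(t)$ on the parallelogram $0\le t\le\tau$, $t\le s\le t+\tau$. Solutions of $\Phi=q$ are exactly the intersecting pairs with $v\le u\le v+\tau$, and none lies on the boundary. On the boundary, $\Phi$ behaves as follows:
\begin{itemize}
\item it is constantly $0$ on $s=t$;
\item it is constantly $\Delta$ on $s=t+\tau$;
\item it traces $f$ forward on $t=0$ and backward on $t=\tau$, because $f^\infty(s)-\Delta=f(s-\tau)$.
\end{itemize}
This loop has winding number $0$ around $q$. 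Hence the algebraic number of crossings with $v<u<v+\tau$ is always $0$, never $1$. In the example above, both crossings satisfy $v<u<v+\tau$ for any parametrization, and they have opposite signs.

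Any homotopy-invariant count is therefore blind to the crossing you need. This is why Grandoni--Zenklusen and the paper both rely on Jordan-curve arguments instead. Separately, your claim that $s\Delta\notin\Gamma$ for all $s\in(0,1)$ is false. In the right example of Figure~\ref{fig:crossing}, $f$ passes through the midpoint of the segment between $f(0)$ and $f(\tau)$; only the given $q$ is assumed to lie off $\Gamma$.

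The paper obtains the range condition differently. It takes the crossing pair with maximal $u\in(0,\tau)$ and the one with minimal $u'\in(\tau,2\tau)$. It then excludes the simultaneous situation $u\le v$ and $u'\ge v'+\tau$ by a case analysis built on auxiliary Jordan curves and Claim~\ref{cl:circle}.
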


\begin{proof}
We use the notation $f(t)=(f_1(t),f_2(t))$ to specify the components of $f(t)$. Moreover, we define the map $g:=f+q-f(0)$.

\begin{claim}\label{cl:assum}
The lemma holds in general if it holds whenever all the following additional conditions are satisfied: $f(0)=(0,0)$, $f_1(\tau)>0$, $f_2(\tau)=0$, and $\min_{t\in[0,\tau]}f_2(t)=\min_{t\in\R}f^\infty_2(t)=0$.
\end{claim}

\begin{cpf}
Since all the properties involved in the statement of the lemma are invariant under isometries, and since $f(0)\ne f(\tau)$ by injectivity, we can assume without loss of generality that $f_2(0)=f_2(\tau)$ and $f_1(0)<f_1(\tau)$ (by applying a rotation), $\min_{t\in[0,\tau]}f_2(t)=\min_{t\in\R}f^\infty_2(t)=0$ (vertical translation), and $f(a)=(0,0)$ for some $a\in[0,\tau)$ (horizontal translation). It remains to show that we can further assume $a=0$.

Define $\tilde f,\tilde g:\R\to\R^2$ by setting $\tilde f(t)=f^\infty(t+a)$ and $\tilde g(t)=g^\infty(t+a)$ for $t\in\R$. Note that $\tilde f(0)=(0,0)$, $\tilde f_1(\tau)=f_1(\tau)-f_1(0)>0$, $\tilde f_2(\tau)=0$, and $\min_{t\in\R}\tilde f_2(t)=0$. Thus, the restriction $\tilde f|_{[0,\tau]}$ and the point $\tilde q:=q-f(0)$ satisfy all the assumptions of both the lemma and the claim. Assuming that the lemma holds when all these conditions are met, and noting that $\big(\tilde f|_{[0,\tau]}\big)^\infty=\tilde f$, there is a crossing pair $(\tilde u,\tilde v)$ for $(\tilde f,\tilde g|_{[0,\tau]})$ such that $\tilde v<\tilde u<\tilde v+\tau$.
Now, if $\tilde v+a<\tau$, setting $u=\tilde u+a$ and $v= \tilde v+a$ gives $v<u<v+\tau$; since the two components of $\R^2$ determined by $\tilde f$ are the same as those determined by $f^\infty$, $(u,v)$ is the desired crossing pair for $(f^\infty,g)$. If $\tilde v+a>\tau$, by periodicity we can take $u=\tilde u+a-\tau$ and $v= \tilde v+a-\tau$. 

To conclude, we show that the case $\tilde v+a=\tau$ is impossible. Indeed, if $\tilde v+a=\tau$ then
$$g(\tau)=\tilde g(\tilde v)=\tilde f(\tilde u)=f(\tau)-f(0)+f(\tilde u+a-\tau),$$
where the first equality follows from the definition of $\tilde g$ and the fact that $0<\tilde v<\tau$, the second one holds because $(\tilde u,\tilde v)$ is a crossing pair for $(\tilde f,\tilde g|_{[0,\tau]})$, and the third one is by definition of $\tilde f$ and the fact that $\tau<\tilde u+a<2\tau$ (as $\tilde v<\tilde u<\tilde v+\tau$). It follows that $q=g(\tau)-f(\tau)+f(0)=f(\tilde u+a-\tau)$, contradicting the assumption $q\not\in \Im(f)$.
\end{cpf}

In the remainder of the proof, we assume that the properties of Claim \ref{cl:assum} are satisfied. In particular, this implies that $q_2=0$ and $f^\infty_2
(k\tau)=0$ for each $k\in \Z$.

\begin{claim}\label{cl:ordered zeros}
If $t_1,t_2\in\R$ satisfy $t_1<t_2$ and $f^\infty_2(t_1)=f^\infty_2(t_2)=0$, then $f^\infty_1(t_1)<f^\infty_1(t_2)$.
\end{claim}

\begin{cpf}
Since $\min_{t\in\R}f^\infty_2(t)=0$, if $f^\infty_2(\bar t)=0$ for some $\bar t$ that is not a breakpoint of $f^\infty$, then $f^\infty_2(t)=0$ for every $t$ between the two breakpoints immediately preceding and following $\bar t$. Therefore, it suffices to prove the claim under the assumption that $t_1$ and $t_2$ are both breakpoints.

If the claim does not hold, then there exist breakpoints $t_1$ and $t_2$ such that $t_1<t_2$, $f^\infty_2(t_1)=f^\infty_2(t_2)=0$, and $f^\infty_1(t_1)>f^\infty_1(t_2)$ (where the last inequality is strict by the injectivity of $f^\infty$). Since $f^\infty_1(0)<f^\infty_1(\tau)$, $t_0:=t_2-\tau$ satisfies $f^\infty_2(t_0)=0$ and $f^\infty_1(t_0)<f^\infty_1(t_2)$.
Now, define a closed curve $\gamma$ by concatenating the following two pieces: (i) $f^\infty([t_0,t_1])$; (ii) any arc joining $f^\infty(t_1)$ and $f^\infty(t_0)$ fully contained in the halfplane $x_2<0$, except for its extreme points. By Jordan's theorem, $\gamma$ divides the plane into two connected components ---a bounded one and an unbounded one. The point $f^\infty(t_2)$ is in the bounded region. If we take any $t_3>t_2$ such that $f^\infty(t_3)$ is in the unbounded region, the curve $f^\infty([t_2,t_3])$ must cross $\gamma$. As $f^\infty_2(t)\ge0$ for every $t$, $f^\infty([t_2,t_3])$ crosses the piece (i) of $\gamma$ (i.e., $f^\infty([t_0,t_1])$), contradicting the injectivity of $f^\infty$.
\end{cpf}

Given any two breakpoints $t_1,t_2$ of $f^\infty$ such that $t_1<t_2$, $f^\infty_2(t_1)=f^\infty_2(t_2)=0$ and $f^\infty_2(t)>0$ for every {\em breakpoint} $t\in(t_1,t_2)$, we construct a closed curve by concatenating the following two pieces: (i) $f^\infty([t_1,t_2])$; (ii) any arc joining $f^\infty(t_2)$ and $f^\infty(t_1)$ fully contained in the halfplane $x_2<0$ and in the vertical strip $f^\infty_1(t_1)<x_1<f^\infty_1(t_2)$, except for its extreme points. We call {\em bubble} the topologically closed bounded region determined by this closed curve. We say that (i) is the {\em upper boundary} of the bubble. Note that either $f^\infty_2(t)>0$ for every $t\in(t_1,t_2)$ or $f^\infty_2(t)=0$ for every $t\in[t_1,t_2]$. In the former case we say that the bubble is {\em fat}, in the latter case we call it {\em slim}. The points $f^\infty(t_1)$ and $f^\infty(t_2)$ are respectively the left and right extreme of the bubble. By the previous claim and the injectivity of $f^\infty$, two distinct bubbles share at most one point, and if they share a point, this is an extreme of both of them. The bubbles are naturally ordered from left to right, based on the position of their extremes.

Before proceeding, we need the following fact:

\begin{claim}\label{cl:circle}
Let $I\subseteq\R$ and $J\subseteq[0,\tau]$ be two closed intervals that coincide modulo $\tau$ (i.e., $\{z \mod \tau : z\in I \}=\{z \mod \tau : z\in J \})$,
where $J$ contains 0 or $\tau$, and define
$Z_{f,I}=f^\infty(I)\cap \{x\in\R^2:x_2=0\}$, $Z_{g,J} = g(J)\cap \{x\in\R^2:x_2=0\}$. Then $Z_{f,I}\not\subseteq Z_{g,J}$.
\end{claim}

\begin{cpf}
To simplify the notation, in this proof we write $Z_f$ and $Z_g$ for $Z_{f,I}$ and $Z_{g,J}$, respectively.
Since $Z_f$ and $Z_g$ are contained in the $x_1$ axis, we will ignore the second component and view these sets as subsets of $\R$. It will also be useful to embed these sets in $\R/a\Z$ (the circle of length $a$), where $a=f_1(\tau)-f_1(0)=f_1(\tau)$. Thus, given a number $w\in\R$, we write $[w]$ to denote $w \mod a$; similarly, for a subset $S\subseteq\R$, we write $[S]=\{[w]:w\in S\}$.

Assume by contradiction $Z_f\subseteq Z_g$, which implies $[Z_f]\subseteq [Z_g]$. As $I=J$ modulo $\tau$ and $g(t)=f(t)+q$ for every $t\in[0,\tau]$, we have $[Z_g]=[Z_f+q_1]$ and therefore $[Z_f]\subseteq [Z_f+q_1]$. Since $[Z_f]$ and $[Z_f+q_1]$ have the same 1-dimensional measure in $\R/a\Z$, this implies that $[Z_f+q_1]\setminus [Z_f]$ has zero measure in $\R/a\Z$. Then, as $Z_f$ is a finite union of (possibly degenerate) closed intervals, $[Z_f+q_1]\setminus [Z_f]$ can only contain some isolated points. But the number of isolated points in $[Z_f]$ and $[Z_f+q_1]$ is the same, so the two sets coincide. Thus $[Z_f]=[Z_g]$. Since $J$ contains 0 or $\tau$ and $[q_1]=[g_1(0)]=[g_1(\tau)]$, this implies that $[q_1]\in [Z_f]$. In other words, also using $q_2=0$, there are some $t\in I$ and $k\in\Z$ such that $f^\infty(t)=q+k(a,0)=q+kf(\tau)$. Since $f^\infty_2
(k\tau)=f^\infty_2
((k+1)\tau)=0$, by Claim \ref{cl:ordered zeros} we find that $t\in [k\tau, (k+1)\tau]$, thus, by definition of $f^\infty$, $q\in f([0,\tau])$. This contradicts the assumptions of the lemma.
\end{cpf}

\begin{claim}\label{cl:first-pair}
There exists a crossing pair $(u,v)$ for $(f^\infty,g)$ with $0<u<2\tau$.
\end{claim}

\begin{cpf}
Since $g(0)=q\notin f([0,\tau])$, $g(0)$ is in the interior of a fat bubble; due to the periodicity of $f^\infty$, $g(\tau)$ is also in the interior of a fat bubble. 
Let $B_0$ be the bubble containing $g(0)$, and order the subsequent bubbles from left to right as $B_1,\dots, B_k$, where $B_k$ contains $g(\tau)$.  Clearly, $k\ge1$.
Since $g_1(0)>f_1^\infty(0)$, $g_1(\tau)<f_1^\infty(2\tau)$, and $f^\infty_2(0)=f^\infty_2(2\tau)=0$, the upper boundaries of $B_0,\dots,B_k$ are contained in $f^\infty([0,2\tau])$.

Assume by contradiction that there is no crossing pair $(u,v)$ for $(f^\infty,g)$ with $0<u<2\tau$. Then $g$ can cross the boundary of the bubbles $B_0,\dots,B_k$ only at their extremes. Furthermore, by Claim \ref{cl:ordered zeros} (which also applies to $g$  because $g$ is obtained by translating $f$), $\Im(g)$ cannot contain the left extreme of $B_0$. This implies that $g$ goes through all the bubbles $B_0,\dots, B_k$ in this order.  
In particular, $\Im(g)$ contains the right extreme of each of $B_0,\dots,B_{k-1}$, and if $B_i$ s a slim bubble for some $i\in\{1,\dots,k-1\}$ then $\Im(g)$ contains the segment joining its extremes.

To verify that this is not possible, we now rephrase the above property. Let $p$ be any point on the upper boundary of $B_0$, where $p$ is not an extreme of $B_0$,
and let $s$ be such that $f^\infty(s)=p$. Note that $f^\infty(s+\tau)$ is on the upper boundary of $B_k$. 
Then, defining $I=[s,s+\tau]$ and $J=[0,\tau]$, and using the notation of Claim \ref{cl:circle}, the property stated in the previous paragraph can be expressed as $Z_{f,I}\subseteq Z_{g,J}$, which contradicts Claim \ref{cl:circle}.
\end{cpf}

\begin{claim}
If there is no crossing pair $(u,v)$ for $(f^\infty,g)$ with $v<u\le\tau$, then there is a crossing pair with $\tau<u<2\tau$.
\end{claim}

\begin{cpf}
Assume by contradiction that no crossing pair satisfies $v<u\le\tau$ or $\tau<u<2\tau$. Then every crossing pair with $0<u<2\tau$ also satisfies $u\le v$. By Claim \ref{cl:first-pair}, there exists a crossing pair with this property, and we choose the one maximizing $u$. 

Consider the list of consecutive bubbles $B_0,\dots,B_k$, where the upper boundary of $B_0$ contains $f^\infty(u)=g(v)$, and $B_k$ is the (fat) bubble containing $g(\tau)$ in its interior. In case $u$ is a breakpoint with $f^\infty_2(u)=0$ (i.e., $f^\infty(u)$ is an extreme of two bubbles), we number the bubbles so that $f^\infty(u)$ is an extreme of both $B_0$ and $B_1$. Note that $k\ge1$, because the right extreme of $B_0$ is at most $\tau$ (as $u\le v<\tau$) and the left extreme of $B_k$ is at least $\tau$ (as $g_1(\tau)>f^\infty_1(\tau)$). 

By Claim \ref{cl:ordered zeros} and the fact that $g(\tau)$ belongs to the segment joining $f^\infty(\tau)$ and $f^\infty(2\tau)$, the breakpoints of $f^\infty$ delimiting $B_k$ belong to the interval $[\tau,2\tau]$. Therefore the upper boundaries of $B_1,\dots,B_k$ are contained in $f^\infty([u,2\tau])$.
Furthermore, by the maximality of $u$, $g$ does not cross $f^\infty|_{(u,2\tau)}$. Then $g$ can only enter and leave the bubbles $B_1,\dots,B_k$ at their extremes. In particular, as $g(\tau)$ is in $B_k$, $g$ enters $B_k$ at one of its extremes. However, Claim \ref{cl:ordered zeros} 
applied to $g$ implies that $g$ enters $B_k$ at its left extreme. By proceeding backwards, we see that $g$ goes through all the bubbles $B_1,\dots,B_k$ in this order. In particular, for every $t\in[u,\tau]$ such that $f^\infty_2(t)=0$ there exists $t'\in[v,\tau]\subseteq[u,\tau]$ such that  $g(t')=f^\infty(t)$. 
This contradicts Claim \ref{cl:circle} with $I=J=[u,\tau]$. 
\end{cpf}

A symmetric argument shows the following:

\begin{claim}
If there is no crossing pair $(u,v)$ for $(f^\infty,g)$ with $\tau\le u<v+\tau$, then there is a crossing pair with $0<u<\tau$.
\end{claim}

\begin{cpf}
Assume that no crossing pair satisfies $\tau\le u<v+\tau$ or $0<u<\tau$. Then every crossing pair with $0<u<2\tau$ also satisfies $u\ge v+\tau$. By Claim \ref{cl:first-pair}, there exists a crossing pair with this property, and we choose the one minimizing $u$.

Consider the list of consecutive bubbles $B_0,\dots,B_{k}$, where $B_0$ is the (fat) bubble containing $g(0)$ in its interior, and the upper boundary of $B_{k}$ contains $f^\infty(u)=g(v)$. In case $u$ is a breakpoint with $f^\infty_2(u)=0$, we number the bubbles so that $f^\infty(u)$ is an extreme of both $B_{k-1}$ and $B_k$. Note that $k\ge1$, because the right extreme of $B_0$ is at most $\tau$ (as $g_1(0)<f^\infty_1(\tau)$) and the left extreme of $B_k$ is at least $\tau$ (as $u\ge v+\tau>\tau$). 

Since the breakpoints of $f^\infty$ delimiting $B_0$ belong to the interval $[0,\tau]$, the upper boundaries of $B_0,\dots,B_{k-1}$ are contained in $f^\infty([0,u])$.
Furthermore, by the minimality of $u$, $g$ does not cross $f^\infty|_{(0,u)}$. Then $g$ can only enter and leave the bubbles $B_0,\dots,B_{k-1}$ at their extremes. Therefore, as $g(0)$ is in $B_0$, $g$ leaves $B_0$ at its right extreme and then goes through all the bubbles $B_1,\dots,B_{k-1}$ in this order. In particular, for every $t\in[\tau,u]$ such that $f^\infty_2(t)=0$ there exists $t'\in[0,v]\subseteq[0,u-\tau]$ such that $g(t')=f^\infty(t)$. This contradicts Claim \ref{cl:circle} with $I=[\tau,u]$ and $J=[0,u-\tau]$. 
\end{cpf}

Now, if there is a crossing pair with $v<u\le\tau$ or $\tau\le u<v+\tau$, then we are done, as the inequalities $v<u<v+\tau$ are satisfied. Thus, we assume that this is not the case. Then, by the two claims above, there are two crossing pairs $(u,v)$ and $(u',v')$ with $0<u<\tau<u'<2\tau$, and we choose them with $u$ maximal and $u'$ minimal. It follows that $g$ does not cross $f^\infty|_{[u,u']}$.

If $v<u$ or $u'<v'+\tau$, we are done again. Therefore in the following we assume $v\ge u$ and $u'\ge v'+\tau$. Note further that since $u<u'$, the two crossing pairs do not coincide, and thus we also have $v\ne v'$.

Assume first that $v'<v$, and let $(\bar u,\bar v)$ be the intersecting pair for $(f^\infty,g)$ with $\tau< \bar u\le u'$, $0<\bar v\le v'$, and $\bar v$ minimal with respect to these properties. (This is well defined, as in the worst case one has $(\bar u,\bar v)=(u',v')$.) Let $\gamma$ be the curve obtained by concatenating $g([0,\bar v])$, $f^\infty([\tau,\bar u])$, and an arc joining $f^\infty(\tau)$ and $g(0)$ contained in the halfplane $x_2<0$, except for its extremes (see Figure \ref{fig:crossing_v'<v}). This is a closed curve because $f^\infty(\bar u)=g(\bar v)$, and it is simple by the minimality of $\bar v$. 
For $\epsilon>0$ sufficiently small, $f^\infty(\tau-\epsilon)$ is in the bounded region delimited by $\gamma$, otherwise $f^\infty([0,\tau])$ would intersect $f^\infty([\bar u, 2\tau])$, contradicting the injectivity of $f^\infty$. Because $g$ does not cross $f^\infty|_{[u,u']}$, this implies that $f^\infty|_{[u,\tau]}$ is entirely contained in this bounded region, and therefore so is, in particular, the point $f^\infty(u)=g(v)$.
On the other hand, $g(\tau)$ is in the unbounded region, because $g_1(\tau)>f^\infty_1(\tau)$. 
Thus $g|_{[v,\tau]}$ must cross $\gamma$, which is not possible since $g$ is injective (because so is $f$ and $g$ is a translation of $f$) and does not cross $f^\infty|_{[u,u']}$.

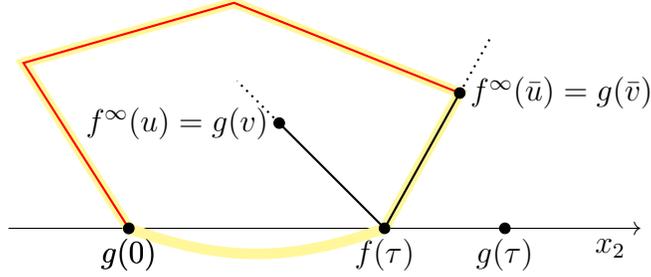
\begin{figure}
    \centering
\begin{tikzpicture}[scale=2]

    \draw [->] (-0.8,0) -- (3.4,0);
    \node[below] at (3.2,0) {$x_2$};

    \filldraw (0,0) circle (1pt) node[below] {$g(0)$};

     \coordinate (ftau) at (1.7,0);

    \coordinate (A) at (-0.7,1.1);
    \coordinate (B) at (0.7,1.5);
    \coordinate (C) at (2.2,0.9);
    \coordinate (I) at (1,0.7);

    \draw[line width=4pt, yellow!50, line cap=round] (0,0) -- (A) -- (B) -- (C) -- (ftau) to[bend left=20] (0,0);

    \filldraw (ftau) circle (1pt) node[below] {$f(\tau)$};
   
    \filldraw (2.5,0) circle (1pt) node[below] {$g(\tau)$};

    \draw[red, thick] (0,0) -- (A) -- (B) -- (C);

    \draw[thick] (C) -- (ftau) -- (I);
    \draw[thick, dotted] (C) -- ($  1.4*(C) - 0.4*(ftau) $);
    \draw[thick, dotted] (I) -- ($  1.4*(I) - 0.4*(ftau) $);

    \filldraw (C) circle (1pt);
    \filldraw (I) circle (1pt);

    \filldraw (0,0) circle (1pt) node[below] {$g(0)$};
    \node at (I) [left] {$f^\infty(u) = g(v)$};
    \node at (C) [right] {$f^\infty(\bar u) = g(\bar v)$};

\end{tikzpicture}
\caption{Case $v'<v$ of the proof of Lemma \ref{lem:crossing}. The relevant pieces of $f^\infty$ (resp., $g$) are shown in black (resp., red), while $\gamma$ is shown in yellow. }\label{fig:crossing_v'<v}
\end{figure}

Assume now that $v<v'$. 
Let $\gamma$ be the simple closed curve obtained by gluing together $g([0,\tau])$ and an arc joining $g(0)$ and $g(\tau)$ contained in the halfplane $x_2<0$, except for its extremes. We assign to $\gamma$ the orientation induced by $g$. Thus, as $g_1(0)<g_1(\tau)$, $\gamma$ is oriented clockwise, which means that the bounded (resp., unbounded) region lies on the right (resp., left) side of $\gamma$.
Since $g$ does not cross $f^\infty|_{[u,u']}$, we have that $f^\infty([u,u'])$ is fully contained in either the bounded or the unbounded region, where we consider these as closed regions (i.e., we include $\gamma$ in both of them). We analyze these two cases separately.

\begin{itemize}
\item Suppose first that $f^\infty([u,u'])$ lies in the unbounded region delimited by $\gamma$.
Given any $t\in[u,u']$ such that $g_1(0)<f^\infty_1(t)<g_1(\tau)$ and $f_2^\infty(t)=0$, the curve $g$ must contain the point $f^\infty(t)$, otherwise $f^\infty(t)$  would be in the interior of the bounded region. In particular, this is true for $t=\tau$, and we let $s\in[0,\tau]$ be such that $g(s)=f^\infty(\tau)$. Note that $s\ne v$, as $g(v)=f^\infty(u)$ and $u\ne\tau$.

If $s<v$ (i.e., $g$ passes through $f^\infty(\tau)$ before reaching $g(v)=f^\infty(u)$), then $g$ must go through all the bubbles formed by $f^\infty|_{[\tau,u']}$ before reaching $g(v)$. This implies that $g([s,v])$ contains all the points of $f^\infty([\tau,u'])$ that lie on the $x_1$ axis. 
As $s\ge0$ and $v< v'\le u'-\tau$, the latter still holds if $g([s,v])$ is replaced by $g([0,u'-\tau])$.
This contradicts Claim \ref{cl:circle} with $I=[\tau,u']$ and $J=[0,u'-\tau]$.

Otherwise, when $s>v$ (i.e., $g$ passes through $g(v)=f^\infty(u)$ before reaching $f^\infty(\tau)$), after visiting $g(v)$, $g$ must go through all the bubbles formed by $f^\infty|_{[u,\tau]}$ before reaching $f^\infty(\tau)$, and then must go through all the bubbles formed by $f^\infty|_{[\tau,u']}$ before reaching $g(v')=f^\infty(u')$. Thus,  $g([v,v'])$ (and therefore also $g([0,\tau])$) contains all the points of $f^\infty([u,u'])$ that lie on the $x_1$ axis. As $u+\tau\le v+\tau<v'+\tau\le u'$, the latter still holds if $f^\infty([u,u'])$ is replaced by $f^\infty([u,u+\tau])$. This contradicts Claim \ref{cl:circle} with $I=[0,\tau]$ and $J=[u,u+\tau]$.

\item Suppose now that $f^\infty([u,u'])$ lies in the bounded region delimited by $\gamma$. As above, $u+\tau< u'$, and thus $f^\infty(u+\tau)$ is in the bounded region. Note that $f^\infty(u+\tau)=g^\infty(v+\tau)$ by the periodicity of $f^\infty$ and $g^\infty$. Since $g^\infty$ is injective (as it is a translation of $f^\infty$, which is injective),  $f^\infty(u+\tau)=g^\infty(v+\tau)$ lies {\em in the interior} of the bounded region, and thus, as $\lim_{t\to\pm\infty}\|g^\infty(t)\|=\infty$, $g^\infty$ must enter and leave the bounded region, i.e., $g^\infty$ must cross $\gamma$. This implies that $g^\infty$ is not injective, a contradiction. 
\end{itemize}

This concludes the proof of the Crossing Lemma.
\end{proof}

\section{Open questions and possible extensions}
\label{sec:final}

The main result of this paper (Theorem \ref{thm:main}) gives a deterministic algorithm that, for any instance of the red-blue-yellow matching problem, returns a matching that satisfies exactly one color requirement (say red) and almost exactly another color requirement (i.e., missing at most one blue edge), and has large size (i.e., a large number of yellow edges), provided that the associated LP is feasible. This leaves a number of questions open. 

First, the associated LP tells us something about the matchings that satisfy exactly both color constraints: if it has optimal value $\alpha^*$, then this is an upper bound on the size of any such matching, and if instead the LP
is not feasible, then no such matching exists. However, solving this LP does not say anything about the existence of matchings satisfying the red constraint exactly and missing one blue edge. This drawback lies at the heart of the difficulty of solving similar problems (including the original red-blue matching problem) via linear programming approaches, which focus on matchings that are ``close'' to the optimal fractional solution and ignore further, potentially better solutions. We stress that problems such as deciding whether two matchings are at distance two in the matching polytope are NP-complete even for bipartite graphs, see, for instance, \cite{cardinal2025inapproximability}. We believe that the difficulty in ``navigating'' the matching polytope is a fundamental barrier that deterministic approaches would have to overcome in order to solve the red-blue matching problem.

Second, one can wonder whether the lower bound on the size of the matching obtained by our algorithm can be improved. Given that the bounds in some of the intermediate results (Theorem \ref{thm:cycle_paths_choice}, Lemma \ref{lem:general}) are best possible, improving Theorem \ref{thm:main} would require either applying these results in a radically different way, or, we suspect, completely new techniques. In particular, instead of studying pairs of matchings in the face containing the optimal fractional solution as in the proof of Theorem \ref{thm:main}, it would be interesting to directly consider the union of all such matchings: in the most interesting case, the instance obtained is a 3-regular graph that is uniquely 3-edge colorable. What can we say about large, colored matchings in those graphs? This leads us to the simplest non-trivial version of the red-blue matching problem: does it admit a deterministic algorithm if we restrict the input graph to be 3-regular, or even uniquely 3-edge-colorable? 

 Finally, it is natural to try to extend our results to the setting of $h$-colored graphs, for a constant $h\geq 4$. By applying the results from \cite{grandoni2014new} in a similar way as described at the end of Section \ref{sec:intro}, one can get a matching that is not too far away from the optimal solution in terms of size and color requirements; however, the error grows roughly as $h^2$. By a refining similar to that done in this paper, one might be able to obtain an analogue of Theorem \ref{thm:main} where, after solving a single LP, one gets a matching that is reasonably close to the optimum and almost satisfies all color requirements. To this end,  
  we formulate the following conjecture, that can be seen as a generalization of Theorem \ref{thm:cycle_choice}.  To a set of edges $M$ that is colored with (at most) $h$ colors, we associate a vector $p_M\in \Z_+^h$ whose $i$-th coordinate is the number of edges of color $i$ in $M$. Notice that this slightly differs from the notation used in the rest of the paper: before we had $h=3$ but only considered two coordinates (red and blue). However, it is easy to see that, for $h=3$, the following statement is implied by Theorem \ref{thm:cycle_choice}:

  \begin{conj}\label{conj}
     Let $G$ be a cycle of length $2\ell$ colored with $h$ colors, and denote by $M_0,M_1$ the perfect matchings of even and odd edges, respectively. Let $k\in \Z^h$ be a point on the segment between $p_{M_0}$ and $p_{M_1}$. Then, there is matching $M$ of size at least $\ell-h+2$ such that $p_M\leq k$ (component-wise) and $\|p_M-k\|_1\le h-2$.
 \end{conj} 

If the conjecture is true, it could be conceivable to prove a close analogue of Theorem \ref{thm:main} for a general constant number of colors.

 We recall that for the budgeted version of the problem, where color requirements are replaced by (a constant number of) general linear inequalities, polynomial-time approximation schemes are known (\cite{chekuri2011multi, grandoni2014new}). In particular, \cite{grandoni2014new} uses a geometrical existence result from Stromquist and Woodal \cite{stromquist1985sets}, coming from the Ham Sandwich Theorem, that seems to serve a similar role as our Crossing Lemma (Lemma \ref{lem:crossing}), and could help in our setting as well. 
 As a first step in this direction, we observe that the following holds:

\begin{lemma}\label{lem:sequences}\footnote{For $h=2$, this statement appears to be similar in spirit to the Cycle Lemma \cite{dershowitz1990cycle}.}
    Let $S$ be a cyclic sequence of colors chosen from a set of $d\geq 2$ colors, and let $k\in \Z^{d}$ be a point in the segment between the origin and $p_S$. Then $S$ contains a set $S'$ that is the union of $d-1$ intervals and satisfies $p_{S'} = k$.
\end{lemma}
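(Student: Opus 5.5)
The plan is to recognise this as a discrete necklace-splitting statement and derive it from the Stromquist--Woodall theorem mentioned just above. That theorem says the following. Given $d-1$ nonatomic probability measures $\mu_1,\dots,\mu_{d-1}$ on the circle $\bb S^1$ and $\lambda\in[0,1]$, there is a set $K\subseteq\bb S^1$ with three properties: it is a union of at most $d-1$ arcs, $\mu_i(K)=\lambda$ for every $i$, and its complement is also a union of at most $d-1$ arcs.

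First we encode the sequence continuously. Write $S=(c_0,\dots,c_{n-1})$ cyclically and identify $\bb S^1$ with $[0,n)$ modulo $n$, so that position $j$ corresponds to the unit arc $[j,j+1)$. For each color $i$, let $\mu_i$ be the uniform measure on the union of the arcs of the positions of color $i$. Then $\mu_i(K)$ is the fractional number of color-$i$ positions covered by $K$.

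Since $k=\lambda p_S$ for some $\lambda\in[0,1]$, a set $K$ with $\mu_i(K)=\lambda\,(p_S)_i=k_i$ for all $i$ captures exactly the right mass of every color. The cases $\lambda\in\{0,1\}$ are trivial. If some color is absent from $S$, we simply drop it, which only lowers the number of intervals needed. We apply Stromquist--Woodall only to the first $d-1$ colors, normalised to probability measures. Color $d$ then comes for free: the total length of $K$ is not controlled directly, but we can instead apply the theorem to the $d-1$ measures $\mu_1,\dots,\mu_{d-2},\mu_{\mathrm{len}}$. Here $\mu_{\mathrm{len}}$ is normalised Lebesgue measure, which is valid since $|K|=\lambda n$ and all colors but the last are matched. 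So the last color also gets $\lambda(p_S)_d$.

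The main step, and the expected obstacle, is rounding. We want to turn $K$, a union of $d-1$ arcs with fractional endpoints and integral total mass per color, into a set of whole positions forming $d-1$ intervals with the same color counts. The plan is a sliding argument on endpoints. Each arc endpoint lying strictly inside a unit arc $[j,j+1)$ contributes a fractional part of color $c_j$. For each color $i$ the fractional contributions sum to an integer, because $\mu_i(K)=k_i$ is integral after scaling.

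We then move the endpoints one at a time, keeping all color masses invariant. Pick two fractional endpoints lying in cells of the same color, and shift them simultaneously in the directions that trade mass: one arc grows by $\epsilon$ of color $i$ while another shrinks by $\epsilon$ of color $i$. We continue until one of them hits an integer point. Each move reduces the number of fractional endpoints, and endpoints of the same color can always be paired because the fractional parts of each color sum to an integer.

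Care is needed so that arcs never merge badly or cross. If two arcs merge, the number of intervals only decreases, which is fine. If an arc shrinks to empty, the same holds. A single remaining fractional endpoint of color $i$ cannot occur, since its fractional part would be non-integral. This linear-programming-style vertex argument is where most of the checking lies.

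If the rounding proves messy, the fallback is a direct discrete proof by induction on $d$, mimicking the paper's own approach. For $d=2$, slide a window of length $|k|_1$ around the cycle: its red count changes by at most $1$ per step and averages to $k_1$, so some window hits $k_1$ exactly. This is the discrete intermediate value argument underlying the Cycle Lemma. For the inductive step, we would use a Borsuk--Ulam-type degree argument on the positions of the interval endpoints. This is the same topological input that Stromquist--Woodall uses, so I expect the continuous route plus rounding to be the cleaner one.
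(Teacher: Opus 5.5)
There is a genuine gap where you invoke Stromquist--Woodall. You state the theorem with $d-1$ measures and at most $d-1$ arcs, and then apply it to $\mu_1,\dots,\mu_{d-2},\mu_{\mathrm{len}}$. The resulting $K$ has the right mass for colors $1,\dots,d-2$ and total length $\lambda n$. That only yields $\mu_{d-1}(K)+\mu_d(K)=\lambda(n_{d-1}+n_d)$, so colors $d-1$ and $d$ are not matched individually. Your sentence ``all colors but the last are matched'' is therefore false.

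The problem is structural, not a matter of choosing the measures better. Any $d-1$ measures give only $d-1$ linear constraints, one short of fixing all $d$ color counts. Your version of the theorem applied to $d$ measures would give $d$ arcs, one too many.

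The missing ingredient is the full strength of the theorem as stated in the paper (Theorem~\ref{thm:measures}): $d$ non-atomic probability measures on a circle admit a common set of measure $\lambda$ that is a union of at most $d-1$ arcs. The paper applies this directly to $\mu_i/n_i$ for all $d$ colors. Total length is then automatically $\lambda n$, so the Lebesgue-measure detour is unnecessary. With that correction, your reduction coincides with the paper's.

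Your rounding step is sound and is essentially the paper's, written more carefully. An endpoint of $K$ strictly inside a cell makes that cell partially covered. Integrality of $k_i$ then forces a second partially covered cell of the same color. Sliding one endpoint in each, trading mass at equal rates, stops when an endpoint becomes integral, two arcs merge, or an arc vanishes. Each outcome lowers the number of fractional endpoints and never increases the number of arcs.

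The paper's one-line version discards all other partial cells of color $i$ and completes $I_j$. Read literally, that preserves the count only when the fractional parts of color $i$ sum to exactly $1$, so your pairwise sliding is the cleaner way to write it.

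The fallback does not close the gap. The $d=2$ sliding-window argument is correct, but the inductive step is only gestured at.
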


To prove the above result, we use the following theorem.  (We call {\em circle} a nondegenerate closed interval with its endpoint identified. Below, it is implicit that all measures on a circle $C$ are defined at least on the Borel subsets of $C$, with respect to its standard topology. Also, recall that a probability measure $\mu$ is \emph{non-atomic} if $\mu(\{x\})=0$ for any $x$.)

\begin{thm}[Stromquist and Woodal \cite{stromquist1985sets}]\label{thm:measures}
    Let $d\geq 2$ and let $\mu_1,\dots,\mu_d$, be non-atomic probability measures on a circle $C$. For each $\lambda \in [0, 1]$, there exists a subset $K\subseteq C$ that is the union of at most $d - 1$ intervals and satisfies $\mu_i(K) = \lambda$ for each $i\in\{1,\dots,d\}$. 
\end{thm}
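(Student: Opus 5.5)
The plan is to reduce to a nice normalized situation, then apply a topological (degree) argument to a suitable configuration space of arc-unions.

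\emph{Reductions.} First, replacing $K$ by $C\setminus K$ shows that it suffices to treat $\lambda\le 1/2$. Indeed, the complement of a union of at most $d-1$ arcs on a circle is again a union of at most $d-1$ arcs (up to endpoints, which have measure zero by non-atomicity), and it has measure $1-\lambda$ under every $\mu_i$.

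Second, let $\nu=\frac1d\sum_i\mu_i$. This is a non-atomic probability measure, and each $\mu_i$ is absolutely continuous with respect to it. We reparametrize $C$ by the $\nu$-distribution function, collapsing $\nu$-null arcs (which are null for every $\mu_i$). This lets us assume $C=\R/\Z$, $\nu$ is Lebesgue measure, and $\mu_i=f_i\,dx$ with $\sum_i f_i= d$.

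Third, the set of arc-unions with at most $d-1$ components, each recorded by its ordered endpoints, is a compact space on which $K\mapsto(\mu_i(K))_i$ is continuous, because the measures are non-atomic. Hence the set of good $\lambda$ is closed, and good configurations survive weak limits of densities. So I may further approximate each $f_i$ by smooth, strictly positive, generic densities and pass to the limit.

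\emph{Configuration space.} The constraint $\nu(K)=\lambda$ is automatically satisfied once $\mu_1,\dots,\mu_{d-1}$ take value $\lambda$, since $\mu_d=d\nu-\sum_{i<d}\mu_i$. So the goal is to find a zero of
\[
\Phi(K)=\bigl(\mu_1(K)-\lambda,\dots,\mu_{d-1}(K)-\lambda\bigr)\in\R^{d-1}
\]
on the space $X_\lambda$ of unions of at most $d-1$ arcs with $\nu(K)=\lambda$.

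I would parametrize $X_\lambda$ by the $2(d-1)$ endpoints $0\le a_1\le b_1\le\dots\le a_{d-1}\le b_{d-1}$, modulo rotation and subject to $\sum(b_j-a_j)=\lambda$. Degenerate arcs and merging adjacent arcs are allowed. The aim is to identify a $(d-1)$-dimensional sphere-like cycle $\Sigma\subseteq X_\lambda$, together with a ball $D$ bounded by $\Sigma$, such that the restriction $\Phi|_\Sigma$ has nonzero degree (or winding number) around $0$. Then $\Phi$ must vanish somewhere on $D$.

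A natural candidate for $\Sigma$ is the following. Fix the ``gap'' lengths and rotate, together with a family where the $\nu$-mass $\lambda$ is redistributed among the $d-1$ arcs according to a point of the simplex $\Delta^{d-2}$. Together with the rotation parameter, this gives $S^1\times\Delta^{d-2}$, which is collapsed suitably at the boundary, where arcs merge or vanish.

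One sanity check is the case $d=2$. Here $K$ is a single arc of $\nu$-length $\lambda$, and $\Phi(\theta)=\mu_1([\theta,\theta+\lambda])-\lambda$ integrates to $0$ over the rotation $\theta\in C$. Hence $\Phi$ changes sign and has a zero by the intermediate value theorem. This is exactly the degree-one statement in dimension $1$, and I would generalize it by induction on $d$.

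\emph{Inductive step.} Given $d$ measures, use the induction hypothesis on a suitable auxiliary family to obtain a continuous (after the genericity perturbation) curve, or more generally a $1$-cycle, of $(d-2)$-arc solutions for $\mu_1,\dots,\mu_{d-2}$. Then adjoin one extra arc and run the $d=2$ rotation/averaging argument along that family to fix $\mu_{d-1}$.

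The main obstacle is this step: making the averaging argument work for the last measure while keeping the previously fixed measures exact, without exceeding $d-1$ arcs. Concretely, I must show that the solution set for the first $d-2$ measures contains a closed loop over which $\mu_{d-1}(K)-\lambda$ averages to zero, or equivalently that the relevant degree is $\pm1$. I would first attempt this using generic transversality, which makes the zero set of the first $d-2$ coordinates of $\Phi$ a $1$-manifold. Then I would compute its intersection with the rotation orbit by a parity count, in the spirit of Borsuk--Ulam-type proofs of necklace splitting. Finally, the compactness closure from the reductions removes the genericity assumptions.
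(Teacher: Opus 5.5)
The paper does not prove this statement; it quotes it from Stromquist and Woodall and uses it only as a black box in the proof of Lemma~\ref{lem:sequences}. Your proposal therefore has to stand on its own, and as written it does not. The reductions (complementation, reparametrizing by $\nu$, approximation plus compactness) and the case $d=2$ are correct. But they are the routine part. The step you yourself flag as the main obstacle is the entire content of the theorem, and the sketch does not supply it.

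Moreover, the mechanism you outline cannot work as stated.

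\emph{The degree bookkeeping is inconsistent.} $\Phi$ takes values in $\mathbb{R}^{d-1}$, so a winding-number argument needs a $(d-2)$-sphere bounding a $(d-1)$-ball. Your $\Sigma$ (e.g.\ $S^1\times\Delta^{d-2}$ with boundary identifications) is instead a closed $(d-1)$-dimensional object. Any map from a closed $(d-1)$-dimensional (pseudo)manifold to $\mathbb{R}^{d-1}$ has degree $0$ over every point, so it never forces a zero. For instance, for $d=3$ with fixed gaps, both boundary circles of the annulus parametrize single arcs of $\nu$-length $\lambda$, and they glue to a torus.

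\emph{The $d=2$ check is not an instance of this scheme.} There the zero is found on the circle itself, because a real function with zero mean on a connected space must vanish. For $d\ge 3$, the fact that every coordinate of $\Phi$ averages to zero over rotation orbits gives no common zero.

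\emph{The inductive step has the same defect.} Your only quantitative input is $\int_C\mu_i(K+\theta)\,d\theta=\nu(K)$, which holds along rigid rotations in $\nu$-coordinates. Rotating $K$ destroys the equalities $\mu_j(K)=\lambda$ for $j\le d-2$. So along a loop of solutions for $\mu_1,\dots,\mu_{d-2}$, which is not a rotation orbit, nothing makes $\mu_{d-1}(K)-\lambda$ average to zero. Two further problems:
\begin{itemize}
\item The induction hypothesis is a pure existence statement and does not give you such a loop.
\item Adjoining an extra arc changes $\nu(K)$ and the already fixed $\mu_j(K)$, and you give no way to restore them within $d-1$ arcs.
\end{itemize}

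\emph{Borsuk--Ulam does not apply directly.} The Borsuk--Ulam and parity arguments from necklace splitting rely on the involution $K\mapsto C\setminus K$. That is a symmetry of the problem only when $\lambda=1/2$.

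What is missing is a genuine topological invariant that forces all $d-1$ coordinates of $\Phi$ to vanish simultaneously for arbitrary $\lambda$. That is exactly what the cited theorem provides.
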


\begin{proof}[Proof of Lemma \ref{lem:sequences}]
Denote by $n$ the length of $S$ and by $n_i$ the number of elements of $S$ that have color $i$, for each $i\in\{1,\dots,d\}$. In other words, $n_i$ is the $i$-th component of $p_S$. Without loss of generality, we assume $n_i\ge1$ for each $i$. Let $C$ be the circle of length $n$.

We partition $C$ into $n$ sub-intervals $I_1,\dots, I_n$ of unit length, where each $I_j$ is colored as the $j$-th element of $S$. For each color $i\in\{1,\dots, d\}$, let $\mu_i$ be the measure on $C$ such that $\mu_i(X)$ is the length of the portion of $X$ that has color $i$ (i.e., the length of the intersection of $X$ with the union of intervals of color $i$). Each $\mu_i$ is clearly a non-atomic measure, and $\tilde \mu_i:=\mu_i/n_i$ is a non-atomic probability measure. If $\lambda\in [0,1]$ is such that $k=\lambda p_S$,  Theorem \ref{thm:measures} gives a subset $K\subseteq C$ that is the union of at most $d - 1$ intervals and satisfies $\tilde\mu_i(K) = \lambda$ for each $i$. Thus, $\mu_i(K)=n_i\lambda=k_i$. Now, we could just take $S'=\{j\in S: I_j\subseteq K\}$ and we would be done if each $I_j$ were either disjoint from $K$ or contained in $K$. However, this property is easily attainable. Indeed, whenever one of the intervals that form $K$ starts or ends with a proper portion of a sub-interval $I_j$, say of color $i$, the integrality of $k_i$ implies that there are other sub-intervals of color $i$ that are only partially contained in $K$. Removing all these sub-intervals and including all of $I_j$ in $K$ does not increase the number of intervals that form $K$, while preserving the color count. Iterating this, we obtain the aforementioned property and thus our desired set $S'$.
\end{proof}

Lemma \ref{lem:sequences} implies the validity of Conjecture \ref{conj} in the special case where all edges of $M_0$ have colors $1,\dots, h-1$ and all edges of $M_1$ have color $h$. Indeed, applying Lemma \ref{lem:sequences} to the sequence of colors in $M_0$ (with $d=h-1$ and $k$ restricted to the first $d$ coordinates) gives a subset of $M_0$ with the desired colors.
This corresponds to the union of at most $h-2$ paths in $G$, and by considering the symmetric difference between $M_1$ and all these paths we obtain a matching $M$ satisfying Conjecture \ref{conj}.
 
\section*{Acknowledgements.}
The authors are deeply indebted to Michele Conforti for helpful discussions that ignited and nurtured the project. We are also grateful to two anonymous reviewers for their careful reading of this rather technical work. Their useful suggestions improved the presentation of the paper.

This work has been partially funded by the SID project ``Network optimization problems under extra constraints'' under the BIRD 2024 program sponsored by the University of Padua.

The work of M. Di Summa has been partially funded by the European Union - NextGenerationEU
under the National Recovery and Resilience Plan (NRRP), Mission 4 Component 2 Investment
1.1 - Call PRIN 2022 No. 104 of February 2, 2022 of Italian Ministry of University and Research;
Project 2022BMBW2A (subject area: PE - Physical Sciences and Engineering) “Large-scale optimization
for sustainable and resilient energy system'', CUP I53D23002310006. 
\printbibliography

@article{papadimitriou1982complexity,
  title={The complexity of restricted spanning tree problems},
  author={Papadimitriou, Christos H and Yannakakis, Mihalis},
  journal={Journal of the ACM (JACM)},
  volume={29},
  number={2},
  pages={285--309},
  year={1982},
  publisher={ACM New York, NY, USA},
  doi={10.1145/322307.322309}
}

@inproceedings{doron2023eptas,
  title={An EPTAS for Budgeted Matching and Budgeted Matroid Intersection via Representative Sets},
  author={Doron-Arad, Ilan and Kulik, Ariel and Shachnai, Hadas},
  booktitle={International Colloquium on Automata, Languages and Programming},
  pages={49:1-49:16},
  year={2023},
  doi={10.4230/LIPIcs.ICALP.2023.49},
}

@inproceedings{el2023exact,
  title={Exact Matching: Algorithms and Related Problems},
  author={El Maalouly, Nicolas},
  booktitle={40th International Symposium on Theoretical Aspects of Computer Science (STACS 2023)},
  pages={29:1--29:17},
  year={2023},
  doi={10.4230/LIPIcs.STACS.2023.29},
}

@inproceedings{durr2023approximation,
  title={An Approximation Algorithm for the Exact Matching Problem in Bipartite Graphs},
  author={D{\"u}rr, Anita and El Maalouly, Nicolas and Wulf, Lasse},
  booktitle={Approximation, Randomization, and Combinatorial Optimization. Algorithms and Techniques (APPROX/RANDOM 2023)},
  pages={18:1--18:21},
  year={2023},
  doi={10.4230/LIPIcs.APPROX/RANDOM.2023.18}
}

@article{mulmuley1987matching,
  title={Matching is as easy as matrix inversion},
  author={Mulmuley, Ketan and Vazirani, Umesh V and Vazirani, Vijay V},
  journal={Combinatorica},
  volume={7},
  number={1},
  pages={105--113},
  year={1987},
  publisher={Springer},
  doi={10.1007/BF02579206}
}

@article{grandoni2014new,
  title={New approaches to multi-objective optimization},
  author={Grandoni, Fabrizio and Ravi, Ramamoorthi and Singh, Mohit and Zenklusen, Rico},
  journal={Mathematical Programming},
  volume={146},
  pages={525--554},
  year={2014},
  publisher={Springer},
  doi={10.1007/s10107-013-0703-7}
}

@article{edmonds1965maximum,
  title={Maximum matching and a polyhedron with 0,1-vertices},
  author={Edmonds, Jack},
  journal={Journal of Research of the National Bureau of Standards B},
  volume={69},
  number={1-2},
  pages={125--130},
  year={1965},
  doi={10.6028/jres.069B.013}
}

@article{stromquist1985sets,
  title={Sets on which several measures agree},
  author={Stromquist, Walter and Woodall, Douglas R},
  journal={Journal of Mathematical Analysis and Applications},
  volume={108},
  number={1},
  pages={241--248},
  year={1985},
  publisher={Elsevier},
  doi={10.1016/0022-247X(85)90021-6}
}

@inproceedings{kabanets2003derandomizing,
  title={Derandomizing polynomial identity tests means proving circuit lower bounds},
  author={Kabanets, Valentine and Impagliazzo, Russell},
  booktitle={Proceedings of the thirty-fifth annual ACM symposium on Theory of computing},
  pages={355--364},
  year={2003},
  doi={10.1145/780542.780595}
}

@article{gurjar2017exact,
  title={Exact perfect matching in complete graphs},
  author={Gurjar, Rohit and Korwar, Arpita and Messner, Jochen and Thierauf, Thomas},
  journal={ACM Transactions on Computation Theory (TOCT)},
  volume={9},
  number={2},
  pages={1--20},
  year={2017},
  publisher={ACM New York, NY, USA},
  doi={10.1145/3041402}
}

@inproceedings{chekuri2011multi,
  title={Multi-budgeted matchings and matroid intersection via dependent rounding},
  author={Chekuri, Chandra and Vondr{\'a}k, Jan and Zenklusen, Rico},
  booktitle={Proceedings of the twenty-second annual ACM-SIAM symposium on Discrete Algorithms},
  pages={1080--1097},
  year={2011},
  organization={SIAM},
  doi={10.1137/1.9781611973082.82}
}

@inproceedings{grandoni2010approximation,
  title={Approximation schemes for multi-budgeted independence systems},
  author={Grandoni, Fabrizio and Zenklusen, Rico},
  booktitle={European Symposium on Algorithms},
  pages={536--548},
  year={2010},
  organization={Springer},
  doi={10.1007/978-3-642-15775-2_46}
}

@article{yuster2012almost,
  title={Almost exact matchings},
  author={Yuster, Raphael},
  journal={Algorithmica},
  volume={63},
  pages={39--50},
  year={2012},
  publisher={Springer},
  doi={10.1007/978-3-540-74208-1_21}
}

@article{cardinal2025inapproximability,
  title={Inapproximability of shortest paths on perfect matching polytopes},
  author={Cardinal, Jean and Steiner, Raphael},
  journal={Mathematical Programming},
  volume={210},
  number={1},
  pages={147--163},
  year={2025},
  publisher={Springer},
  doi={10.1007/s10107-023-02025-4}
}

@article{zhang1995hamiltonian,
  title={Hamiltonian weights and unique 3-edge-colorings of cubic graphs},
  author={Zhang, Cun-Quan},
  journal={Journal of Graph Theory},
  volume={20},
  number={1},
  pages={91--99},
  year={1995},
  publisher={Wiley Online Library},
  doi={10.1002/jgt.3190200110}
}

@article{dershowitz1990cycle,
  title={The cycle lemma and some applications},
  author={Dershowitz, Nachum and Zaks, Shmuel},
  journal={European Journal of Combinatorics},
  volume={11},
  number={1},
  pages={35--40},
  year={1990},
  publisher={Academic Press},
  doi={10.1016/S0195-6698(13)80053-4}
}

@article{zhang2001strong,
  title={On strong 1-factors and Hamilton weights of cubic graphs},
  author={Zhang, Cun-Quan},
  journal={Discrete Mathematics},
  volume={230},
  number={1-3},
  pages={143--148},
  year={2001},
  publisher={Elsevier},
  doi={10.1016/S0012-365X(00)00077-7}
}

@article{edmonds1965paths,
  title={Paths, trees, and flowers},
  author={Edmonds, Jack},
  journal={Canadian Journal of Mathematics},
  volume={17},
  pages={449--467},
  year={1965},
  publisher={Cambridge University Press},
  doi={10.4153/CJM-1965-045-4}
}

@article{camerini1992random,
  title={Random pseudo-polynomial algorithms for exact matroid problems},
  author={Camerini, Paolo M. and Galbiati, Giulia and Maffioli, Francesco},
  journal={Journal of Algorithms},
  volume={13},
  number={2},
  pages={258--273},
  year={1992},
  publisher={Elsevier},
  doi={10.1016/0196-6774(92)90018-8}
}

@article{nagele2024congruency,
  title={Congruency-constrained TU problems beyond the bimodular case},
  author={N{\"a}gele, Martin and Santiago, Richard and Zenklusen, Rico},
  journal={Mathematics of Operations Research},
  volume={49},
  number={3},
  pages={1303--1348},
  year={2024},
  publisher={INFORMS},
  doi={10.1287/moor.2023.1381}
}

@article{nagele2024advances,
  title={Advances on strictly $\Delta$-modular IPs},
  author={N{\"a}gele, Martin and N{\"o}bel, Christian and Santiago, Richard and Zenklusen, Rico},
  journal={Mathematical Programming},
  pages={1--30},
  year={2024},
  publisher={Springer},
  doi={10.1007/s10107-024-02148-2}
}

@inproceedings{aprile2025integer,
  title={Integer programs with nearly totally unimodular matrices: the cographic case},
  author={Aprile, Manuel and Fiorini, Samuel and Joret, Gwena{\"e}l and Kober, Stefan and Seweryn, Mieha{\l} T and Weltge, Stefan and Yuditsky, Yelena},
  booktitle={Proceedings of the 2025 Annual ACM-SIAM Symposium on Discrete Algorithms (SODA)},
  pages={2301--2312},
  year={2025},
  organization={SIAM}, doi={10.1137/1.9781611978322.76}
}

@book{munkres-topology,
  author = {Munkres, James},
  title = {Topology (Second Edition)},
  publisher = {Pearson New International Editions},
  year = {2014},
  isbn = {978-0131816299}
}

@article{PadbergRao,
author = {Padberg, Manfred W. and Rao, M. R.},
title = {Odd minimum cut-sets and $b$-matchings},
journal = {Mathematics of Operations Research},
volume = {7},
number = {1},
pages = {67-80},
year = {1982},
doi = {10.1287/moor.7.1.67},
}

\end{document}